\title{Extensions of a result of Elekes and R\'onyai}
\author{Ryan Schwartz \and J\'ozsef Solymosi \and Frank de Zeeuw}
\newtheorem{theorem}{Theorem}[section]
\newtheorem{lemma}[theorem]{Lemma}
\newtheorem{corollary}[theorem]{Corollary}
\newtheorem{conjecture}[theorem]{Conjecture}
\begin{document}
\maketitle

\begin{abstract}
Many problems in combinatorial geometry can be formulated in terms of curves or 
surfaces containing many points of a cartesian product.  In 2000, Elekes and 
R\'onyai proved that if the graph of a polynomial contains $cn^2$ points of an 
$n\times n\times n$ cartesian product in $\mathbb{R}^3$, then the polynomial has 
the form $f(x,y)=g(k(x)+l(y))$ or $f(x,y)=g(k(x)l(y))$.  They used this to prove 
a conjecture of Purdy which states that given two lines in $\mathbb{R}^2$ and 
$n$ points on each line, if the number of distinct distances between pairs of 
points, one on each line, is at most $cn$, then the lines are parallel or 
orthogonal.  We extend the Elekes-R\'onyai Theorem to a less symmetric cartesian 
product.  We also extend the Elekes-R\'onyai Theorem to one dimension higher on 
an $n\times n\times n\times n$ cartesian product and an asymmetric cartesian 
product.  We give a proof of a variation of Purdy's conjecture with fewer points 
on one of the lines.  We finish with a lower bound for our main result in one 
dimension higher with asymmetric cartesian product,
showing that it is near-optimal.  
\end{abstract}

\section{Introduction} \label{sec:intro}

\subsection{Background}
We are interested in polynomials on finite cartesian products, for instance of 
the form $f(x,y)\in\mathbb{R}[x,y]$ on $A\times B$, with $A,B\subset \mathbb{R}$ 
and $|A| = |B| = n$.
We will focus on the question of 
how small the image $f(A,B)$ can be in terms of $n$.

For two basic examples, $x+y$ and $xy$, the image can be as small as $cn$, 
if $A$ and $B$ are chosen appropriately. 
For $f(x,y) = x+y$ one can take $A = B = [1,n]$ (or any other arithmetic 
progression of length $n$), so that $f(A,B) = A+B =[2,2n]$;
for $f(x,y) = xy$ one can take a geometric progression like
$A = B = \{2^1,2^2, \ldots, 2^n\}$, so that 
$f(A,B) = A\cdot B = \{2^2,2^3,\ldots, 2^{2n}\}$. 
Similar small images can be obtained for polynomials of the form 
$f(x,y) = g(k(x) + l(y))$, for nonconstant polynomials $g,k,l$, 
by taking $A$ so that $k(A) \subset [1,n]$,
 and $B$ so that $l(B) \subset [1,n]$.
A similar idea works for $f(x,y) = g(k(x)\cdot l(y))$.

For convenience, we will formulate the problem slightly differently:
we consider the surface $z = f(x,y)$ in $\mathbb{R}^3$ 
and its intersection with a cartesian product $A\times B \times C$, 
with $|A| = |B| = |C| = n$.
Then the image of $f$ is small if and only if the intersection is large; 
for instance, $z = x+y$ has intersection with $[1,n]^3$ 
of size at least $\frac{1}{4}n^2$.

In 2000, Elekes and R\'onyai \cite{Elek00}
proved the following converse of the above observations.
\begin{theorem}[Elekes-R\'onyai Theorem] \label{thm:ER}
For every $c>0$ and positive integer $d$ there exists $n_0 = n_0(c,d)$ with the 
following property.\\ Let $f(x,y)$ be a polynomial of degree $d$ in 
$\mathbb{R}[x,y]$ such that for an $n>n_0$ the graph $z = f(x,y)$ 
contains $c n^2$ points of $A\times B\times C$, where $A,B,C\subset \mathbb{R}$ 
have size $n$. Then either
$$f(x,y) = g(k(x) + l(y)),~~~\mathrm{or}~~~ f(x,y) = g(k(x)\cdot l(y)),$$
where $g,k,l\in \mathbb{R}[t]$.
\end{theorem}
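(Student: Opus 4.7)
The plan is to reduce the bivariate problem to a one-dimensional statement about algebraic curves meeting a Cartesian product, then combine such information across many slices to recover a global structural statement about $f$.

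By an averaging argument over $B$, I would first locate a subset $B' \subseteq B$ of size $\Omega(n)$ such that for every $b \in B'$ the univariate polynomial $f_b(x) := f(x, b)$, of degree at most $d$, maps $\Omega(n)$ elements of $A$ into $C$. For each ordered pair $(b_1, b_2) \in B' \times B'$, the parametric map $x \mapsto (f_{b_1}(x), f_{b_2}(x))$ sweeps out an irreducible plane curve $\gamma_{b_1,b_2}$ whose degree is bounded in terms of $d$, and by construction this curve contains $\Omega(n)$ points of $C \times C$.

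The heart of the argument is then a structural lemma for algebraic curves: a plane curve of bounded degree that contains $\Omega(n)$ points of an $n \times n$ Cartesian product must, up to a polynomial change of coordinates in each factor, be the graph of an additive or multiplicative affine map. In the present setting this would force the existence of a single univariate polynomial $k$ such that either $k \circ f_{b_1} - k \circ f_{b_2}$ is a constant $c(b_1, b_2)$ (additive case) or $(k \circ f_{b_1})/(k \circ f_{b_2})$ is a constant $c(b_1, b_2)$ (multiplicative case), as polynomials in $x$. Fixing $b_1$ and varying $b_2$ through $B'$, the ``drift'' $c(b_1, \cdot)$ is itself constrained to be a polynomial in $b_2$ of bounded degree, say $c(b_1, b_2) = l(b_2) - l(b_1)$ in the additive case. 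Reassembling across $y \in B$ and using Zariski density yields $k(f(x,y)) = h(x) + l(y)$, hence $f(x,y) = g(h(x) + l(y))$ with $g$ a suitable inverse of $k$; the multiplicative case is handled analogously.

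The main obstacle is the structural lemma for curves. Proving that a bounded-degree plane curve with many grid points must admit an additive or multiplicative parametrization cannot be done by B\'ezout alone; it likely requires a Szemer\'edi--Trotter style incidence bound combined with a Ritt-type classification of polynomial decompositions in order to rule out higher-genus curves or other algebraic configurations that could accidentally carry many grid points. A secondary difficulty is bookkeeping in the final reassembly, where one must ensure that the exceptional subsets of $A \times B$ on which the pairwise relations fail do not accumulate across the many pairs $(b_1, b_2)$ used to recover the global structure.
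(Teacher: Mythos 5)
Your setup (slicing $f$ along $B$, averaging to get $\Omega(n)$ rich slices $f_b$, and pairing slices into curves $\gamma_{b_1,b_2}(x)=(f_{b_1}(x),f_{b_2}(x))$ that are $\Omega(n)$-rich on $C\times C$) matches the Elekes--R\'onyai argument outlined in Section~\ref{subsec:outline}. But the step you call the heart of the proof --- a structural lemma saying that a \emph{single} bounded-degree plane curve containing $\Omega(n)$ points of an $n\times n$ Cartesian product must be additively or multiplicatively parametrized --- is false, and no incidence bound or Ritt-type theorem will rescue it: given \emph{any} irreducible curve of degree $d$, pick $n$ points $(a_i,c_i)$ on it and set $A=\{a_i\}$, $C=\{c_i\}$; the curve is then $n$-rich on $A\times C$ with no structure whatsoever. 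Richness of one curve on one product carries no information. The actual mechanism is collective: there are $\Omega(n^2)$ curves $\gamma_{ij}$, all rich on the \emph{same} product $C\times C$, while the Pach--Sharir bound (Lemma~\ref{lem:curve}) caps the number of \emph{distinct} bounded-degree curves that rich at $O(n)$. Hence many $\gamma_{ij}$ must coincide as point sets, and the Reparametrization Lemma~\ref{lem:repar} converts each coincidence into decompositions $\widehat{f}_i=p\circ\varphi_i$, $\widehat{f}_j=q\circ\varphi_i$ --- where one must first have stripped the common inner function $k$ (using the $2^d$ bound of Lemma~\ref{lem:decomp} plus the Graph Lemma) precisely so that the $\varphi_i$ are forced to be \emph{affine}. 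The dichotomy additive/multiplicative then comes from a further step you omit entirely: the affine maps $\varphi_i$ are themselves $cn$-rich lines on an $n\times n$ product, and the Line Lemma~\ref{lem:line} forces $cn$ of them to be all parallel (additive case) or all concurrent (multiplicative case).

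A secondary but still fatal problem is that your structural conclusion has the composition on the wrong side. You posit a polynomial $k$ with $k\circ f_{b_1}-k\circ f_{b_2}$ constant and then set $g=k^{-1}$; but a nonlinear polynomial has no polynomial inverse, so $f=g(h(x)+l(y))$ would not be of the required form. Test your claim on $f(x,y)=(x+y)^2$, which \emph{does} have the Elekes--R\'onyai form: no polynomial $k$ makes $k((x+b_1)^2)-k((x+b_2)^2)$ constant. The correct structure is pre-composition between the inner and outer functions, $f_i=p(a_ik(x)+b_i)$, i.e.\ the affine map sits in the middle, and the final algebraic reassembly (comparing coefficient expansions in $k(x)^m$ as in Lemmas~\ref{parallelcase} and~\ref{concurrentcase}) recovers $l(y)$ from the slopes $a_i$ or intercepts $b_i$ without ever inverting a polynomial.
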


In fact, they proved that the same is true for rational functions,
if one allows a third special form $f(x,y) = g((k(x)+l(y))/(1-k(x)l(y)))$.
Elekes and Szab\'o \cite{ElSz, Elek09} were able to extend this theorem to 
implicit surfaces $F(x,y,z) = 0$, and also showed
that the surface need only contain
$n^{2-\gamma}$ points of the cartesian product for the conclusion to hold,
for some absolute 'gap' $\gamma>0$.

Elekes and R\'onyai used their result to prove a famous conjecture of Purdy.
It says that given two lines in $\mathbb{R}^2$ and $n$ points on each 
line, if the number of distinct distances between pairs of points, one on each 
line, is $cn$ for some $c>0$, then the lines are parallel or orthogonal.  
Elekes \cite{Elek99} also proved a 'gap version', 
only requiring the number of distances to be less than $cn^{5/4}$.
For details and a variation of Purdy's conjecture, using our results below,
see Section~\ref{subsec:purdy}.

See \cite{Elek02, Mato02, Mato11} for more detail and some related problems.

\subsection{Results}
In this paper we prove a number of extensions of Theorem~\ref{thm:ER}.
We extend the result to one dimension higher, 
to asymmetric cartesian products, and to both at the same time.
The proofs are based on the proof of Theorem~\ref{thm:ER} by Elekes and R\'onyai.

First we consider a less symmetric cartesian product.

\begin{theorem} \label{thm:ER2}
For every $c>0$ and positive integer $d$ there exist $n_0 = n_0(c,d)$ 
and $\tilde{c} = \tilde{c}(c,d)$ with the following property.\\
 Let $f(x,y)$ be a polynomial of degree 
$d$ in $\mathbb{R}[x,y]$ such that for an $n>n_0$ the graph $z = f(x,y)$ 
contains $c n^{11/6}$ points of $A\times B\times C$, 
where $A,B,C\subset \mathbb{R}$ and $|A| = n, |B| = \tilde{c}n^{5/6}$, and $|C| = n$.
Then either
$$f(x,y) = g(k(x) + l(y)),~~~\mathrm{or}~~~ f(x,y) = g(k(x)\cdot l(y)),$$
where $g,k,l\in \mathbb{R}[t]$.
\end{theorem}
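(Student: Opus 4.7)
The plan is to adapt the Elekes-R\'onyai proof of Theorem~\ref{thm:ER} to the asymmetric setting, with the exponents $11/6$ and $5/6$ calibrated so that a Cauchy-Schwarz step boosts the incidence count past a Szemer\'edi-Trotter threshold. All unspecified constants below depend only on $c$ and $d$; the free parameter $\tilde c$ will be chosen large enough to absorb them.

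First, a double count. For each $b \in B$, set $s_b = |\{a \in A : f(a,b) \in C\}|$, so $\sum_b s_b \geq c n^{11/6}$. Cauchy-Schwarz gives
\[
\sum_b s_b^2 \;\geq\; \frac{(c n^{11/6})^2}{|B|} \;=\; \frac{c^2}{\tilde c}\, n^{17/6},
\]
and the left-hand side counts triples $(a, a', b) \in A \times A \times B$ with $f(a, b), f(a', b) \in C$. Pigeonholing over pairs then yields $\Omega(n^2)$ \emph{rich} pairs $(a, a')$, each accompanied by $\Omega(n^{5/6})$ values of $b$ for which $(f(a, b), f(a', b)) \in C \times C$.

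Second, to each rich pair associate the planar algebraic curve $\Xi_{a, a'} = \{(f(a, y), f(a', y)) : y \in \mathbb{R}\}$, of degree $O(d)$, which by construction contains $\Omega(n^{5/6})$ points of $C \times C$. Aggregating gives $\Omega(n^{17/6})$ incidences between $\Omega(n^2)$ such curves and the $n^2$ points of $C \times C$. The Szemer\'edi-Trotter theorem for bounded-degree algebraic curves caps this incidence count at $O(n^{8/3})$; the gap $n^{17/6}/n^{8/3} = n^{1/6}$ is the crucial slack. It forces either many of the $\Xi_{a, a'}$ to coincide, or most of them to be of a restricted algebraic form (lines in the additive case, M\"obius-type curves in the multiplicative case), paralleling the Elekes-R\'onyai dichotomy.

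Translating back to $f$, the resulting structure of $\Xi_{a, a'}$ produces polynomial identities in $\mathbb{R}[y]$ of the form $f(a', y) = \alpha(a, a') f(a, y) + \beta(a, a')$, or the corresponding multiplicative analog, valid on a polynomially dense set of pairs $(a, a')$. Polynomial interpolation over $A$, using that all degrees are bounded by $d$ and $|A| = n$ is large, extends these relations globally, which by the Elekes-R\'onyai classification forces $f(x, y) = g(k(x) + l(y))$ or $f(x, y) = g(k(x) l(y))$. The main technical obstacle is the second step: the Szemer\'edi-Trotter argument and the subsequent curve-coincidence analysis have to be carried out with constants that ultimately depend only on $c$ and $d$, and $\tilde c$ must be chosen large enough that every pigeonhole threshold in the chain survives simultaneously.
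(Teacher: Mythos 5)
Your opening double count is fine, and the broad template (fibre the surface, form pair-curves, apply an incidence bound, extract coinciding curves) is indeed the Elekes--R\'onyai skeleton the paper follows. But your instantiation has two genuine gaps. First, you pair up elements of $A$ and form curves $\Xi_{a,a'}$ parametrized by $y$; the paper instead pairs up the $|B|=\tilde c n^{5/6}$ fibres $f_i(x)=f(x,b_i)$ and forms curves parametrized by $x$, which are $\Omega(n)$-rich on $C\times C$ so that the Curve Lemma~\ref{lem:curve} applies verbatim and forces $\Omega(n^{2\alpha-1})$ of the $n^{2\alpha}$ curves to coincide. Your curves are only $n^{5/6}$-rich, so Pach--Sharir permits up to $\sim n^{3/2}$ distinct ones and all the subsequent bookkeeping changes; more importantly, you never remove common inner functions (the Graph Lemma~\ref{lem:graph} plus Lemma~\ref{lem:decomp} step producing the $\widehat f_i$), and without that step the Reparametrization Lemma~\ref{lem:repar} applied to coinciding curves yields $\widehat f_i=p\circ\varphi_i$ with $\varphi_i$ possibly \emph{nonlinear}, so no lines ever appear. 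Relatedly, coincidence of $\Xi_{a,a'}$ with $\Xi_{a'',a'''}$ does not produce the identity $f(a',y)=\alpha f(a,y)+\beta$ you claim; that would require $\Xi_{a,a'}$ itself to be a line, which nothing in the incidence argument forces.

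Second, the endgame is missing, and it is exactly where the exponents $11/6$ and $5/6$ come from. The paper ends up with $c_4n^{2\alpha-1}$ reparametrization lines $u=\varphi_i(t)=a_it+b_i$ that are $c_1n$-rich on an essentially $n\times n$ grid, and the new ingredient is the Generalized Line Lemma~\ref{lem:genLine}: with $\beta=0$ it caps at $C_{GLL}n^{2/3}$ the number of such lines having no $d+1$ parallel or concurrent, so one sets $2\alpha-1=2/3$, i.e.\ $\alpha=5/6$, and chooses $\tilde c$ so that $c_4>C_{GLL}$; the parallel and concurrent alternatives are then converted into the additive and multiplicative forms by the algebraic Lemmas~\ref{parallelcase} and~\ref{concurrentcase}. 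Your $n^{1/6}$ of slack over the Pach--Sharir bound is not the mechanism that selects these exponents, and the ``Elekes--R\'onyai classification'' you invoke as a black box is precisely the portion that must be re-derived here. A smaller but telling symptom: your Cauchy--Schwarz constant is $c^2/\tilde c$, which \emph{deteriorates} as $\tilde c$ grows, whereas the theorem needs $\tilde c$ large for the line-lemma threshold; the parameter is doing the opposite job in your argument from the one it must do.
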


Using a recent result of Amirkhanyan, Bush, Croot and Pryby \cite{Amir11}
regarding a conjecture of Solymosi 
about the number of lines in general position that can be rich on a cartesian product
(see Section \ref{subsec:linelemmas}),
we get the following theorem.

\begin{theorem} \label{thm:ERbest}
For every $c>0$ and positive integer $d$ there exists $n_0 = n_0(c,d)$ with the 
following property.\\  
Let $f(x,y)$ be a polynomial of degree $d$ in 
$\mathbb{R}[x,y]$ such that for an $n>n_0$ the graph $z = f(x,y)$ 
contains $c n^{3/2 + \varepsilon}$ points of $A\times B\times C$, where 
$A,B,C\subset \mathbb{R}$ and $|A| = n, |B| = n^{1/2 + \varepsilon}$ 
with $\varepsilon > 0$, and $|C| = n$.
Then either
$$f(x,y) = g(k(x) + l(y)),~~~\mathrm{or}~~~ f(x,y) = g(k(x)\cdot l(y)),$$
where $g,k,l\in \mathbb{R}[t]$.
\end{theorem}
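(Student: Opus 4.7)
The plan is to follow the architecture of the proof of Theorem~\ref{thm:ER2}, substituting the improved bound on rich lines in general position (from Amirkhanyan, Bush, Croot and Pryby) at the final counting step. Because the only structural change from Theorem~\ref{thm:ER2} to Theorem~\ref{thm:ERbest} is the exponent, the geometric and algebraic setup should carry over almost verbatim, and the numerical improvement is driven entirely by the stronger line-counting lemma.

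First I would pigeonhole on the fibres of the projection $A\times B\times C\to B$. Since the graph of $f$ contains $cn^{3/2+\varepsilon}$ points while $|B|=n^{1/2+\varepsilon}$, averaging yields a subset $B'\subset B$ of size $\Omega(n^{1/2+\varepsilon})$ such that for every $b\in B'$ the univariate polynomial $f_b(x):=f(x,b)$ sends $\Omega(n)$ elements of $A$ into $C$. Next, assume for contradiction that $f$ is neither of the form $g(k(x)+l(y))$ nor of the form $g(k(x)\cdot l(y))$. Following Elekes and R\'onyai, for a fixed rich $b_0\in B'$ and each $b\in B'$ I would encode the relation between $f_{b_0}$ and $f_b$ as an auxiliary algebraic curve, then apply the linearization step from \cite{Elek00} (used also in the proof of Theorem~\ref{thm:ER2}) to turn most of these curves into lines in a suitable plane, each containing $\Omega(n)$ points of a cartesian product of two sets of size $O(n)$. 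The non-special form of $f$ is exactly what is needed to guarantee that these $\Omega(n^{1/2+\varepsilon})$ lines lie in general position in the sense required by the line-counting lemma.

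The final step is to invoke the Amirkhanyan-Bush-Croot-Pryby result from Section~\ref{subsec:linelemmas}: in an $O(n)\times O(n)$ cartesian product, only few lines in general position can each carry at least $n^{1/2+\varepsilon}$ points, and the threshold $|B|=n^{1/2+\varepsilon}$ is calibrated so that this ceiling is strictly below the number of lines produced in the previous step. The resulting contradiction forces $f$ into one of the two special forms.

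The main obstacle is not the initial pigeonholing or the final line count, but the intermediate reduction: one must verify that the auxiliary lines arising from $\Omega(n^{1/2+\varepsilon})$ different values of $b$ genuinely satisfy the general-position hypothesis of the new line-counting bound. This requires the algebraic rigidity arguments of Elekes and R\'onyai, ensuring that almost no three of these lines are concurrent and that their slopes are sufficiently varied; it is exactly the content of the analogous step in the proof of Theorem~\ref{thm:ER2}, and the improvement recorded here lies purely in feeding its output into the sharper line-counting lemma rather than the weaker one used previously.
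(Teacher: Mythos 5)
Your high-level strategy --- rerun the machinery of Theorem~\ref{thm:ER2} and swap the Generalized Line Lemma for the Amirkhanyan--Bush--Croot--Pryby input at the final step --- is exactly what the paper does in Section~\ref{subsec:ER3Croot}. But your description of the intermediate reduction has a genuine gap. You propose to fix one rich $b_0$ and form one auxiliary curve per $b\in B'$, i.e.\ about $n^{1/2+\varepsilon}$ curves, and then ``linearize.'' The linearization is not automatic: it comes from applying the Curve Lemma~\ref{lem:curve} to force many of the curves $\gamma_{ij}=(\widehat{f}_i,\widehat{f}_j)$ to \emph{coincide} as point sets, after which the Reparametrization Lemma~\ref{lem:repar} (together with the prior removal of common inner functions) produces a common outer polynomial $p$ and linear maps $\varphi_i$. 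The Curve Lemma only caps the number of \emph{distinct} rich curves at $C_{CL}n$; with your $n^{1/2+\varepsilon}\ll n$ curves no coincidence is forced and the step dies. One must take all $\sim n^{1+2\varepsilon}$ pairs $(b_i,b_j)$, which forces $\sim n^{2\varepsilon}$ curves to coincide and ultimately yields $\sim n^{2\varepsilon}$ rich lines $\varphi_i$, not $n^{1/2+\varepsilon}$ of them. This is also where the exponent $1/2+\varepsilon$ actually comes from: one needs $|B|^2/n\to\infty$, i.e.\ $|B|\gg n^{1/2}$. It is not calibrated against the ABCP ceiling, which is $n^{\varepsilon'}$ for every $\varepsilon'>0$ and hence is beaten by any polynomially growing family of lines.

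Two smaller points. The lines must be $cn$-rich (they are, inheriting richness from the $f_i$) for Theorem~\ref{thm:croot} to be applicable; your phrase ``carry at least $n^{1/2+\varepsilon}$ points'' would not suffice. And strict general position cannot be verified: all one can rule out is $d+1$ parallel or $d+1$ concurrent lines, which is why the paper routes through Corollary~\ref{cor:croot} (extracting a general-position subfamily of size about $\sqrt{m/(p+q)}$) rather than through Theorem~\ref{thm:croot} directly. Relatedly, the paper does not argue by contradiction: the line lemma forces $d+1$ parallel or $d+1$ concurrent lines, and Lemmas~\ref{parallelcase} and~\ref{concurrentcase} then \emph{construct} the additive and multiplicative forms from those two cases; your ``non-special implies general position'' claim is the contrapositive of those lemmas and carries all of their algebraic content.
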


We also extend the Elekes-R\'onyai Theorem to cartesian products of one dimension higher,
i.e. to polynomials with one more variable.
\begin{theorem} \label{thm:main}
For every $c>0$ and positive integer $d$ there exists $n_0 = n_0(c,d)$ with the 
following property.\\  
Let $f(x,y,z)$ be a polynomial of degree $d$ in $\mathbb{R}[x,y,z]$ such that 
for an $n>n_0$ the graph $w = f(x,y,z)$ 
contains $c n^3$ points of $A\times B\times C\times D$, 
where $A,B,C,D\subset \mathbb{R}$ have size $n$.
Then either
$$f(x,y,z) = g(k(x) + l(y) + m(z)),~~~\mathrm{or}~~~ f(x,y,z) = g(k(x)\cdot 
l(y)\cdot m(z)),$$
where $g,k,l,m\in \mathbb{R}[t]$.
\end{theorem}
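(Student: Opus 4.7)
The plan is to reduce to the two-variable Elekes--R\'onyai theorem by slicing. For each $z_0\in C$, the polynomial $f(x,y,z_0)$ has degree at most $d$ in $(x,y)$, and the points on the graph $w=f(x,y,z)$ whose third coordinate equals $z_0$ all lie on the surface $w=f(x,y,z_0)$ over $A\times B\times D$. Distributing $cn^3$ points over $n$ values of $z_0$, a pigeonhole argument yields a subset $C'\subset C$ with $|C'|\ge (c/2)n$ such that each such slice contains at least $(c/2)n^2$ points of $A\times B\times D$. Once $n$ is large enough Theorem~\ref{thm:ER} applies, and a second pigeonhole gives a subset $C''\subset C'$ of size $\Omega(n)$ on which the decomposition is of the same kind. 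By symmetry I may assume that the additive form occurs, so that
\[
f(x,y,z_0)\,=\,g_{z_0}\!\bigl(k_{z_0}(x)+l_{z_0}(y)\bigr)\quad\text{for every }z_0\in C'',
\]
with all degrees bounded in terms of $d$.

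Next I would repeat the slicing in the $x$-direction, obtaining a large set $A''\subset A$ on which $f(x_0,y,z)$ has an Elekes--R\'onyai decomposition in $(y,z)$. For any $(x_0,z_0)\in A''\times C''$ the two decompositions must evaluate to the same polynomial in $y$, namely
\[
f(x_0,y,z_0)=g_{z_0}\bigl(k_{z_0}(x_0)+l_{z_0}(y)\bigr).
\]
This compatibility forces the second decomposition to be additive as well, giving $f(x_0,y,z)=G_{x_0}(L(y)+M_{x_0}(z))$, and after normalization the inner polynomial $L(y)$ may be taken to be a single $l(y)$ independent of $x_0$ and of $z_0$. A third slicing in the $y$-direction yields analogous universal polynomials $k(x)$ and $m(z)$, after which the outer polynomial $g$ is recovered by substitution into the identity $f(x,y,z_0)=g_{z_0}(k(x)+l(y))$ for a single $z_0$.

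The main obstacle is the non-uniqueness of the Elekes--R\'onyai decomposition: $k_{z_0},l_{z_0},g_{z_0}$ are only determined up to affine reparametrization, and a priori this reparametrization could depend arbitrarily on $z_0$. Producing a single triple $(k,l,m)$ requires showing that, with a suitable normalization, the polynomials $k_{z_0}$ and $l_{z_0}$ are in fact independent of $z_0$, so that all the $z_0$-dependence is absorbed into $g_{z_0}$ via $g_{z_0}(t)=g\bigl(t+m(z_0)\bigr)$ for some $m$. I would handle this through a degree-and-pigeonhole argument: any polynomial identity of degree bounded in terms of $d$ that holds for more than $O(d)$ values of $z_0$ must be an identity in $z$, which propagates the additive structure to the third variable and yields $f(x,y,z)=g(k(x)+l(y)+m(z))$ on a Zariski-dense subset and hence identically. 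The multiplicative case is treated in parallel, with sums replaced by products throughout.
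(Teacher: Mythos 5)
Your opening reduction (pigeonhole over $z_0\in C$ to get $\Omega(n)$ slices $f(\cdot,\cdot,z_0)$ that are $\Omega(n^2)$-rich on $A\times B\times D$, then apply Theorem~\ref{thm:ER} and pigeonhole on the type) is sound, and it is a genuinely different starting point from the paper, which instead fibers over the $n^2$ pairs $(y_i,z_j)\in B\times C$ and reruns the entire Elekes--R\'onyai machinery on the one-variable fibers $f_{ij}(x)=f(x,y_i,z_j)$. But essentially all of the difficulty of the theorem lives in your gluing step, and the claims you make there are false as stated. The decisive counterexample is the paper's own $f(x,y,z)=x+(y-z)^2$ from Section~\ref{subsec:parab}: every $z$-slice, $x$-slice and $y$-slice of this $f$ admits an additive Elekes--R\'onyai decomposition (e.g.\ $f(x,y,z_0)=k(x)+l_{z_0}(y)$ with $l_{z_0}(y)=(y-z_0)^2$), so all the slice-level compatibility conditions you invoke are satisfied, and any bounded-degree polynomial identity valid for many $z_0$ does extend to all $z$ --- yet $f$ is \emph{not} of the form $g(k(x)+l(y)+m(z))$. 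Moreover, for this $f$ the inner polynomial $l_{z_0}(y)=(y-z_0)^2$ genuinely depends on $z_0$ (the difference $l_{z_0}-l_{z_1}$ is linear in $y$, not constant), so your central assertion that ``with a suitable normalization, $k_{z_0}$ and $l_{z_0}$ are independent of $z_0$'' cannot be a consequence of slice-wise decomposability plus degree counting; it must use the fact that all slices are rich on the \emph{same} product, which your argument never does after the initial invocation of Theorem~\ref{thm:ER}. Separately, the claim that ``compatibility forces the second decomposition to be additive as well'' is unsupported: for $f=x+yz$ the $z$-slices are additive and the $x$-slices are multiplicative, with no algebraic incompatibility at the level of individual slices, so any argument here must again appeal to the common richness (indeed to something of sum--product flavor), not to compatibility of polynomial identities.

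For comparison, the paper avoids this trap by never decomposing two-variable slices: it extracts a single outer polynomial $p$ and single inner polynomial $k$ uniformly across $\Omega(n^2)$ fibers $f_{ij}(x)$ (via the Graph, Curve, Reparametrization and Line Lemmas), uses the two-variable Vanishing Lemma~\ref{lem:vanish} to obtain the genuinely two-variable form $f(x,y,z)=p(k(x)+r(y,z))$, then obtains the symmetric form $f(x,y,z)=P(K(y)+R(x,z))$ and splits $r(y,z)=l(y)+m(z)$ by a partial-derivative computation. That last step is precisely where $x+(y-z)^2$ is excluded: it can be written as $p(k(x)+r(y,z))$ but not as $P(K(y)+R(x,z))$. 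To repair your approach you would need to replace the purely algebraic gluing with an argument of this strength, at which point you are essentially reconstructing the paper's proof.
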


We can also prove a higher-dimensional version with a less symmetric cartesian 
product.
\begin{theorem} \label{thm:ER3}
For every $c>0$ and positive integer $d$ there exists $n_0 = n_0(c,d)$ with the 
following property.\\
Let $f(x,y,z)$ be a polynomial of degree $d$ in 
$\mathbb{R}[x,y,z]$ such that for an $n>n_0$ the graph $w = f(x,y,z)$ 
contains $c n^{8/3+2\varepsilon}$ points of $A\times B\times C\times D$, 
where $A,B,C,D\subset \mathbb{R}$ and $|A| = n$, 
$|B|=|C| = n^{5/6+\varepsilon}$ with $\varepsilon > 0$, 
and $|D| = n$.
Then either
$$f(x,y,z) = g(k(x) + l(y) + m(z)),~~~\mathrm{or}~~~ f(x,y,z) = g(k(x)\cdot 
l(y)\cdot m(z)),$$
where $g,k,l,m\in \mathbb{R}[t]$.
\end{theorem}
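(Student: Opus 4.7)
The plan is to reduce to the three-variable asymmetric case in Theorem~\ref{thm:ERbest} by slicing along a ``small'' coordinate, and then to glue the resulting slice-wise decompositions into a global form for $f$. For each $y_0\in B$, let $N(y_0)$ be the number of points $(x,z,w)\in A\times C\times D$ with $w=f(x,y_0,z)$. Summing, $\sum_{y_0} N(y_0) = cn^{8/3+2\varepsilon}$, and since $|B|=n^{5/6+\varepsilon}$, a dyadic pigeonhole step yields a subset $B'\subseteq B$ of size $\Omega(n^{5/6+\varepsilon})$ on which $N(y_0)\ge (c/2)\,n^{11/6+\varepsilon}$.

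Setting $\varepsilon' := \varepsilon + 1/3 > 0$, the sizes align with Theorem~\ref{thm:ERbest}: $|C| = n^{5/6+\varepsilon} = n^{1/2+\varepsilon'}$ and $(c/2)\,n^{11/6+\varepsilon} = (c/2)\,n^{3/2+\varepsilon'}$. Thus, for $n$ large, Theorem~\ref{thm:ERbest} applies to the two-variable polynomial $f_{y_0}(x,z) := f(x,y_0,z)$ on $A\times C\times D$, giving for each $y_0\in B'$ a decomposition
\[
f(x,y_0,z) = g_{y_0}(k_{y_0}(x)+l_{y_0}(z)) \quad\text{or}\quad f(x,y_0,z) = g_{y_0}(k_{y_0}(x)\cdot l_{y_0}(z)),
\]
with $g_{y_0},k_{y_0},l_{y_0}\in\mathbb{R}[t]$ of degree at most $d$. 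Exactly the same argument, now slicing in $z_0\in C$, produces an analogous decomposition of $f(x,y,z_0)$ for every $z_0$ in a large subset $C'\subseteq C$.

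To glue, I would first use pigeonhole on the two cases to restrict to (say) the additive case for all $y_0$ in a large subset $B''\subseteq B'$, and then invoke the rigidity coming from $\deg f\le d$: any polynomial identity in $y$ that holds at $\Omega(n^{5/6+\varepsilon})$ values must, for $n$ large, hold identically. Following the strategy used for Theorem~\ref{thm:main}, one checks the vanishing of the cross partial-derivative expression that characterizes additive separability in the Elekes--R\'onyai proof~\cite{Elek00}, combines this with the analogous identity from the $z_0$-slices, and deduces the global form $f(x,y,z)=g(k(x)+l(y)+m(z))$ (the multiplicative case is analogous). The main obstacle is precisely this gluing step: the type (additive vs.\ multiplicative) and the inner polynomials $g_{y_0},k_{y_0},l_{y_0}$ can in principle vary with $y_0$, and the hard part is to show---via pigeonhole on the type together with a degree-counting identity argument on the inner factors---that the slice-wise decompositions are compatible on a large enough subset and lift to the claimed three-variable form.
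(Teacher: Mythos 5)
Your reduction to Theorem~\ref{thm:ERbest} is numerically correct (the pigeonhole gives $\Omega(n^{5/6+\varepsilon})$ slices $y_0$ that are $(c/2)n^{11/6+\varepsilon}$-rich, and with $\varepsilon'=\varepsilon+1/3$ these match the hypotheses of that theorem), but the gluing step you defer is not a technicality --- it is the entire difficulty, and the strategy you sketch for it cannot work. The information you retain after slicing is only that $f(x,y_0,z)$ has a two-variable additive or multiplicative form for many $y_0$ (and similarly for the $z_0$-slices), together with the vanishing of the associated cross-derivative expressions. The paper's own example in Section~\ref{subsec:parab}, $f(x,y,z)=x+(y-z)^2$, satisfies all of this: every $y_0$-slice equals $x+(y_0-z)^2=g(k(x)+l_{y_0}(z))$ with $g(t)=t$ and $k(x)=x$, every $z_0$-slice likewise, and $q_f=r_f=s_f=0$; yet $f$ has neither global three-variable form. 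So no argument that uses only the slice-wise conclusions and derivative identities can finish the proof. The obstruction is visible in the example: the inner polynomial $l_{y_0}(z)=(y_0-z)^2$ depends on $y_0$ in a way that is not an additive shift, and nothing in your setup rules this out. The extra leverage must come from the sizes $|B|=|C|=n^{5/6+\varepsilon}$ being large, but in your argument those sizes are entirely consumed in making the individual slices rich enough for Theorem~\ref{thm:ERbest}, leaving nothing for the gluing.

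The paper avoids this by never decoupling the $y$ and $z$ directions: it works with all $n^{2\alpha}$ one-variable fibres $f_{ij}(x)=f(x,y_i,z_j)$ simultaneously, uses Lemma~\ref{lem:decomp} and the Graph Lemma to extract a single common inner polynomial $k$, then the Curve Lemma and the Reparametrization Lemma to extract a single common outer polynomial $p$ with $f_{ij}=p(a_{ij}k(x)+b_{ij})$, and finally the Generalized Line Lemma~\ref{lem:genLine} (this is where $\alpha=5/6+\varepsilon$ is forced) together with the two-variable Vanishing Lemma~\ref{lem:vanish} to show that $2dn^{\alpha}$ of the lines $t\mapsto a_{ij}t+b_{ij}$ are parallel or concurrent and that the parameters assemble into a single polynomial $r(y,z)$ with $f=p(k(x)+r(y,z))$ or $f=P(K(x)\cdot R(y,z))$. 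Only after that global statement is in hand does a derivative computation split $r(y,z)$ into $l(y)+m(z)$ (resp.\ $L(y)M(z)$). To salvage a slicing argument you would need to prove a uniformity statement across slices --- same outer polynomial, same $k(x)$, and controlled dependence of the remaining parameters on $y_0$ --- and that is exactly the content you have labelled as ``the main obstacle'' without supplying it.
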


And using the abovementioned result of Amirkhanyan et al.~we get the following:
\begin{theorem} \label{thm:ER4}
Given $c>0$ and $d$ a positive integer there exists $n_0 = n_0(c,d)$ with the 
following property. \\  Let $f(x,y,z)$ be a polynomial of degree $d$ in 
$\mathbb{R}[x,y,z]$ such that for an $n>n_0$ the graph $w = f(x,y,z)$ contains 
$c n^{2 + 2\varepsilon}$ points of $A\times B\times C\times D$, where 
$A,B,C,D\subset \mathbb{R}$ and $|A| = n$,
$|B|=|C| = n^{1/2 + \varepsilon}$ with $\varepsilon > 0$, 
and $|D| = n$.
Then either
$$f(x,y,z) = g(k(x) + l(y) + m(z)),
~~~\mathrm{or}~~~ f(x,y,z) = g(k(x)\cdot l(y)\cdot m(z)),$$
where $g,k,l,m\in \mathbb{R}[t]$.
\end{theorem}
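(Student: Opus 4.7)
The plan is to parallel the proof of Theorem~\ref{thm:main}: slice the four-dimensional cartesian product by fixing one coordinate in order to reduce to a two-variable problem on each slice, apply a sharp two-variable Elekes--R\'onyai estimate to those slices, and then assemble the resulting pointwise decompositions into a single global one. The only new ingredient over Theorem~\ref{thm:main} is that the slice theorem to be invoked is Theorem~\ref{thm:ERbest} rather than Theorem~\ref{thm:ER}.

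\medskip

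\textbf{Slicing and applying Theorem~\ref{thm:ERbest}.}
For each $c_0\in C$, let $N(c_0)$ count the quadruples $(a,b,c_0,d)\in A\times B\times\{c_0\}\times D$ lying on the graph $w=f(x,y,z)$. Since $\sum_{c_0\in C}N(c_0)\ge cn^{2+2\varepsilon}$ and $|C|=n^{1/2+\varepsilon}$, a standard pigeonhole step produces at least $(c/2)n^{1/2+\varepsilon}$ values of $c_0$ with $N(c_0)\ge (c/2)n^{3/2+\varepsilon}$. For every such rich $c_0$, the polynomial $f_{c_0}(x,y):=f(x,y,c_0)\in\mathbb{R}[x,y]$ has degree at most $d$ and its graph contains at least $(c/2)n^{3/2+\varepsilon}$ points of $A\times B\times D$ with $|A|=n$, $|B|=n^{1/2+\varepsilon}$, $|D|=n$. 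This matches the hypothesis of Theorem~\ref{thm:ERbest}, so for $n$ large I conclude, for each rich $c_0$,
$$f_{c_0}(x,y)=g_{c_0}\!\bigl(k_{c_0}(x)+l_{c_0}(y)\bigr)\quad\text{or}\quad f_{c_0}(x,y)=g_{c_0}\!\bigl(k_{c_0}(x)\,l_{c_0}(y)\bigr),$$
with $g_{c_0},k_{c_0},l_{c_0}\in\mathbb{R}[t]$ of degree bounded in terms of $d$.

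\medskip

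\textbf{Globalising the decomposition.}
After pigeonholing over the two possible forms I may assume the additive form holds for at least $(c/4)n^{1/2+\varepsilon}$ rich $c_0$; the multiplicative case is treated identically. From here I would reuse the second half of the proof of Theorem~\ref{thm:main}: after a suitable normalisation the decomposition $f_{c_0}(x,y)=g_{c_0}(k_{c_0}(x)+l_{c_0}(y))$ is essentially unique, the degrees of $g_{c_0},k_{c_0},l_{c_0}$ are uniformly bounded, and so their coefficients can be viewed as algebraic functions of $c_0$. Restricting to one irreducible branch on which infinitely many rich $c_0$ realise the same decomposition would upgrade the pointwise identity to the polynomial identity $f(x,y,z)=G(K(x)+L(y)+M(z))$ for some $G,K,L,M\in\mathbb{R}[t]$.

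\medskip

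\textbf{Main obstacle.}
The combinatorial slicing is routine; the technical heart lies in the globalisation. The hard part will be verifying that the $c_0$-dependence of $k_{c_0}$ and $l_{c_0}$ collapses after normalisation, and that whatever $c_0$-dependence remains sits entirely inside $G$ as an additive summand $m(z)$. This is exactly the algebraic difficulty already handled in the proof of Theorem~\ref{thm:main}, so no new ideas are needed there; the only fresh ingredient specific to Theorem~\ref{thm:ER4} is the asymmetric two-variable bound of Theorem~\ref{thm:ERbest}.
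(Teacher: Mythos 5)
Your slicing step is fine as far as it goes: the pigeonhole over $c_0\in C$ and the application of Theorem~\ref{thm:ERbest} to each rich slice $f_{c_0}(x,y)=f(x,y,c_0)$ on $A\times B\times D$ are correct. But the globalisation step, which you defer to ``the second half of the proof of Theorem~\ref{thm:main}'', is a genuine gap, for two reasons. First, the paper's proof of Theorem~\ref{thm:main} (and of Theorem~\ref{thm:ER4}) does not slice off one variable and invoke a two-variable theorem; it fixes \emph{both} $y_i\in B$ and $z_j\in C$ to obtain $n^{2\alpha}$ one-variable fibres $f_{ij}(x)=f(x,y_i,z_j)$, runs the curve and reparametrization machinery jointly over all pairs $(i,j)$, and only then applies Corollary~\ref{cor:croot} to the resulting family of lines $u=a_{ij}t+b_{ij}$ together with the two-variable Vanishing Lemma~\ref{lem:vanish} in $(y,z)$. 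There is no step anywhere in that proof that takes a family of two-variable decompositions indexed by $c_0$ and merges them into a three-variable one, so there is nothing for you to reuse.

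Second, and more seriously, the merging you describe cannot be done by algebra alone. The paper's own example in Section~\ref{subsec:parab}, $f(x,y,z)=x+(y-z)^2$, has the property that \emph{every} slice $f_{c_0}(x,y)=x+(y-c_0)^2$ is exactly of the additive form $g(k(x)+l_{c_0}(y))$ with $g=\mathrm{id}$, $k(x)=x$, $l_{c_0}(y)=(y-c_0)^2$; the coefficients of $l_{c_0}$ vary polynomially in $c_0$, so all rich $c_0$ lie on a single irreducible branch in your sense, yet $f$ has neither the additive nor the multiplicative three-variable form. Hence ``slice-wise decomposable with algebraically varying data'' does not imply the conclusion: the $c_0$-dependence of $l_{c_0}$ need not collapse into an additive summand $m(z)$ inside $G$. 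The theorem is nevertheless true because such an $f$ cannot satisfy the richness hypothesis when $|B|=|C|=n^{1/2+\varepsilon}$ --- but detecting that requires exploiting the joint combinatorics of all slices \emph{after} the decomposition step (this is exactly what the line-counting via Corollary~\ref{cor:croot} over all $n^{2\alpha}$ pairs $(y_i,z_j)$ accomplishes in the paper), whereas your outline never returns to the combinatorial hypothesis once the initial pigeonhole is done. As written, your argument would also ``prove'' the false statement that slice-wise additivity implies global additivity.
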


In Section~\ref{subsec:parab} we will give an example of a polynomial $f(x,y,z)$ 
whose graph contains $cn^2$ points of $A\times B\times C\times D$, 
where $|A|=|D|=n$ and $|B|=|C|=c'n^{1/2}$,
but $f$ does not have the required additive or multiplicative form of Theorem~\ref{thm:ER4}.
This shows that Theorem~\ref{thm:ER4} is near-optimal.

Note that as for the two-variable case, 
the converses of Theorems~\ref{thm:main}--\ref{thm:ER4} all hold 
for some appropriate cartesian products.  
Specifically, if $f(x,y,z) = g(k(x)+l(y)+m(z))$,
one can choose $A$, $B$, and $C$ so that $k(x)$, $l(y)$, and $m(z)$
have values in the same arithmetic progression.
A similar construction works for the product case.

Theorems~\ref{thm:ER}, \ref{thm:ER2}, \ref{thm:main} and \ref{thm:ER3} would all hold 
if we consider functions over $\mathbb{C}$ instead of $\mathbb{R}$,
but we will restrict ourselves to $\mathbb{R}$ here.
The proofs could be extended to $|B|\neq |C|$, at some cost to the exponents.
It also seems possible to generalize our proofs to polynomials with even more variables.

In Section \ref{subsec:outline} we give a short outline of the proof of the 
Elekes-R\'onyai Theorem,
which provides a template for our subsequent proofs.
Section \ref{sec:prelim} contains a number of concepts and results 
required throughout our proofs.
In Section~\ref{sec:erLess} we give the proofs of Theorems \ref{thm:ER2} and \ref{thm:ERbest}, 
while Section \ref{sec:er4d} contains the proofs of Theorems \ref{thm:main}, 
\ref{thm:ER3}, and \ref{thm:ER4}.  
In Section~\ref{sec:apps} we give an extension of the conjecture of Purdy 
and an example showing the near-optimality of Theorem~\ref{thm:ER4}.

\subsection{Outline of proofs}\label{subsec:outline}
The following is an outline of the proof 
that Elekes and Ronyai gave in \cite{Elek00} of Theorem \ref{thm:ER}.
Our theorems are obtained by adjusting this proof to three-variable $f$, 
and by using improved Line Lemmas (see Section \ref{subsec:linelemmas}) 
to get the asymmetric versions.
All functions below are polynomials, 
and we repeatedly recycle the positive constant $c$.

We split up the surface $z = f(x,y)$ into the $n$ curves
$$z = f_i(x) = f(x, b_i),$$
for each of the $b_i\in B$. We wish to decompose a $cn$-sized subset of the $f_i$ as
$$f_i(x) = (p\circ \varphi_i\circ k)(x) = p(a_i k(x) + b_i),$$
where $\varphi_i$ is linear and $p$ and $k$ are independent of $i$.

Then the $cn$ lines $u = \varphi_i(t) = a_i t + b_i$ will also be 
$cn$-rich on an $n\times n$ cartesian product.
For such sets of lines we have various lemmas (\ref{lem:line}--\ref{cor:croot})
that say that a $cn$-sized subset of them must be all parallel or all concurrent.

Given $cn$ such decompositions with the lines $\varphi_i$ all parallel,
we can write $f(x,y) = p(a k(x) + b_i)$, 
and then conclude by an algebraic argument that there exists an $l(y)$ 
such that $f(x,y) = p(k(x) + l(y))$.
If $cn$ of the lines are concurrent,
we can write $f(x,y) = p(a_i\cdot (k(x)+b))$, 
and then conclude that $f(x,y) = p(k(x)\cdot l(y))$.

To find the above decomposition of the $f_i$, 
we first remove their common inner functions (polynomials $\mu$ such that $f_i = 
\lambda_i\circ\mu$) up to linear equivalence.
We can do this because the number of decompositions up to linear equivalence
of a polynomial of degree $d$ depends only on $d$ (Lemma \ref{lem:decomp}), 
so for large enough $n$ there must be a $cn$-sized subset of the $f_i$ 
that all have the same inner function of maximal degree.
This maximal inner function will be the $k$ above, 
and we remove it by writing $f_i = \widehat{f}_i\circ k$.
Then we have a subset of $\widehat{f}_i$ with the property that
if $\widehat{f}_i = \mu_i\circ \lambda$ and $\widehat{f}_j = \mu_j\circ \lambda$, 
then $\lambda$ must be linear.

Now we combine pairs $\widehat{f}_i, \widehat{f}_j$ into new curves
$$\gamma_{ij}(t) = (\widehat{f}_i(t), \widehat{f}_j(t)).$$
We observe that these $\gamma_{ij}$ are $cn$-rich 
on an $n\times n$ cartesian product, 
and that we have $cn^2$ of them.
But by a theorem of Pach and Sharir (Lemma \ref{lem:curve}), 
such a set of rich curves can have size at most $c'n$. 

This is not a contradiction: 
many of these $\gamma_{ij}$ may coincide as sets in $\mathbb{R}^2$.
But if for instance $\gamma_{ij}$ and $\gamma_{kl}$ coincide,
then by some algebra (Lemma \ref{lem:repar}) they must be reparametrizations 
of the same curve $(p(t), q(t))$, which means that we can write
$$\begin{array}{cc}
\widehat{f}_i = p\circ \varphi, & \widehat{f}_j = q\circ \varphi,\\
\widehat{f}_k = p\circ \phi, & \widehat{f}_l = q\circ \phi.
\end{array}$$
Since we already removed all nonlinear common inner polynomials,
$\varphi$ must be linear.
If we have enough such decompositions, 
we can ensure that they all have the form $f_i = p\circ \varphi_i$
for the same $p$.
This give us the desired decompositions
$$f_i = \widehat{f}_i\circ  k = p\circ \varphi_i\circ k.$$

\section{Preliminaries} \label{sec:prelim}

\subsection{Discrete geometry}
We will make frequent use of the following well-known theorem, 
first proved in \cite{Szem83}.
We say that a line (or any other curve) is \textit{$k$-rich} on a point set $\mathcal{P}$ 
if it contains at least $k$ points of $\mathcal{P}$.


\begin{theorem}[Szemer\'edi-Trotter Theorem] \label{thm:szem}
There exists a constant $C_{ST}>0$ such that 
given a set $\mathcal{P}$ of $n$ points in $\mathbb{R}^2$, 
the number of lines $k$-rich on $\mathcal{P}$
is at most $C_{ST}\cdot(n^2/k^3 + n/k)$.
\end{theorem}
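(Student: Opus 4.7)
The plan is to follow Székely's short proof via the Crossing Lemma, and deduce the $k$-rich statement from the standard incidence bound for an arbitrary collection of lines.

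\textbf{Step 1: Incidence bound.} I would first prove that for $n$ points $\mathcal{P}$ and $m$ lines $\mathcal{L}$ in the plane, the number of incidences satisfies $I(\mathcal{P},\mathcal{L}) \leq C_0(n^{2/3}m^{2/3} + n + m)$. Form a graph $G$ with vertex set $\mathcal{P}$ by placing, for each line $\ell \in \mathcal{L}$ containing $t_\ell \geq 2$ points of $\mathcal{P}$, an edge between each consecutive pair of incident points along $\ell$. Line $\ell$ contributes $t_\ell - 1$ edges, so $|E(G)| \geq I(\mathcal{P},\mathcal{L}) - m$. Using the line segments, $G$ is drawn in the plane with at most $\binom{m}{2}$ crossings, since two distinct lines meet in at most one point. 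If $|E(G)| \geq 4|\mathcal{P}|$, the Crossing Lemma gives $\mathrm{cr}(G) \geq |E(G)|^3/(64|\mathcal{P}|^2)$, so $(I - m)^3 \leq 32\, n^2 m^2$, yielding $I \leq c(n^{2/3}m^{2/3} + m)$. In the remaining case $|E(G)| < 4|\mathcal{P}|$ we obtain $I \leq 4n + m$ directly.

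\textbf{Step 2: Deduce the rich-lines bound.} Let $\mathcal{L}_k$ denote the set of lines that are $k$-rich on $\mathcal{P}$ and set $\ell = |\mathcal{L}_k|$. Each such line contributes at least $k$ incidences, so
\[
k\ell \;\leq\; I(\mathcal{P},\mathcal{L}_k) \;\leq\; C_0\bigl(n^{2/3}\ell^{2/3} + n + \ell\bigr).
\]
Split into three cases according to which term on the right dominates. If $n^{2/3}\ell^{2/3}$ dominates, then $k\ell \leq 3C_0 n^{2/3}\ell^{2/3}$, giving $\ell \leq 27 C_0^3 \, n^2/k^3$. If the $n$-term dominates, then $k\ell \leq 3C_0 n$, giving $\ell \leq 3C_0 \, n/k$. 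If the $\ell$-term dominates, then $k \leq 3C_0$, and combining with the elementary pair-counting bound $\binom{k}{2}\ell \leq \binom{n}{2}$ gives $\ell \leq n^2/k^2$, which in this regime is absorbed into a constant multiple of $n^2/k^3$. Summing the three cases yields the desired $\ell \leq C_{ST}(n^2/k^3 + n/k)$.

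\textbf{Main obstacle.} No genuinely difficult step appears: the argument is essentially two standard ingredients (Crossing Lemma plus pair-counting) glued together. The only care required is in the bookkeeping — checking that the hypothesis $|E(G)| \geq 4|\mathcal{P}|$ of the Crossing Lemma can be assumed without loss of generality, and tracking constants through the three-case split so that the $n/k$ summand emerges with a correct coefficient. Alternatively, one could substitute the original cell-decomposition proof of Szemerédi and Trotter, but Székely's route is shorter and conceptually cleaner.
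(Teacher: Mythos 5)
The paper does not prove this statement at all: it is quoted as a known background result, attributed to Szemer\'edi and Trotter \cite{Szem83}, so there is no internal proof to compare against. Your argument is the standard Sz\'ekely crossing-number proof of the incidence bound followed by the usual deduction of the $k$-rich version, and it is correct. Both steps check out: the drawing graph is simple (two points span at most one line, so no multiple edges), the dichotomy on $|E(G)|\geq 4n$ versus $|E(G)|<4n$ is handled, and the three-way case split in Step 2 correctly produces the $n^2/k^3$ and $n/k$ terms, with the bounded-$k$ regime absorbed via pair counting ($\binom{k}{2}\ell\leq\binom{n}{2}$ requires $k\geq 2$, which is the standard implicit hypothesis here --- for $k=1$ the statement is vacuous since infinitely many lines pass through one point). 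The only difference from the cited source is historical: the original 1983 proof used a cell decomposition, whereas your route through the Crossing Lemma is the now-standard shorter argument; either is acceptable, and nothing in the paper depends on which proof is used.
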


This theorem was generalized by Pach and Sharir \cite{Pach90, Pach98}
to continuous real planar curves without self-intersection.
We will use the following corollary for algebraic curves, 
which follows quite easily since algebraic curves (of bounded degree) can be 
split up into a small number (depending on the degree) of curves without 
self-intersection.
For details see Elekes and R\'onyai \cite{Elek00}.

\begin{lemma}[Curve Lemma] \label{lem:curve}
Given $c>0$ and a positive integer $d$, 
there exist $C_{CL} = C_{CL}(c,d)$ and $n_0 = n_0(c,d)$ such that the following holds.\\
Given a set of $m$ irreducible real algebraic curves of degree $\leq d$ 
that are $cn$-rich on $A$, where $A\subset \mathbb{R}^2$ and $|A|\leq n^2$,
then for all $n>n_0$ we have
\[m\leq C_{CL}\cdot n.\] 
\end{lemma}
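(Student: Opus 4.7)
The plan is to derive the bound from the Pach--Sharir incidence theorem for simple planar curves, after first reducing the irreducible algebraic curves to a family of simple arcs. Recall that Pach and Sharir proved that for any family of $M$ simple (non-self-intersecting) continuous curves in $\mathbb{R}^2$ such that any two meet in at most $s$ points, the number of curves $k$-rich on a set of $N$ points is at most $C_s(N^2/k^3 + N/k)$. The two tasks are therefore (i) to split each algebraic curve into a controlled number of simple arcs, and (ii) to plug the resulting parameters into the Pach--Sharir bound.

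For step (i), I would use standard facts about real algebraic curves of bounded degree. An irreducible curve $C = \{F=0\}$ with $\deg F \leq d$ has at most $d(d-1)$ singular points by Bezout applied to $F$ and its partial derivatives, and has $O(d^2)$ real connected components by Harnack's theorem. Removing small neighborhoods of the singular points therefore decomposes $C$ into at most some constant $c'_d$ connected simple arcs. Applying this to all $m$ input curves produces a family of $M \leq c'_d \cdot m$ simple arcs, and Bezout guarantees that any two arcs arising from distinct irreducible curves of degree $\leq d$ meet in at most $d^2$ points. Thus the family of arcs satisfies the hypotheses of Pach--Sharir with $s = d^2$.

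For step (ii), I would transfer the richness. Since each original curve is $cn$-rich on $A$ and is covered by at most $c'_d$ simple arcs, at least one of these arcs is $(cn/c'_d)$-rich on $A$. Hence among our $M$ arcs there are at least $m$ that are $k$-rich for $k := cn/c'_d$. With $N := |A| \leq n^2$, the Pach--Sharir bound gives
\[
m \leq C_s\!\left(\frac{N^2}{k^3} + \frac{N}{k}\right) \leq C_s\!\left(\frac{(c'_d)^3\, n^4}{c^3\, n^3} + \frac{c'_d\, n^2}{c\, n}\right) = C_{CL}(c,d)\cdot n,
\]
valid for all $n$ larger than some $n_0(c,d)$ (so that the asymptotic regime of the Pach--Sharir bound applies).

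The only part with any real subtlety is the arc decomposition in step (i): one must check that the number of simple pieces per curve is bounded in terms of $d$ alone and that the pairwise intersection bound is preserved. Both follow from elementary Bezout and Harnack-type considerations for real algebraic curves of bounded degree, after which the Curve Lemma reduces to a direct substitution into the Pach--Sharir bound.
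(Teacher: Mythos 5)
Your proof follows the same route the paper itself indicates: the paper does not actually prove the Curve Lemma, but states that it ``follows quite easily since algebraic curves (of bounded degree) can be split up into a small number of curves without self-intersection,'' deferring details to Elekes and R\'onyai. Your decomposition into simple arcs via B\'ezout and Harnack, the pigeonholing of richness onto one arc per original curve, and the substitution $N\le n^2$, $k=cn/c'_d$ are exactly that argument, carried out correctly at the level of detail the paper intends.

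One caveat, concerning the step you did \emph{not} flag as subtle. The Pach--Sharir generalization of Szemer\'edi--Trotter that yields the rich-curve bound $C_s(N^2/k^3+N/k)$ requires two hypotheses on the family of simple curves: that any two curves meet in at most $s$ points, \emph{and} that any two points lie on at most $t$ curves of the family (bounded ``multiplicity type,'' or equivalently two degrees of freedom). You verify only the first. For irreducible algebraic curves of degree at least $2$ the second can genuinely fail --- a pencil of conics through two fixed points puts arbitrarily many curves of the family through a common pair of points --- and without it the applicable general Pach--Sharir bound (the ``$\kappa$ degrees of freedom'' version, with $\kappa$ depending on $d$) gives only $m=O(n^{2-1/(\kappa-1)})$ in the regime $k\sim cn$, $N\sim n^2$, which is weaker than $O(n)$ once $\kappa\ge 3$. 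A simple double-count (pairs of curves share at most $d^2$ points, so $\sum_p\binom{t_p}{2}\le d^2\binom{m}{2}$) closes the gap only when $c$ is large compared with $d$, so for small $c$ the multiplicity issue must be confronted. This is precisely the portion of the argument that both you and the paper delegate to the reference; your quoted form of Pach--Sharir is stronger than its standard statement, and that is where the remaining work lies.
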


\subsection{Line lemmas}\label{subsec:linelemmas}

In the proof of Theorem \ref{thm:ER} by Elekes and R\'onyai,
an important ingredient was the following result of 
Elekes \cite{Elek97} about lines containing many points from a cartesian product.

\begin{lemma}[Line Lemma] \label{lem:line}
Suppose $A,B\subset \mathbb{R}$ and $|A| = |B| = n$.  
For all $c_1,c_2>0$ there exists $C_{LL}>0$, independent of $n$, 
such that if $m$ lines in $\mathbb{R}^2$ are $c_1n$-rich on $A\times B$,
with no $c_2n$ of the lines all parallel or all concurrent,
then 
$$m < C_{LL}\cdot n.$$
\end{lemma}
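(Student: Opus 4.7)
The plan is to apply the Szemer\'edi-Trotter theorem (Theorem~\ref{thm:szem}) directly to the point set $A\times B$ and the collection of $c_1 n$-rich lines. The cartesian product has exactly $n^2$ points in $\mathbb{R}^2$, and each of the $m$ lines is $k$-rich with $k=c_1 n$, so Theorem~\ref{thm:szem} applies with $|\mathcal{P}|=n^2$.

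The bound is
\[m \;\leq\; C_{ST}\!\left(\frac{(n^2)^2}{(c_1 n)^3} + \frac{n^2}{c_1 n}\right) \;=\; C_{ST}\!\left(\frac{n}{c_1^3} + \frac{n}{c_1}\right),\]
so the conclusion $m\leq C_{LL}\cdot n$ holds with $C_{LL}:=C_{ST}(c_1^{-3}+c_1^{-1})$, a constant depending only on $c_1$ and in particular independent of $n$.

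There is no real obstacle in the proof itself, since the only step is a direct invocation of Szemer\'edi-Trotter. The conceptual subtlety worth flagging is that this argument does not actually use the hypothesis that no $c_2 n$ of the lines are parallel or concurrent: Theorem~\ref{thm:szem} already bounds the number of $c_1 n$-rich lines on $A\times B$ unconditionally. The parallel/concurrent clause is stated because the lemma is meant to be used in its contrapositive form within the Elekes-R\'onyai framework, where from $\geq C_{LL} n$ rich lines one extracts a $c_2 n$ subfamily that is all parallel or all concurrent. This structural conclusion is what drives the subsequent argument on the decomposition lines $\varphi_i$ described in the outline, and quantitative improvements to it---where $C_{LL}$ can be made small enough to be triggered by the modest number of rich lines that arise in our applications---are precisely what the sharper Line Lemma variants collected in Section~\ref{subsec:linelemmas} provide.
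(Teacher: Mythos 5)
Your Szemer\'edi--Trotter computation is correct, and it does establish the statement exactly as it is quantified: with $C_{LL}=C_{ST}(c_1^{-3}+c_1^{-1})$ the bound $m\le C_{LL}n$ holds unconditionally. But this is not a proof of the lemma in the sense in which it is needed, and the problem is precisely the point you flag and then dismiss. The constant your argument produces \emph{is} the unconditional Szemer\'edi--Trotter ceiling on the number of $c_1n$-rich lines on $n^2$ points, so no family of rich lines can ever satisfy $m\ge C_{LL}n$ with that value of $C_{LL}$, and the contrapositive you describe (``from $\ge C_{LL}n$ rich lines extract a $c_2n$ parallel or concurrent subfamily'') can never fire. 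Where the paper invokes Lemma~\ref{lem:line} (in the proof of Theorem~\ref{thm:main}), one has only $c'n$ distinct rich representatives for a small constant $c'$ and must conclude that $c''n$ of them are parallel or concurrent; this requires a version of the lemma in which $C_{LL}(c_1,c'')\le c'$, which your constant can never achieve, since by the very same Szemer\'edi--Trotter bound $c'$ never exceeds it.

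The genuine content of the lemma --- what Elekes proves in \cite{Elek97} and what the paper reproves in generalized form as Lemma~\ref{lem:genLine}, of which the present statement is the case $\beta=1$ --- is that $C_{LL}$ can be taken of the form $(\text{small constant})/c_2$, i.e.\ strictly below the Szemer\'edi--Trotter threshold; equivalently, every family of at least $Cn$ rich lines contains $c^*n$ mutually parallel or $c^*n$ concurrent members for some $c^*=c^*(c_1,C)>0$. Proving that uses the parallel/concurrent hypothesis essentially and is not a one-line application of Theorem~\ref{thm:szem}: one forms the compositions $\gamma_{ij}=\ell_i\circ\ell_j^{-1}$ and $\Gamma_{ij}=\ell_j^{-1}\circ\ell_i$, shows by a $C_4$-count that many of them are rich on $B\times B$ and $A\times A$, applies Szemer\'edi--Trotter to bound the number of \emph{distinct} such lines, uses Beck's Theorem (Theorem~\ref{thm:beck}) together with the no-large-concurrency hypothesis to control multiple edges, and finds a dense connected component all of whose vertices share a slope, contradicting the no-large-parallel-family hypothesis. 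You should either reproduce that argument or deduce the lemma as the $\beta=1$ specialization of Lemma~\ref{lem:genLine}; the direct Szemer\'edi--Trotter bound, while formally sufficient for the sentence as written, yields a vacuous statement.
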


We prove a generalization that will be crucial in Section~\ref{sec:erLess}.
The proof is at the end of this section, and is modelled on that of Elekes.

\begin{lemma}[Generalized Line Lemma] \label{lem:genLine}
Suppose $A,B\subset \mathbb{R}$ and $|A| = |B| = n$.  
For all $c_1,c_2>0$, and $\beta \ge 0$ there exists $C_{GLL}>0$, 
independent of $n$, such that 
if $m$ lines in $\mathbb{R}^2$ are $c_1n$-rich on $A\times B$, 
with no $c_2n^{\beta}$ concurrent or parallel,
then 
$$m < C_{GLL}\cdot n^{2/3 + \beta/3}.$$
\end{lemma}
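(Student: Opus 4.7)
My approach closely follows Elekes' original proof of the Line Lemma (Lemma~\ref{lem:line}), which combines the Szemer\'edi-Trotter theorem with the no-parallel/concurrent hypothesis via a careful double counting; the generalization requires sharpening this counting when the multiplicity bound is weakened to $T:=c_2n^{\beta}$ instead of $c_2n$. Throughout, let $t(p)$ denote the number of lines of $L$ through the point $p$, so that the hypothesis reads $t(p)\leq T$ for every $p\in\mathbb{R}^2$ and every slope class of $L$ has size at most $T$.

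The first step is to apply the Szemer\'edi-Trotter theorem (Theorem~\ref{thm:szem}) to the $m$ lines of $L$ and the $n^2$ points of $A\times B$; together with the richness lower bound $I(L,A\times B)\geq c_1nm$ this gives
\[c_1 n m \;\leq\; C_{ST}\bigl(n^{4/3}m^{2/3}+n^2+m\bigr),\]
which by itself only yields $m=O(n)$, matching the lemma at $\beta=1$. To improve this for $\beta<1$ I would split the incidences at $A\times B$ according to the multiplicity of the underlying grid point: the incidences at points with $t(p)=1$ total at most $n^2$, so in the nontrivial regime $m\geq 2n/c_1$ at least $c_1nm/2$ of the incidences must lie at the multi-point set $P^\ast:=\{p\in A\times B:t(p)\geq 2\}$.

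Next I would exploit the no-parallel and no-concurrency hypotheses simultaneously. The no-parallel bound partitions $L$ into slope classes of size at most $T$, giving at least $m/T$ distinct slopes and at least $m(m-T)/2$ non-parallel pairs; since each intersection point has multiplicity at most $\binom{T}{2}$, the lines of $L$ determine at least $\sim m^2/T^2$ distinct intersection points in $\mathbb{R}^2$. Applying Szemer\'edi-Trotter once more to the pair $(P^\ast,L)$, and combining with the multiplicity bound $\sum_p t(p)^2\leq T\sum_p t(p)$ and the rich-incidence lower bound above, gives a system of inequalities in $m,T,n,|P^\ast|$ whose solution is the desired bound $m\leq C_{GLL}\cdot n^{2/3+\beta/3}$.

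The main obstacle will be the balancing in the final step: the Szemer\'edi-Trotter term $n^{4/3}m^{2/3}$, the concurrency term $T|P^\ast|\leq Tn^2$, and the richness lower bound must be combined in just the right proportions to produce the exponent $2/3+\beta/3$ rather than weaker interpolations such as $1$ or $1+\beta$ that fall out of each piece individually. The vertical-line case (where slopes are undefined) and the trivial regime $m\leq 2n/c_1$ are routine side cases that can be handled by a brief separate argument.
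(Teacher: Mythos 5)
There is a genuine gap, and it is not merely the ``balancing'' difficulty you flag at the end: the system of inequalities you propose to balance cannot imply the conclusion, because none of its ingredients uses the Cartesian-product structure of $A\times B$ --- only its cardinality $n^2$. To see that this is fatal, take $m=n$ lines in general position and let $\mathcal{P}$ be their $\binom{n}{2}\le n^2$ pairwise intersection points: every line is $(n-1)$-rich on $\mathcal{P}$, no two lines are parallel and no three are concurrent, yet $m=n$ is far larger than $n^{2/3+\beta/3}$ for $\beta<1$. This configuration satisfies every relation you list --- the Szemer\'edi--Trotter bound, the incidence lower bound $I\ge c_1nm$ (up to constants), $|P^\ast|\le n^2$, the multiplicity bound $\sum_p t(p)^2\le T\sum_p t(p)$, and the count of roughly $m^2/T^2$ distinct intersection points (which in any case live in $\mathbb{R}^2$ and need not lie in $P^\ast$, so they cannot be fed into a second Szemer\'edi--Trotter application on $(P^\ast,L)$). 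Hence no algebra performed on the quantities $m,T,n,|P^\ast|$ subject to those constraints can output $m=O(n^{2/3+\beta/3})$. Relatedly, your characterization of Elekes' proof of Lemma~\ref{lem:line} as Szemer\'edi--Trotter plus double counting is inaccurate; his argument already relies essentially on the product structure.

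The missing idea is the composition trick. If $\ell_i$ and $\ell_j$ are both $c_1n$-rich on $A\times B$, then the lines $\gamma_{ij}=\ell_i\circ\ell_j^{-1}$ and $\Gamma_{ij}=\ell_j^{-1}\circ\ell_i$ tend to be $\Omega(n)$-rich on $B\times B$ and $A\times A$ respectively; the paper makes this precise for $\Omega(m^2)$ of the pairs by counting $C_4$'s in a tripartite graph on $A\cup L\cup B$ via Jensen's inequality. Since Szemer\'edi--Trotter allows only $O(n)$ distinct $\Omega(n)$-rich lines on $B\times B$, the $\Omega(m^2)$ derived lines must coincide massively. The paper then forms a bipartite graph whose vertices are the distinct rich $\gamma$'s and $\Gamma$'s (at most $O(n)$ of them) with one edge per pair $(i,j)$; a multiple edge forces four concurrent lines of $L$, so the dual of Beck's theorem yields a simple subgraph with $\Omega(m^2)$ edges, and a dense connected component --- whose vertices all share one slope --- translates back into more than $c_2n^{\beta}$ parallel lines of $L$ unless $m^{4}/n^{2}=O(mn^{\beta})$, i.e.\ $m=O(n^{2/3+\beta/3})$. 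If you want to salvage your write-up, you must begin from this composition step; the direct incidence-counting route is a dead end.
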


A collection of lines in $\mathbb{R}^2$ is said to be in \emph{general position} 
if no two lines are parallel and no three lines are concurrent.
The second author conjectured the following extension of the above result.  For 
details see \cite{Elek02}.
\begin{conjecture}
Suppose $A,B\subset \mathbb{R}$ and $|A|=|B|=n$.  For all $c>0$ there exists
$C_S>0$ such that if $m$ lines in general position are $cn$-rich on $A\times B$ 
then $m<C$.

\end{conjecture}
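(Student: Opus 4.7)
The conjecture asks for a bound \emph{independent of $n$}, which is qualitatively stronger than the $O(n)$ of Lemma~\ref{lem:line} and the $O(n^{2/3})$ of Lemma~\ref{lem:genLine} at $\beta=0$. Since the standard Szemer\'edi--Trotter plus dual counting argument gives only a polynomial bound, my approach would try to squeeze far more out of the Cartesian product structure of $A\times B$, combining incidence geometry with an additive-combinatorial dichotomy.

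\emph{Step 1: Parametrize and translate.} Write each rich line as $\ell_i: y = a_i x + b_i$. Its $cn$-richness on $A\times B$ yields a subset $A_i\subset A$ with $|A_i|\ge cn$ and $a_i A_i + b_i \subset B$; that is, the affine map $\phi_i(t) = a_i t + b_i$ sends a large part of $A$ into $B$. General position forces all $a_i$ distinct and no three graphs meeting at a common point of $A\times B$.

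\emph{Step 2: Count collisions.} For any two lines $\ell_i,\ell_j$, the no-three-concurrent condition implies the system $a_i x + b_i = a_j x' + b_j$ with $(x,x')\in A_i\times A_j$ and the common value in $B$ has at most one solution. Summing over pairs and applying Cauchy--Schwarz to $\bigl|\{(x,x')\in A\times A : a_i x - a_j x' = b_j - b_i\}\bigr|$ bounds $\binom{m}{2}$ in terms of the additive energy $E(A,B)$ and the multiplicative energy of the slope set $\{a_i\}$. If either energy is small, this already forces $m = O(1)$.

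\emph{Step 3: Structural dichotomy.} If instead the energies are large, invoke Balog--Szemer\'edi--Gowers and a Freiman--Ruzsa-type theorem to conclude that $A$ and $B$ (and the slope set) are nearly affine images of small generalized arithmetic progressions. In this structured regime, one checks directly that the family of rich lines cannot remain in general position: two lines mapping essentially the same progression into essentially the same progression are forced to coincide in slope or to line up through a common point, contradicting the general-position hypothesis.

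\textbf{Main obstacle.} The heart of the difficulty, and the reason this is only stated as a conjecture, is precisely the jump from a polynomial bound to a constant. Step 2 by itself will never beat $O(n^{1/2+o(1)})$ without nontrivial additive input, and Step 3 requires transferring Freiman-type structure from the energy statement to a rigidity statement about affine maps in general position — a transfer for which no sufficiently sharp tool is currently known. I expect essentially all the work of a full proof to lie in making this Freiman-to-rigidity passage quantitative enough to rule out an unbounded family of rich lines in general position on $A\times B$.
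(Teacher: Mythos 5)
The statement you have been asked to prove is an open conjecture: the paper states it without proof (it is attributed to the second author, with a pointer to \cite{Elek02} for background), and the strongest partial result it records is Theorem~\ref{thm:croot} of Amirkhanyan, Bush, Croot and Pryby, which only yields $m \le n^{\varepsilon}$ for every $\varepsilon>0$ --- still very far from the constant bound the conjecture demands. So there is no proof in the paper to compare against, and your proposal, by your own admission, is not a proof either: the entire content of the conjecture is concentrated in your ``Freiman-to-rigidity passage,'' which you correctly flag as unavailable. A plan whose decisive step is to invoke a transfer ``for which no sufficiently sharp tool is currently known'' is a research programme, not an argument.

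Beyond that, Step 2 contains a concrete error. The no-three-concurrent hypothesis says nothing about the equation $a_i x + b_i = a_j x' + b_j$ with the common value in $B$: the number of solutions $(x,x')\in A_i\times A_j$ equals the number of elements of $B$ hit by both lines over $A$, which can be as large as $cn$ for a pair of rich lines (concurrency concerns only the single intersection point of $\ell_i$ and $\ell_j$, i.e.\ the diagonal case where the same point of the plane lies on both lines, not pairs of points at equal height). So the ``at most one solution per pair'' bound is false, and the Cauchy--Schwarz step as described does not control $\binom{m}{2}$. Even repaired, you acknowledge this route caps out at a polynomial bound, which is exactly what Lemmas~\ref{lem:line} and~\ref{lem:genLine} already provide. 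The honest conclusion is that the statement remains open, and your write-up should present itself as a discussion of obstacles rather than as a proof attempt.
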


The following result of Amirkhanyan et al.\cite{Amir11}~
is related to the above conjecture.

\begin{theorem} \label{thm:croot}
   For every $\varepsilon>0$ there exists $\delta>0$ such that given 
   $n^{\varepsilon}$ lines in $\mathbb{R}^2$ in general position, they cannot 
   all be $n^{1-\delta}$-rich on $A\times B$, where $|A|=|B|=n$.
\end{theorem}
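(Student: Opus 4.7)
The plan is to combine the Szemer\'edi--Trotter theorem with an additive-combinatorial argument exploiting the general position hypothesis. Parametrize each of the $m = n^{\varepsilon}$ lines as $y = a_i x + b_i$, so that the richness condition $|\ell_i \cap (A \times B)| \geq n^{1-\delta}$ becomes the existence of a subset $A_i \subseteq A$ with $|A_i| \geq n^{1-\delta}$ and $a_i A_i + b_i \subseteq B$.

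First I would observe that Szemer\'edi--Trotter alone is insufficient: the incidences between the $m$ lines and $A \times B$ are at least $m n^{1-\delta}$ and at most $O(m^{2/3} n^{4/3} + n^2)$, which yields only $m = O(n^{1+\delta})$ and is compatible with $m = n^{\varepsilon}$ for every $\varepsilon \leq 1$. So the general position condition must enter structurally, not merely through a raw incidence count.

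Next, I would exploit that the $\binom{m}{2}$ line--line intersection points are all distinct. Pigeonholing over pairs $(i,j)$ shows that many of the sets $A_i$ share a common large subset $A^{\star} \subseteq A$, and for each such pair we get both $a_i A^{\star} + b_i \subseteq B$ and $a_j A^{\star} + b_j \subseteq B$. Subtracting kills the intercepts, giving $(a_i - a_j) A^{\star} \subseteq B - B$. Because no two slopes coincide, the values $a_i - a_j$ are all distinct and nonzero, and iterating over many pairs a Pl\"unnecke--Ruzsa argument forces a large refinement of $A^{\star}$ to have small multiplicative doubling; a parallel analysis on the ratios $(b_i - b_j)/(a_i - a_j)$ yields small additive doubling on the same refinement.

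Finally I would invoke a quantitative sum-product inequality such as Solymosi's $|X+X|^{2}|X \cdot X| \gtrsim |X|^{4}/\log|X|$ applied to the refined subset $X$ of $A^{\star}$. Simultaneous near-trivial additive and multiplicative doubling on a set of size $n^{1-o(1)}$ is ruled out quantitatively, and tracking the exponent losses through the pigeonhole and Pl\"unnecke steps determines an admissible $\delta = \delta(\varepsilon) > 0$. The main obstacle I expect is the middle step: general position is a combinatorial condition that does not directly yield additive structure, so passing from \emph{many rich lines in general position} to \emph{one dense subset of small doubling} likely requires a Balog--Szemer\'edi--Gowers refinement, and controlling the resulting polynomial losses in $\varepsilon$ and $\delta$ is the delicate part of the argument.
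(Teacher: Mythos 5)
First, a point of comparison: the paper does not prove Theorem~\ref{thm:croot} at all. It is quoted verbatim from Amirkhanyan, Bush, Croot and Pryby \cite{Amir11} and used as a black box (via Corollary~\ref{cor:croot}), so there is no in-paper argument to measure your proposal against; you are attempting to reprove a substantial external theorem. Your instinct that the statement is a sum-product-type phenomenon is sound -- viewing each line as an affine map $x\mapsto a_ix+b_i$, richness says these maps nearly preserve $A$, and the known approaches (including the Elekes-style argument the paper does give for Lemma~\ref{lem:genLine}, which composes $\ell_i\circ\ell_j^{-1}$) live in exactly this circle of ideas. But as written your sketch has gaps at its two load-bearing steps.

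The first gap is the pigeonhole to a common core $A^{\star}$. Two subsets of $A$ of size $n^{1-\delta}$ inside a ground set of size $n$ can be disjoint; even on average, $\sum_{i<j}|A_i\cap A_j| = \sum_{a}\binom{d(a)}{2}$ only gives a typical pairwise intersection of order $n^{1-2\delta}$, and a single large subset common to \emph{many} of the $A_i$ simply does not follow from large pairwise intersections (this is the whole difficulty that Balog--Szemer\'edi--Gowers-type arguments exist to address, and you defer it rather than resolve it). The second and more serious gap is the claimed derivation of small multiplicative doubling. The subtraction gives $(a_i-a_j)A^{\star}+(b_i-b_j)\subseteq B-B$, not $(a_i-a_j)A^{\star}\subseteq B-B$, but more importantly this containment carries no information: $|B-B|$ can be as large as $n^2$, while the union of all $O(n^{2\varepsilon})$ dilates $(a_i-a_j)A^{\star}$ has size at most $O(n^{1+2\varepsilon-2\delta})$, so for the relevant regime $\varepsilon<1/2$ there is no crowding whatsoever and no Pl\"unnecke--Ruzsa argument can force small doubling from it. Note also that your sketch never actually uses the ``no three concurrent'' half of general position, whereas the theorem is false without it (take all lines through one point of $A\times B$ with suitable slopes); any correct proof must make the concurrency hypothesis do real work. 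The final sum-product step is fine \emph{if} both doublings were established, but the route to them is missing.
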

Thus if a collection of lines $\mathcal{L}$ in general position 
is $cn$-rich on $A\times B$ 
then $|\mathcal{L}| < n^{\varepsilon}$ for any $\varepsilon > 0$.  
We will use it in the form of the following corollary.

\begin{corollary} \label{cor:croot}
   If $m$ lines in $\mathbb{R}^2$  are $cn$-rich on $A\times B$, 
   with $|A|=|B|=n$, such that no $p$ are parallel and no $q$ are concurrent, 
   then \[m \le (p+q)n^{\varepsilon}\] for every $\varepsilon > 0$.

\end{corollary}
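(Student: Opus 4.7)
The plan is to deduce Corollary~\ref{cor:croot} from Theorem~\ref{thm:croot} by extracting a large general-position sub-collection from the given $m$ lines. Let $\mathcal{L}$ denote the given collection, and pick a maximal sub-collection $\mathcal{S}\subseteq\mathcal{L}$ in general position (no two parallel, no three concurrent); set $s=|\mathcal{S}|$.

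By maximality, every $\ell\in\mathcal{L}\setminus\mathcal{S}$ must violate general position with $\mathcal{S}$, so either (i) $\ell$ is parallel to some line of $\mathcal{S}$, or (ii) $\ell$ passes through some intersection point of two lines of $\mathcal{S}$. Since no $p$ lines of $\mathcal{L}$ are parallel, each line of $\mathcal{S}$ accounts for at most $p-2$ parallel lines outside $\mathcal{S}$, contributing at most $s(p-2)$ to (i). Since no $q$ lines of $\mathcal{L}$ are concurrent, each of the $\binom{s}{2}$ pairwise intersection points of $\mathcal{S}$-pairs lies on at most $q-3$ additional lines, contributing at most $\binom{s}{2}(q-3)$ to (ii). Therefore
\[
m - s \;\le\; s(p-2) + \binom{s}{2}(q-3),
\]
which rearranges to $m \le K s^2$ for some constant $K = K(p,q)$, and hence $s \ge (m/K)^{1/2}$.

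Now fix $\varepsilon>0$ and suppose toward contradiction that $m > (p+q)n^{\varepsilon}$ for arbitrarily large $n$. Then $s \ge c'\, n^{\varepsilon/2}$ for some $c'>0$ independent of $n$, so $s > n^{\varepsilon/3}$ once $n$ is sufficiently large. Apply Theorem~\ref{thm:croot} with exponent $\varepsilon/3$ to obtain a $\delta>0$ such that no $n^{\varepsilon/3}$ lines in general position can all be $n^{1-\delta}$-rich on $A\times B$. But every line of $\mathcal{S}$ is $cn$-rich by hypothesis, and $cn > n^{1-\delta}$ for all large $n$, giving the desired contradiction. Thus $m \le (p+q)n^{\varepsilon}$ for all sufficiently large $n$.

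I do not foresee a serious obstacle: the only substantive step is the extraction bound $s = \Omega(\sqrt{m})$, a routine counting argument based on the no-$p$-parallel and no-$q$-concurrent hypotheses; everything else is a direct invocation of Theorem~\ref{thm:croot}.
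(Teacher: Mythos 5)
Your proposal is correct and takes essentially the same approach as the paper: both arguments extract a general-position subfamily of size $\Omega\bigl(\sqrt{m/(p+q)}\bigr)$ using the no-$p$-parallel and no-$q$-concurrent hypotheses (you via a maximal subfamily and counting how the remaining lines can violate general position, the paper via an equivalent greedy selection), and then apply Theorem~\ref{thm:croot} to that subfamily. The differences are purely cosmetic.
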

\begin{proof}
We show that the collection of lines contains at least 
$k=\sqrt{m}/\sqrt{2(p+q)}$ lines in general position.

We pick any line, and then successively choose a new line that is not 
parallel to any of the previously chosen lines, and does not go through the 
intersection point of any pair of them.  
If we have chosen $k$ such lines, 
then there are $k$ slopes we may not choose, which excludes less than $pk$ lines.  
And there are at most $\binom{k}{2}$ intersection points that we must avoid, 
so since there are less than $q$ lines concurrent at a point, 
this excludes less than $q\binom{k}{2}$ lines.  
Hence we can continue in this way at least until 
$m \le q\binom{k}{2}+pk+k < (p+q)k^2$, 
so we can get $k\ge \sqrt{m}/\sqrt{p+q}$ lines in general position.  
These lines are $cn$-rich on $A\times B$.  
Thus $k\le n^{\varepsilon'}$ for every $\varepsilon' > 0$.  
This gives $m\le (p+q)n^{\varepsilon}$ for every $\varepsilon>0$.
\end{proof}

We begin the proof of Lemma~\ref{lem:genLine}.  We will use the dual of a 
theorem of Beck \cite{Beck83},
which roughly states that given a collection of points, 
either ``many'' of the points are on the same line, 
or pairs of the points determine ``many'' distinct lines.

\begin{theorem}[Dual of Beck's Theorem] \label{thm:beck}
There exists $C_{BT}>0$ such that, given $N$ lines in $\mathbb{R}^2$, 
either $C_{BT}N$ lines are concurrent 
or the lines determine $C_{BT}N^2$ distinct pairwise intersection points.
\end{theorem}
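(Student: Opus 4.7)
The plan is to deduce the statement from the dual form of the Szemer\'edi-Trotter theorem via a dyadic decomposition of intersection points by multiplicity.

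First I would set up a double-counting identity. For each intersection point $p$, let $m(p)$ denote the number of the $N$ given lines through $p$, and let $s_i$ be the sizes of the parallel classes. Then
$$\sum_p \binom{m(p)}{2}=\binom{N}{2}-\sum_i \binom{s_i}{2}.$$
If some parallel class has $s_i \ge C_{BT}N$, those lines are concurrent at infinity and the theorem holds (reading ``concurrent'' projectively); likewise if some $m(p)\ge C_{BT}N$, we are done directly. Otherwise, for $C_{BT}$ sufficiently small, the right-hand side is at least $\binom{N}{2}/2$.

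Next I would invoke the dual Szemer\'edi-Trotter bound: the number $|P_{\ge k}|$ of points lying on at least $k$ of the $N$ lines is at most $C_{ST}(N^2/k^3+N/k)$. Using $\binom{m(p)}{2}=\sum_{k=2}^{m(p)}(k-1)$, the left-hand side telescopes as $\sum_{k\ge 2}(k-1)\,|P_{\ge k}|$. I would split this sum at a constant threshold $k_0$. Grouping the tail $k\ge k_0$ dyadically into ranges $[2^j,2^{j+1})$, inserting the Szemer\'edi-Trotter bound on each block, and using the cutoff $|P_{\ge k}|=0$ for $k\ge C_{BT}N$, gives a tail bound of the form $O(N^2/k_0)+O(C_{BT}N^2)$.

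Finally, taking $k_0$ large and $C_{BT}$ small enough forces this tail to be at most $\binom{N}{2}/4$, so the head $\sum_{2\le k<k_0}(k-1)|P_{\ge k}|$ is at least $\binom{N}{2}/4$. Since each point in the head contributes at most $\binom{k_0}{2}=O(1)$ to this sum, the number of distinct intersection points is $\Omega(N^2)$, as required. The main obstacle will be coordinating the constants: $k_0$ must be large enough to control the $N^2/k_0$ term, yet small enough that $\binom{k_0}{2}$ still yields a final lower bound of the form $C_{BT}N^2$ with the \emph{same} $C_{BT}$ that appears in the concurrence hypothesis. A secondary subtlety is handling parallel lines uniformly, either by bundling large parallel classes into the ``concurrent'' count as concurrence at infinity or by passing to the projective plane and applying duality to the standard (point-line) form of Beck's theorem.
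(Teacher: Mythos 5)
Your argument is correct, but note that the paper does not actually prove Theorem~\ref{thm:beck}: it is quoted as the dual of Beck's theorem with a citation to \cite{Beck83}, so there is no in-paper proof to compare against. What you have written is the standard Szemer\'edi--Trotter derivation of Beck's theorem, dualized, and it goes through: the identity $\sum_p\binom{m(p)}{2}+\sum_i\binom{s_i}{2}=\binom{N}{2}$ is exact since every pair of distinct lines is either parallel or meets in exactly one point; under the non-concurrency and non-parallelism assumptions the left sum is at least $\binom{N}{2}/2$; the tail $\sum_{k\ge k_0}(k-1)|P_{\ge k}|$ is $O(N^2/k_0)+O(C_{BT}N^2)$ by the dual rich-points bound together with the cutoff $|P_{\ge k}|=0$ for $k\ge C_{BT}N$ (the dyadic grouping is not even needed --- summing $k\cdot C_{ST}(N^2/k^3+N/k)$ directly over $k_0\le k< C_{BT}N$ suffices); and the head is then at least $\binom{N}{2}/4$ while each distinct point contributes at most $\binom{k_0}{2}$ to it. The ``obstacle'' you flag about coordinating constants is not a real one: choose $k_0$ depending only on $C_{ST}$, deduce that there are at least $c'N^2$ distinct intersection points for some $c'=c'(C_{ST})$, and set $C_{BT}=\min(c',\,1/(16C_{ST}),\,1/4)$ at the end; shrinking $C_{BT}$ strengthens the non-concurrency hypothesis and weakens the conclusion, so there is no circularity. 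Your projective reading of ``concurrent'' for large parallel classes is also exactly what the paper needs, since in the one place the theorem is applied (the proof of Lemma~\ref{lem:genLine}) the hypothesis bounds the sizes of both the concurrent and the parallel families, so folding parallelism into concurrency at infinity is consistent with the intended use.
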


\begin{proof}[Proof of Lemma~\ref{lem:genLine}]
Let $L$ be the set of lines, $|L| = m = cn^{\alpha}$, 
so we will show that we can take $\alpha=2/3+\beta/3$ and $c$ some constant.  
For every pair $(\ell_i,\ell_j)\in L^2$ we define the linear functions 
$\gamma_{ij} = \ell_i\circ\ell_j^{-1}$ and 
$\Gamma_{ij} = \ell_j^{-1}\circ\ell_i$.

\bigskip
First we will prove that large subsets of the $\gamma_{ij}$ and $\Gamma_{ij}$ are also rich.
Consider the tripartite graph $H$ with vertex sets $A\cup L\cup B$.  
Given $a\in A$ and $\ell \in L$, $(\ell, a)$ is an edge in $H$ if $\ell(a) \in B$.  
Similarly, given $\ell \in L$ and $b\in B$, $(\ell, b)$ is an edge if $\ell^{-1}(b)\in A$. 
Given $\ell\in L$, let $\deg_A(\ell)$ be the number of edges between $\ell$ and $A$ and 
$\deg_B(\ell)$ the number of edges between $l$ and $B$.  
Since the lines in $L$ are $c_1n$-rich on $A\times B$, 
we have $\deg_A(\ell) \ge c_1n$ and $\deg_B(\ell) \ge c_1n$ for each $\ell \in L$.  
Thus we have at least $cc_1n^{1+\alpha}$ edges between $A$ and $L$ 
and at least $cc_1n^{1+\alpha}$ edges between $B$ and $L$.

We will count cycles of length four in $H$ with one vertex in $A$ and one vertex in $B$.  
Every such $C_4$ gives a point in $B\times B$ on $\gamma_{ij}$ 
and a point in $A\times A$ on $\Gamma_{ij}$ for some pair $(i,j)$.
The number of paths of length two with one endpoint in $A$ and the other in $B$ 
is at least 
\[\#P_2 = \sum_{\ell\in L}\deg_A(\ell)\deg_B(\ell) \ge c_1^2n^{2+\alpha}.\]  
Let $p_{a,b}$ be the number of paths of length two between $a\in A$ and $b\in B$.  
Then \[\#P_2 = \sum_{a\in A, b\in B}p_{a,b}.\]
Now, by Jensen's Inequality, 
the number of $C_4$'s we are looking for is
\[\#C_4 = \sum_{a\in A, b\in B}\binom{p_{a,b}}{2} \ge 
|A\times B|\binom{\#P_2/|A\times B|}{2} \ge \frac{c_1^4n^{2+2\alpha}}{4}.\]
Suppose there are fewer than $(c_1^4/8)n^{2\alpha}$ pairs 
 $(\ell_i,\ell_j)\in L^2$ with at least $(c_1^4/8)n^2$ $C_4$'s between them.  
Then $H$ would have fewer than $(c_1^4/4)n^{2+2\alpha}$ $C_4$'s, a contradiction.  
Thus, setting $c_3=c_1^4/8$, we have at least $c_3n^{2\alpha}$ pairs $(i,j)$
for which $\gamma_{ij}$ and $\Gamma_{ij}$ are $c_3n$-rich 
on $B\times B$ and $A\times A$ respectively.

\bigskip
Next we define a different graph $G'$ and analyze it.
The vertex sets of $G'$ consist of those $\gamma_{ij}$ that are $c_3n$-rich on $B\times B$ 
and those $\Gamma_{kl}$ that are $c_3n$-rich on $A\times A$.  
If $\gamma_{ij}$ and $\gamma_{kl}$ coincide as point sets, we consider them as the same vertex.
Similarly we identify any coinciding $\Gamma_{ij}$ and $\Gamma_{kl}$, 
but we do not identify $\gamma_{ij}$ and $\Gamma_{kl}$ should they coincide.
We place an edge between $\gamma_{ij}$ and $\Gamma_{ij}$ for each pair $(i,j)$,
which means the graph is bipartite.

The graph may contain multiple edges, 
if we have $\ell_i, \ell_j, \ell_k, \ell_l\in L$ 
such that both $\gamma_{ij} = \gamma_{kl}$ and $\Gamma_{ij} = \Gamma_{kl}$.
But this implies that the four lines are concurrent: 
If $\ell_i$ and $\ell_j$ intersect in $(u,v)$, 
then $v=\ell_i\circ\ell_j^{-1}(v)=\ell_k\circ\ell_l^{-1}(v)$ 
 and $u=\ell_j^{-1}\circ\ell_i(u)=\ell_l^{-1}\circ\ell_k(u)$, 
so $(u,v)$ is also the intersection point of $\ell_k$ and $\ell_l$.

But with Beck's Theorem we can get a subgraph without multiple edges.
We will assume that $\beta < \alpha$, and check it at the end of the proof.
Then fewer than $c_2n^{\beta} < C_{BT}cn^{\alpha} = C_{BT}|L|$ lines are concurrent, 
so by Theorem~\ref{thm:beck}, 
the lines determine $C_{BT}n^{2\alpha}$ distinct intersection points.
The corresponding lines span a subgraph $G''$ without multiple edges, 
and at least $C_{BT}n^{2\alpha}$ edges. 

By the Szemer\'edi-Trotter Theorem, since all vertices are $c_3n$-rich lines, 
the number of vertices is at most $\le c_4n$ for some constant $c_4>0$.  
The average degree in $G''$ is then $\geq (C_{BT}/c_4)n^{2\alpha-1}$.  
Thus $G''$ contains a connected component $H$
containing at least $(C_{BT}/c_4)n^{2\alpha-1}$ vertices 
and at least $\frac{1}{2}(C_{BT}/c_4)^2n^{4\alpha-2}$ edges.  

Note that each $\gamma_{ij}$ and $\Gamma_{ij}$ have the same slope,
so every vertex in $H$ is a line with the same slope.  
If there are more than $cc_2n^{\alpha+\beta}$ edges in $H$ then 
we would have $\gamma_{ij_1}, \gamma_{ij_2},\dots, \gamma_{ij_k}$ vertices 
in this component with $k \ge c_2n^{\beta}$.  
This implies that the lines $\ell_{j_1}, \ell_{j_2}, \dots, \ell_{j_k}$ are all parallel,
which is a contradiction.  
So we have \[\frac{C_{BT}^2}{2c_4^2}n^{4\alpha-2} < cc_2n^{\alpha+\beta}.\]  
From this we see that we can choose $\alpha = 2/3 + \beta/3$ and $c> C_{BT}^2/(2c_2c_4^2)$.
\end{proof}

\subsection{Algebra and graph theory}
In the proofs of our higher-dimensional versions of the Elekes-Ronyai Theorem, 
we will need the following generalization of the fact that if a degree-$d$ 
polynomial of one variable has $d+1$ or more roots, then the polynomial is 
identically zero.

\begin{lemma}[Vanishing Lemma] \label{lem:vanish}
Let $K$ be a field, $F(y,z)\in K[y,z]$ with $\deg F = d$, 
and $B,C\subset K$ with $|B| = |C| = m$.

If $F(y_i,z_j) =0$ for $2dm$ of the pairs $(y_i,z_j)\in B\times C$,
then $F(y,z) = 0$.
\end{lemma}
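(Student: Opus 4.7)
The plan is a standard row-by-row counting argument on the rectangle $B \times C$. Viewing $F$ as a polynomial in $z$ over $K[y]$, I would write
\[
F(y,z) \;=\; \sum_{k=0}^{d} a_k(y)\, z^k,
\]
with each $a_k \in K[y]$ of degree at most $d$. The target is to show that each $a_k$ vanishes at enough points of $B$ to force $a_k \equiv 0$.

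For each fixed $y_i \in B$, the univariate polynomial $F(y_i,z) \in K[z]$ has degree at most $d$, so either $F(y_i,z) \equiv 0$ in $z$, or it has at most $d$ roots in $C$. Let $S \subseteq B$ be the set of those $y_i$ for which $F(y_i,z)$ vanishes identically. Rows indexed by $S$ contribute at most $|S|\cdot m$ zero pairs to $B\times C$, while the remaining rows contribute at most $d(m-|S|)$. The hypothesis of the lemma then gives
\[
|S|\cdot m \;+\; d(m - |S|) \;\ge\; 2dm,
\]
which rearranges to $|S|(m-d) \ge dm$.

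Before exploiting this, I would note that the hypothesis implicitly forces $m > d$: otherwise $2dm \ge 2m^2 > |B \times C|$, and no such collection of zero pairs could exist unless $F$ were already identically zero. Assuming then that $m > d$, the inequality above yields $|S| \ge dm/(m-d) > d$, hence $|S| \ge d+1$. Each coefficient $a_k(y)$ vanishes on $S$, so each $a_k$ has more roots than its degree and must be identically zero, giving $F \equiv 0$.

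I do not foresee a real obstacle here: the argument is just the univariate degree bound applied row-wise together with pigeonhole. The only mildly delicate point is the brief check that $m > d$, which is automatic from the counting hypothesis, and the precise constant $2dm$ in the statement is exactly what is needed to make the row count $|S|(m-d) \ge dm$ clear the threshold $d$.
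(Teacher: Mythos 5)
Your proof is correct and takes essentially the same route as the paper's: both count zeros row by row to extract at least $d+1$ values $y_i\in B$ with $F(y_i,z)\equiv 0$, and then finish with the univariate root bound in the $y$-direction. The only cosmetic differences are that the paper locates those $d+1$ rows by a greedy ``remove a column and recount'' argument and concludes by viewing $F$ as a degree-$\le d$ polynomial in $y$ over $K(z)$, whereas you use a single counting inequality for the set $S$ and argue coefficient-by-coefficient; these are interchangeable.
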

\begin{proof}
 There must be $d+1$ columns with $d+1$ zeroes, 
i.e. $d+1$ $y_i$ such that for each there are $d+1$ $z_j$ with $F(y_i,z_j) = 0$.
Indeed, after finding $d$ such columns and removing them, 
we are left with at least $2md - md = md$ zeroes, 
distributed over the $m-d$ remaining columns, 
so there must be another column with $d+1$ zeroes.
(Note that the exact bound is $2md - d^2+1$, 
but the simpler formula suffices for us.)

Since a nonzero polynomial in one variable of degree at most $d$ 
can have no more than $d$ roots, 
we have $F(y_i,z) = 0$ for each of the $y_i$ with $d+1$ zeroes.
Let $L = K(z)$ and define the polynomial $G(y)\in L[y]$ by $G(y) = F(y,z)$, 
so also $\deg G \leq d$.  
We have $G(y_i) =0$ for the $d+1$ different $y_i$, 
which implies by the same fact that $G(y) = 0$, hence also $F(y,z) = 0$.  
 
\end{proof}




We will also need the following three algebraic lemmas, 
which appear with proofs in \cite{Elek00}.
Let $K$ be a field.  
We call two decompositions $f(x) = \varphi_1(\psi_1(x))$ and $f(x) = \varphi_2(\psi_2(x))$ 
of a polynomial $f \in K[x]$ into polynomials from $K[x]$ 
\emph{equivalent} if $\psi_1(t) = a\psi_2(t)+b$ for some $a,b \in K$.

\begin{lemma} \label{lem:decomp}
Let $K$ be a field.  
Then no $f \in K[x]$ can have more than $2^d$ non-equivalent decompositions,
 where $d=\deg f$.
\end{lemma}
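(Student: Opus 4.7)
The plan is to translate decompositions of $f$ into intermediate fields of the function-field extension $K(x)/K(f(x))$, and then bound the number of those intermediate fields by a Galois-theoretic counting argument.

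First I would establish the dictionary between decompositions and subfields. A decomposition $f = \varphi \circ \psi$ produces the intermediate field $K(\psi(x))$ with $K(f(x)) \subseteq K(\psi(x)) \subseteq K(x)$. The key check is that two decompositions yield the same intermediate field if and only if they are equivalent in the sense of the lemma. The ``only if'' direction is straightforward: any $a \in K^\times$ and $b \in K$ give $K(a\psi(x)+b) = K(\psi(x))$. For the ``if'' direction, given two polynomials $\psi_1,\psi_2$ generating the same intermediate field, one compares degrees and leading coefficients to force $\psi_1 = a\psi_2+b$. Conversely, L\"uroth's theorem asserts that every intermediate field of $K(x)/K(f(x))$ has the form $K(h(x))$ for some $h \in K(x)$, and when $K(f) \subseteq K(h)$ one may normalize $h$ to be a polynomial, thereby recovering a decomposition $f = \varphi \circ h$.

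Second I would count the intermediate fields. Assuming $f$ is separable (the case of interest over $\mathbb{R}$), let $L$ be the Galois closure of $K(x)/K(f(x))$, put $G = \mathrm{Gal}(L/K(f(x)))$ and $H = \mathrm{Gal}(L/K(x))$. The fundamental theorem of Galois theory puts intermediate fields of $K(x)/K(f(x))$ in bijection with subgroups $K$ satisfying $H \leq K \leq G$. Each such $K$ is recovered as the union of its $H$-cosets, so it is uniquely determined by the subset $K/H$ of the coset space $G/H$, which has $[G:H] = [K(x):K(f(x))] = d$ elements. Hence the number of such subgroups is at most $2^d$.

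The main obstacle in a purely general proof is the possibility of inseparability in positive characteristic: if $\mathrm{char}\, K = p$ and $f(x) = g(x^{p^e})$ with $g$ separable, then $K(x)/K(f(x))$ is not separable. One handles this by noting that any decomposition of $f$ factors through $x \mapsto x^{p^e}$ up to linear equivalence, reducing the count for $f$ to the count for the smaller separable polynomial $g$; the slack between the Galois-theoretic bound of $2^{d-1}$ (taking into account that the identity coset is always contained in $K/H$) and the stated bound $2^d$ leaves enough room to absorb this reduction. Since the paper takes $K = \mathbb{R}$, the separable Galois argument is all that is really needed.
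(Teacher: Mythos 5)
Your main argument is the standard one, and it is essentially the proof that this paper relies on: the paper gives no proof of Lemma~\ref{lem:decomp} itself but cites Elekes and R\'onyai, who argue exactly as you do --- non-equivalent decompositions inject into the intermediate fields of $K(x)/K(f(x))$, and the subgroups between $H=\mathrm{Gal}(L/K(x))$ and $G=\mathrm{Gal}(L/K(f(x)))$ are unions of cosets of $H$, hence number at most $2^{[G:H]}=2^d$. The dictionary step is fine (note that only injectivity of the map from decomposition classes to intermediate fields is needed for the upper bound, so your appeal to L\"uroth and to polynomial normalization is harmless but superfluous), and the coset count is correct.

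The one genuine flaw is your treatment of the inseparable case. The claim that every decomposition of $f(x)=g(x^{p^e})$ factors through $x\mapsto x^{p^e}$ up to linear equivalence is false: in characteristic $p$, $f(x)=x^{p^2}-x^p$ satisfies $f=(x^p)\circ(x^p-x)$, and the separable inner function $x^p-x$ does not factor through Frobenius, nor is it linearly equivalent to anything that does. So the proposed reduction to the separable subpolynomial does not capture all decompositions. The clean repair, which also makes the separability discussion unnecessary, is Steinitz's argument: $K(x)/K(f(x))$ is a simple extension of degree $d$, and for any simple extension $E=F(\alpha)$ the map sending an intermediate field $B$ to the minimal polynomial of $\alpha$ over $B$ is injective (the coefficients of that polynomial generate $B$ over $F$) into the set of monic divisors of the minimal polynomial of $\alpha$ over $F$ in $E[T]$, of which there are at most $2^d$. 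Since this paper only ever invokes the lemma over fields of characteristic zero, the gap does not affect anything downstream, but as a proof of the lemma as stated (arbitrary $K$) your characteristic-$p$ paragraph would need to be replaced.
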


\begin{lemma} \label{lem:alg}
   \text{ }\begin{enumerate}
      \item Let $E$ be a field, $\varphi \in E[x]$ a polynomial of degree $d>0$.  
         Then every $F \in E[x]$ can be written in the form \[F = a_0 + a_1 x + 
         \dots + a_{d-1} x^{d-1},\] where $a_i \in E(\varphi),$ in a unique way.
      \item Suppose further that $E=L(y)$ is a rational function field over some 
         field $L$, and $\varphi \in L[x]$.  Let $m$ be the degree of $F$ in 
         $y$.  Then the degree of $a_i$ in $y$ is at most $m(d+1)$.  (Here $a_i$ 
         is viewed as a polynomial of $\varphi$ and $y$ over 
         $L$.)
   \end{enumerate}
\end{lemma}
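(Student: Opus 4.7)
My plan for Part 1 is to invoke the field-theoretic fact that $[E(x):E(\varphi)]=d$ with $\{1,x,\ldots,x^{d-1}\}$ forming a basis over $E(\varphi)$. The element $x$ is a root of $\Phi(X):=\varphi(X)-\varphi\in E(\varphi)[X]$, a polynomial of degree $d$ in $X$. To see that $\Phi$ is irreducible and hence is the minimal polynomial of $x$ over $E(\varphi)$, I note that $t:=\varphi(x)$ is transcendental over $E$ (any nontrivial algebraic relation for $\varphi(x)$ over $E$ would give a nonzero polynomial relation for $x$ over $E$), so $E(\varphi)=E(t)$; then $\varphi(X)-t$ is linear and primitive in the UFD $E[t][X]=E[X,t]$, hence irreducible there, and Gauss's lemma transfers this to irreducibility in $E(t)[X]$. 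Uniqueness of $F=\sum_{i=0}^{d-1}a_ix^i$ with $a_i\in E(\varphi)$ follows from the basis property, and existence for any $F\in E[x]\subset E(x)$ is automatic.

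For Part 2 I would construct the $a_i$ explicitly by repeated Euclidean division by $\varphi$, rather than through a Cramer's rule argument, because this route keeps the $y$-degrees under trivial control. The crucial point is that $\varphi\in L[x]$, so its leading coefficient is a nonzero element of $L$ and performing long division inside $L[y][x]$ never introduces $y$ in a denominator. Concretely, given $F\in L[y][x]$ with $\deg_yF=m$, dividing $F=\varphi\cdot Q+R$ with $\deg_xR<d$ produces $Q,R\in L[y][x]$ with $\deg_yQ,\deg_yR\le m$. Iterating on $Q$ expresses $F$ as $\sum_{i\ge 0}R_i\varphi^i$ with every $R_i\in L[y][x]$ satisfying $\deg_xR_i<d$ and $\deg_yR_i\le m$. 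Collecting the coefficient of $x^j$ for each $0\le j\le d-1$ realizes $a_j$ as a polynomial in $\varphi$ and $y$ over $L$ with $\deg_ya_j\le m$, well within the claimed bound $m(d+1)$.

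The main obstacle is really just the irreducibility assertion for $\varphi(X)-\varphi$ in Part 1; once that is in hand, everything else is elementary bookkeeping on degrees under long division. The looseness in the claimed $m(d+1)$ versus the $m$ that I actually obtain is not a concern: the authors presumably built in the slack to accommodate a Cramer-style argument in which a Vandermonde denominator, together with the $d$ conjugates of $x$ over $E(\varphi)$, would contribute the extra factor of $d+1$. Either route produces the same qualitative conclusion needed downstream.
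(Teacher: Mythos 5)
Your proof is correct, and it is worth noting that the paper itself offers no proof of this lemma: it defers to the Elekes--R\'onyai paper, where Part 1 is established essentially as you do it (via $[E(x):E(\varphi)]=d$, with $\varphi(X)-\varphi$ as the minimal polynomial of $x$ over $E(\varphi)$), but Part 2 is obtained differently, by evaluating $F(x_j)=\sum_i a_i x_j^i$ at the $d$ conjugates $x_1,\dots,x_d$ of $x$ over $E(\varphi)$ and solving the resulting Vandermonde system by Cramer's rule; the determinant bookkeeping there is what produces the factor $d+1$ in the bound $m(d+1)$. Your replacement of that computation by iterated Euclidean division by $\varphi$ inside $L[y][x]$ is a genuinely more elementary route, and your key observation --- that the leading coefficient of $\varphi$ lies in $L$, so division never puts $y$ in a denominator and never raises the $y$-degree --- is exactly what makes it work; it even yields the sharper bound $\deg_y a_i\le m$. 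The one step you should make explicit is that the expression $F=\sum_i R_i\varphi^i$ produced by division, once regrouped as $\sum_{j=0}^{d-1}\bigl(\sum_i r_{ij}(y)\varphi^i\bigr)x^j$, is identified with \emph{the} expansion of Part 1 by invoking the uniqueness you proved there; with that said, your degree bound applies to the $a_i$ of the lemma, and since only the weaker bound $m(d+1)$ is used downstream, either argument serves.
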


\begin{lemma}[Reparametrization Lemma] \label{lem:repar}
Suppose that two parametric curves $(f_1(t), g_1(t))$ and $(f_2(t), g_2(t))$ 
coincide as sets, with $f_i,g_i\in K[t]$ for a field $K$.
Then there are $p,q,\varphi_1, \varphi_2\in K[t]$ such that
$$\begin{array}{cc}
f_1 = p\circ \varphi_1, & g_1 = q\circ \varphi_1,\\
f_2 = p\circ \varphi_2, & g_2 = q\circ \varphi_2.
\end{array}$$
\end{lemma}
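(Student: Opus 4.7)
The plan is to reduce both parametrizations to a common \emph{birational} polynomial parametrization of the shared image curve $C$ using L\"uroth's theorem, and then reconcile the two reduced parametrizations via an affine change of parameter that can be absorbed into the $\varphi_i$. I assume throughout that at least one of $f_1, g_1$ is nonconstant, since otherwise $C$ is a point and the lemma is trivial.

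First, for $i = 1, 2$ I would study the intermediate field $L_i := K(f_i, g_i) \subseteq K(t)$. By L\"uroth's theorem $L_i = K(\rho_i)$ for some $\rho_i \in K(t)$; and since $L_i$ contains a nonconstant polynomial (namely $f_i$ or $g_i$), a polynomial refinement of L\"uroth lets one replace $\rho_i$ by a polynomial generator $\varphi_i \in K[t]$. Concretely, I would take $\varphi_i$ of minimal positive degree in $L_i \cap K[t]$ and use Euclidean division (together with a homogenization argument on $\rho_i = u/v$ in lowest terms) to show $L_i \cap K[t] = K[\varphi_i]$, whence $L_i = K(\varphi_i)$. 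Next, since $t$ is integral over $K[\varphi_i]$---it satisfies $\varphi_i(x) - \varphi_i = 0$---and $K[\varphi_i]$ is integrally closed in $K(\varphi_i)$, one gets $K(\varphi_i) \cap K[t] = K[\varphi_i]$, so $f_i, g_i \in K[\varphi_i]$. This yields polynomials $p_i, q_i \in K[x]$ with $f_i = p_i \circ \varphi_i$ and $g_i = q_i \circ \varphi_i$. A direct degree check shows that the parametrization $u \mapsto (p_i(u), q_i(u))$ is birational: the isomorphism $K(u) \cong K(\varphi_i)$ sending $u \leftrightarrow \varphi_i(t)$ carries $K(p_i(u), q_i(u))$ onto $K(f_i, g_i) = K(\varphi_i)$, so $K(p_i(u), q_i(u)) = K(u)$.

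At this point I have two birational polynomial parametrizations $(p_1(u), q_1(u))$ and $(p_2(u), q_2(u))$ of the same irreducible rational curve $C$. They induce two $K$-isomorphisms $K(C) \to K(u)$, whose composition is a $K$-automorphism of $K(u)$, i.e.\ a M\"obius transformation $\mu(u) = (au+b)/(cu+d)$ with $p_2(u) = p_1(\mu(u))$ and $q_2(u) = q_1(\mu(u))$. If $c \neq 0$, then $\mu$ has a pole at $-d/c \in K$; since at least one of $p_1, q_1$, say $p_1$, is nonconstant, $p_1 \circ \mu$ blows up there, contradicting the fact that $p_2$ is a polynomial taking a finite value at $-d/c$. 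Hence $\mu$ is affine, $\mu(u) = au + b$, and setting $p := p_1$, $q := q_1$, and replacing $\varphi_2$ by the polynomial $a\varphi_2 + b$, one obtains $f_2 = p \circ \varphi_2$ and $g_2 = q \circ \varphi_2$, completing the argument.

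The main obstacle I anticipate is the polynomial refinement of L\"uroth's theorem that upgrades $\rho_i \in K(t)$ to $\varphi_i \in K[t]$: this requires a careful analysis of which elements of $K(u/v)$ land in $K[t]$ when $u,v$ are coprime polynomials. Everything else is short algebra, with only minor bookkeeping for degenerate cases such as constant $f_i$ or $g_i$, where $C$ degenerates to a coordinate line and the claim is immediate.
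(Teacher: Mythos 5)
Your argument is correct, but there is no in-paper proof to compare it against: the paper quotes this lemma (together with Lemmas~\ref{lem:decomp} and~\ref{lem:alg}) from Elekes and R\'onyai \cite{Elek00} without proof. Your route --- L\"uroth's theorem upgraded to a polynomial generator $\varphi_i$ of $K(f_i,g_i)$, integral closedness of $K[\varphi_i]$ to get $f_i,g_i\in K[\varphi_i]$, birationality of the reduced parametrizations $(p_i,q_i)$, and the observation that a M\"obius transformation carrying one polynomial parametrization to another must be affine --- is the standard field-theoretic proof and is in the same spirit as the argument in \cite{Elek00}. The one step that genuinely needs filling in is the one you flag: the ``polynomial L\"uroth'' refinement. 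Note that Euclidean division by a minimal-degree element $\varphi_i$ of $L_i\cap K[t]$ does not by itself give $L_i\cap K[t]=K[\varphi_i]$, since the quotient need not lie in $L_i$; what does work is the homogenization analysis you allude to: write a nonconstant polynomial $F\in L_i$ as $P(\rho)/Q(\rho)$ in lowest terms with $\rho=u/v$, factor $\tilde P(u,v)$ and $\tilde Q(u,v)$ into forms $u-\beta v$ (which are pairwise coprime for distinct $\beta$ and coprime to $v$), and observe that polynomiality of $F$ forces $v$ or some $u-\beta v$ to be constant, whence $K(\rho)=K(u)$ is generated by a polynomial; your integrality argument then gives $K(\varphi_i)\cap K[t]=K[\varphi_i]$. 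Two minor points of bookkeeping: ``coincide as sets'' must be read over an infinite field (as in every application here, where $K$ is $\mathbb{R}$ or $\mathbb{R}(x)$), so that equal infinite images force equal vanishing ideals and hence a common function field; and in the M\"obius step you should invoke whichever of $p_1,q_1$ is nonconstant, which is guaranteed outside the degenerate cases you already dispose of.
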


Finally, we need the following graph-theoretic lemma, also proved in \cite{Elek00}.
\begin{lemma}[Graph Lemma]\label{lem:graph}
For every $c$ and $k$ there is a $C_{GL} = C_{GL}(c,k)$ with the following property.\\
If a graph has $N$ vertices and $cN^2$ edges, and the edges are colored so that at most $k$ colors meet at each vertex,
then it has a monochromatic subgraph with $C_{GL}N^2$ edges.
\end{lemma}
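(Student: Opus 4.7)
The plan is to bound the size of the largest monochromatic colour class directly, combining the global edge count $cN^2$ with the local ``at most $k$ colors per vertex'' hypothesis through a short Cauchy--Schwarz-type estimate. Index the colors actually appearing in the coloring by $i$; for each such color let $m_i$ be the number of edges of color $i$ and let $N_i$ be the number of vertices incident to at least one edge of color $i$. The goal is to show that $\max_i m_i \geq C_{GL}\, N^2$ for some $C_{GL} = C_{GL}(c,k)$, after which the desired monochromatic subgraph is simply the collection of edges of the maximising color.

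The first step is the only place the local hypothesis enters. Since each vertex sees at most $k$ distinct colors among its incident edges, each vertex is counted in at most $k$ of the sets underlying the $N_i$'s, which yields the double-counting bound $\sum_i N_i \leq kN$. The second step is the trivial geometric observation that a subgraph on $N_i$ vertices has at most $\binom{N_i}{2}$ edges, so $\sqrt{2 m_i}\leq N_i$; substituting into the previous inequality produces the key intermediate estimate
\[\sum_i \sqrt{m_i} \;\leq\; \frac{kN}{\sqrt{2}}.\]

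The third step is to feed this $\ell^{1/2}$-type bound into the $\ell^1$ hypothesis $\sum_i m_i = cN^2$ via the one-line inequality $\sum_i m_i = \sum_i \sqrt{m_i}\cdot\sqrt{m_i} \leq (\max_i \sqrt{m_i})\sum_i \sqrt{m_i}$. Rearranging gives $\max_i m_i \geq 2c^2 N^2/k^2$, so one may take $C_{GL}(c,k) = 2c^2/k^2$.

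There is essentially no serious obstacle: the whole argument is a pigeonhole-plus-Cauchy-Schwarz calculation of a few lines. The only point worth highlighting is that the local hypothesis is used only through the single global incidence bound $\sum_i N_i \leq kN$, which when combined with the trivial constraint $m_i \leq N_i^2/2$ already balances the edge-count hypothesis to force the required $\Omega_{c,k}(N^2)$ lower bound on some one colour class.
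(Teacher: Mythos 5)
Your argument is correct and complete: the double count $\sum_i N_i \le kN$, the bound $m_i \le \binom{N_i}{2}$ giving $\sum_i \sqrt{m_i} \le kN/\sqrt{2}$, and the estimate $cN^2 = \sum_i m_i \le (\max_i \sqrt{m_i})\sum_i\sqrt{m_i}$ together yield $\max_i m_i \ge 2c^2N^2/k^2$, so $C_{GL}(c,k)=2c^2/k^2$ works. The paper itself does not prove this lemma but defers to Elekes and R\'onyai, whose argument is essentially this same vertex--color incidence count combined with the quadratic edge--vertex relation within a color class, so your proof matches the intended one.
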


\section{Proof of Theorems~\ref{thm:ER2}~and~\ref{thm:ERbest}} 
\label{sec:erLess}

Suppose $z = f(x,y)$ contains $c n^{\alpha + 1}$ points of $A\times B\times C$,
where $|A| = |C| = n$ and $|B| = \tilde{c}n^\alpha$; 
$\tilde{c}$ and $\alpha$ will be determined later.
Throughout we will use $d = \deg f$. All functions will be polynomials.
  
\subsection{Constructing $\widehat{f}_i$}
For each of the $\tilde{c}n^\alpha$ $b_i\in B$ define 
$$f_i(x) = f(x, b_i).$$
Then each $f_i$ is a polynomial in $\mathbb{R}[x]$ of degree at most $d$.

\begin{lemma}\label{lem:nottoomanysame2D}
 If at least $d+1$ of the $f_i$ are identical, then $f(x,y) = q(x)$.\\
In particular, the conclusion of Theorems~\ref{thm:ER2}~and~\ref{thm:ERbest} holds.
\end{lemma}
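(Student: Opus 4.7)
The plan is to exploit the fact that $\deg f = d$ forces the $y$-degree of $f$ to be at most $d$, which caps the number of distinct $b$-values at which the specialization $f(x,b)$ can equal any given polynomial $q(x)$. Viewing $f(x,y)$ as an element of $\mathbb{R}(x)[y]$, a nonzero polynomial of degree at most $d$ in this ring can have at most $d$ roots; the hypothesized coincidence of $d+1$ specializations will then force the polynomial to be independent of $y$.

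Concretely, I would suppose $f_{i_1} = \cdots = f_{i_{d+1}} = q$ for some $q \in \mathbb{R}[x]$ and form $F(x,y) := f(x,y) - q(x) \in \mathbb{R}[x,y]$, which has $y$-degree at most $d$. For each $k = 1, \ldots, d+1$ the identity $F(x, b_{i_k}) = f_{i_k}(x) - q(x) = 0$ holds in $\mathbb{R}[x]$, hence also in $\mathbb{R}(x)$; so viewed in $\mathbb{R}(x)[y]$, $F$ has the $d+1$ distinct roots $b_{i_1}, \ldots, b_{i_{d+1}}$ in $y$ and must vanish, giving $f(x,y) = q(x)$. For the ``in particular'' claim I would take $g(t) = q(t)$, $k(x) = x$, and $l(y) \equiv 0$ to realize $f(x,y) = g(k(x) + l(y))$, matching the additive conclusion of Theorems~\ref{thm:ER2} and~\ref{thm:ERbest}.

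There is essentially no obstacle; the only subtlety worth flagging is that ``identical $f_i$'' must be read as equality of polynomials in $x$ (so that $F(x, b_{i_k})$ vanishes identically in $x$, not merely on the finite set $A$), and that the total degree bound $\deg f \le d$ is what controls the $y$-degree. Once both are noted, the proof collapses to a single application of the standard root-count for univariate polynomials over a field, with no need to invoke the heavier Vanishing Lemma~\ref{lem:vanish}.
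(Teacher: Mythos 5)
Your proof is correct and follows essentially the same route as the paper: form $F = f(x,y) - q(x)$, view it as a polynomial in $y$ over $\mathbb{R}(x)$ of degree at most $d$, and conclude from its $d+1$ roots $b_{i_1},\ldots,b_{i_{d+1}}$ that it vanishes identically. Your explicit choice $g=q$, $k(x)=x$, $l\equiv 0$ for the ``in particular'' clause is a harmless addition the paper leaves implicit.
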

\begin{proof}
Suppose that $f_i(x) = q(x)$ at least $d+1$ times.  
Then considering $F(y) = f(x,y) - q(x)$ as 
a polynomial in $y$ over the field $\mathbb{R}(x)$, 
we have $F(y)$ vanishing $d+1$ times, so $F(y) = 0$ identically.
\end{proof}\noindent
{\bf Assumption:} Throughout the rest of this section we will assume that 
at most $d$ of the $f_i$ are identical.
\bigskip

Let $c_1 = \min(c/2, \tilde{c}/2)$.  Then at least $c_1n^{\alpha}$ of the $f_i$ are 
$c_1n$-rich on $A\times C$.  Otherwise $z=f(x,y)$ would contain fewer than 
$cn^{\alpha + 1}$ points of $A\times B\times C$.

We construct a graph $G$ with the $c_1n^{\alpha}$ $c_1n$-rich $f_i$ as vertices and 
edge set $E$ consisting of all pairs $(f_i,f_j)$.
\begin{lemma}
 There is a subgraph of $G$ with edge set $\widehat{E}\subset E$ of size 
 $|\widehat{E}|\geq c_2n^{2\alpha}$, such that the following holds.
There is a polynomial $k(x)$ such that for all $(f_i, f_j)\in \widehat{E}$ we 
can write
$$f_i = \widehat{f}_i\circ k,$$ $$f_j = \widehat{f}_j\circ k,$$ and 
$\widehat{f}_i$ and $\widehat{f}_j$ share no non-linear common inner function.\\
The $\widehat{f}_i$ are also $c_1n$-rich on $k(A)\times C$.
\end{lemma}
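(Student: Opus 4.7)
The plan is to color each edge of $G$ by a canonical maximal common inner polynomial of its endpoints and then apply the Graph Lemma (Lemma~\ref{lem:graph}) to extract a large monochromatic subgraph.

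For each edge $(f_i,f_j)\in E$, choose a polynomial $k_{ij}\in\mathbb{R}[x]$ of maximal degree such that both $f_i$ and $f_j$ decompose as $f_i=\widehat{f}_i\circ k_{ij}$ and $f_j=\widehat{f}_j\circ k_{ij}$ for some $\widehat{f}_i,\widehat{f}_j\in\mathbb{R}[x]$; such a $k_{ij}$ exists (take $k_{ij}(x)=x$) and has degree at most $d$. Declare two edges equivalent when their maximal common inner polynomials are linearly equivalent in the sense preceding Lemma~\ref{lem:decomp}, and use these equivalence classes as the edge colors. By Lemma~\ref{lem:decomp}, any fixed $f_i$ has at most $2^d$ non-equivalent decompositions, so at most $2^d$ colors meet the vertex $f_i$, uniformly in $n$.

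The graph $G$ has $N=c_1n^{\alpha}$ vertices and is complete on this vertex set, hence contains at least $\binom{N}{2}\geq N^2/3$ edges (for $n$ large). Applying Lemma~\ref{lem:graph} with parameters $1/3$ and $2^d$ yields a monochromatic subgraph with edge set $\widehat{E}$ satisfying $|\widehat{E}|\geq C_{GL}(1/3,2^d)\,N^2=:c_2 n^{2\alpha}$. Fix a representative $k$ of the common color class; for each $(f_i,f_j)\in\widehat{E}$ the chosen $k_{ij}$ agrees with $k$ after a linear change of variables, which can be absorbed into $\widehat{f}_i$ and $\widehat{f}_j$, so after relabelling we have $f_i=\widehat{f}_i\circ k$ and $f_j=\widehat{f}_j\circ k$ with the same $k$.

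It remains to verify the two claimed properties. If $\widehat{f}_i$ and $\widehat{f}_j$ shared a nonlinear common inner polynomial $\mu$, writing $\widehat{f}_i=P\circ\mu$ and $\widehat{f}_j=Q\circ\mu$, then $\mu\circ k$ would be a common inner polynomial of $f_i$ and $f_j$ of degree $(\deg\mu)(\deg k)>\deg k$, contradicting the maximality of $k_{ij}$. For the richness statement, each point $(a,c)\in A\times C$ on the graph of $f_i$ yields a point $(k(a),c)$ on the graph of $\widehat{f}_i$, and since $k$ has degree at most $d$ the map $a\mapsto k(a)$ is at most $d$-to-one on $A$; thus the $c_1n$ points of $f_i$ in $A\times C$ contribute at least $(c_1/d)\,n$ distinct points of $k(A)\times C$ to $\widehat{f}_i$, and the factor $d$ is absorbed into the constant $c_1$. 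The main subtlety is simply the bound on the number of colors at each vertex, which is precisely what Lemma~\ref{lem:decomp} provides; once this is in place the Graph Lemma does the combinatorial work and the algebraic properties of $k$ follow directly from its maximality.
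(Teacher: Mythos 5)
Your proposal is correct and follows essentially the same route as the paper: color each edge by the linear-equivalence class of a maximal common inner polynomial, bound the number of colors per vertex by $2^d$ via Lemma~\ref{lem:decomp}, and extract a monochromatic subgraph with the Graph Lemma~\ref{lem:graph}. Your verification of the no-nonlinear-common-inner-function property and of the richness (with the explicit $d$-to-one counting) is in fact slightly more detailed than the paper's own sketch.
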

\begin{proof}
Color each edge $(f_i, f_j)$ of $G$ with the equivalence class of a common inner 
function $\varphi$ of maximum degree, i.e. $f_i(x) = g(\varphi(x))$ and $f_j(x) 
= h(\varphi(x))$, and no such $\varphi$ of higher degree exists; two such 
inner functions $\varphi, \phi$ are equivalent if $\phi(x) = a\varphi(x) + b$.

By Lemma~\ref{lem:decomp}, at every vertex there are at most $2^d$ colors, 
so by Graph Lemma \ref{lem:graph}, with $N = c_1n^{\alpha}$, 
there is a monochromatic subgraph with $C_{GL}N^2 = C_{GL}c_1^2n^{2\alpha}$ edges.
We take $\widehat{E}$ to be the edge set of this subgraph.
This means that all the $f_i$ involved in this subgraph have a common inner 
function $k(x)$ 
(actually up to equivalence, 
but by modifying the $\widehat{f}_i$ that is easily overcome),
and no pair corresponding to an edge of $\widehat{E}$
has a common inner function of higher degree.
That allows us to define the $\widehat{f}_i$ as in the theorem; they must be rich 
since otherwise the $f_i$ could not be rich.
\end{proof}

\subsection{Constructing $\gamma_{ij}$}
For the $c_2n^{2\alpha}$ pairs 
$\widehat{f}_i, \widehat{f}_j$ for which $(f_i,f_j)\in \widehat{E}$,
we construct the curves
$$\gamma_{ij}(t) = \left(\widehat{f}_i(t), \widehat{f}_j(t)\right).$$
      
\begin{lemma}\label{lem:gammas}\text{ }
   \begin{enumerate}
      \item At least $c_3n^{2\alpha}$ $\gamma_{ij}$ are $c_3n$-rich on $C\times C$.
      \item Each $\gamma_{ij}$ is an irreducible algebraic curve of degree at most $2d$.
   \end{enumerate}
\end{lemma}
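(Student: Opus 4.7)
My plan is to prove the two parts of the lemma separately. For Part (1), I carry the $c_1 n$-richness of the $f_i$ through to the $\gamma_{ij}$ by inclusion--exclusion in $A$. For each pair $(f_i, f_j) \in \widehat{E}$, define $A_{ij} = \{a \in A : f_i(a) \in C \text{ and } f_j(a) \in C\}$. The two sets $\{a \in A: f_i(a) \in C\}$ and $\{a \in A: f_j(a) \in C\}$ each have size $\geq c_1 n$ and both lie inside $A$ of size $n$, so by inclusion--exclusion $|A_{ij}| \geq (2c_1 - 1)n$. For this to be positive I need $c_1 > 1/2$, which I arrange by choosing $\tilde{c}$ small enough relative to $c$ and, if necessary, redoing the initial averaging at richness threshold slightly above $n/2$ rather than the naive $(c/\tilde{c})n/2$ (e.g.\ with $\tilde{c} = c$ and threshold $2n/3$ one obtains $c\,n^\alpha$ of the $f_i$ that are $(2/3)n$-rich, giving $c_1 = 2/3$). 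Each $a \in A_{ij}$ produces the point $(f_i(a), f_j(a)) = (\widehat{f}_i(k(a)), \widehat{f}_j(k(a))) \in C \times C$ lying on $\gamma_{ij}$, and at most $d$ values of $a$ can collapse to the same point, since $f_i(a) = c$ has at most $d$ solutions. Hence each of the $|\widehat{E}| \geq c_2 n^{2\alpha}$ curves is $((2c_1-1)/d)n$-rich on $C \times C$, which yields Part (1) with $c_3 = \min(c_2, (2c_1-1)/d)$.

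For Part (2), the curve $\gamma_{ij}$ is the Zariski closure of the image of the polynomial parametrisation $t \mapsto (\widehat{f}_i(t), \widehat{f}_j(t))$ of the affine line. Since the affine line is irreducible, so is its image, and the image is one-dimensional (hence a genuine curve) provided at least one of $\widehat{f}_i, \widehat{f}_j$ is non-constant; our standing Assumption that at most $d$ of the $f_i$ are identical rules out the degenerate case, as otherwise Lemma~\ref{lem:nottoomanysame2D} already gives the conclusion of the theorem. For the degree, intersecting $\gamma_{ij}$ with a generic line $\alpha x + \beta y = \gamma$ reduces to solving $\alpha \widehat{f}_i(t) + \beta \widehat{f}_j(t) - \gamma = 0$, a polynomial in $t$ of degree at most $\max(\deg \widehat{f}_i, \deg \widehat{f}_j) \leq d$, so $\deg \gamma_{ij} \leq d \leq 2d$.

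The main obstacle is the constant bookkeeping for Part (1): the inclusion--exclusion bound requires $c_1 > 1/2$, which forces a mild upper bound on $\tilde{c}$ relative to $c$. This is harmless since $\tilde{c}$ is ours to choose in Theorem~\ref{thm:ER2} and the constraint can be absorbed into the final $\tilde{c}(c,d)$. Should one wish to avoid even this constraint, I would replace the inclusion--exclusion by a Cauchy--Schwarz count of the incidences $\{(a,i) : a \in A,\, i \in V(\widehat{E}),\, f_i(a) \in C\}$: the resulting bound $\sum_{(i,j) \in V(\widehat{E})^2} |A_{ij}| \geq |V(\widehat{E})|^2 c_1^2 n$ produces many ordered pairs with $|A_{ij}| \geq c_1^2 n/2$, which can then be intersected with $\widehat{E}$ using its density in the complete graph on $V(\widehat{E})$.
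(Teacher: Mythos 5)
Your proposal is correct in outline but takes a genuinely different route from the paper on both parts, and the route you lead with for Part (1) carries a cost you should weigh. For Part (1) the paper does not use inclusion--exclusion: it forms the bipartite incidence graph between $k(A)$ and the $\widehat{f}_i$, lower-bounds the number of length-two paths by convexity, $\#P_2 \geq |k(A)|\binom{m/|k(A)|}{2}$, and concludes that a positive proportion of pairs share $c''n$ common neighbours $t$. That second-moment argument works for \emph{any} richness constant $c_1>0$, whereas your inclusion--exclusion genuinely needs $c_1>1/2$; the fix you propose (rerun the initial averaging at threshold $\tfrac{2}{3}n$) forces $\tilde{c}<\tfrac{3}{2}c$, and this is in tension with how Theorem~\ref{thm:ER2} is closed later, where $\tilde{c}$ is taken \emph{large} so that $c_4>C_{GLL}$. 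So if you keep the inclusion--exclusion you must recheck that the endgame survives with $\tilde{c}$ pinned near $c$; otherwise use your Cauchy--Schwarz fallback, which is essentially the paper's argument (though your sketch of how to intersect the resulting rich pairs with $\widehat{E}$ by ``density'' needs more care, since two positive-density subsets of the pairs need not meet). In partial compensation, your per-pair bound certifies richness for \emph{every} $\gamma_{ij}$ with $(f_i,f_j)\in\widehat{E}$, while the paper's averaging only produces rich pairs without explicitly guaranteeing they lie in $\widehat{E}$ --- a point the next lemma quietly relies on --- so your primary route is actually tighter there. For Part (2) the paper takes the resultant of $x-\widehat{f}_i(t)$ and $y-\widehat{f}_j(t)$; your argument via the image of the irreducible affine line plus intersection with a generic line is equally valid, cleaner on irreducibility, and gives the sharper bound $\deg\gamma_{ij}\le d$. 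One small repair: the standing Assumption (no $d+1$ identical $f_i$) does not by itself forbid an individual $\widehat{f}_i$ from being constant; you should instead note that if more than $d$ of the $f_i$ are constant then $f(x,y)$ is independent of $x$ by the argument of Lemma~\ref{lem:nottoomanysame2D}, so at most $d$ such degenerate fibres occur and they can be discarded.
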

\begin{proof}
\begin{enumerate}
\item We define a bipartite graph with vertex set $k(A) \cup \{\widehat{f}_i\}$, 
and we connect $t\in k(A)$ with $\widehat{f}_i$ if $\widehat{f}_i(t)\in C$.  
Since $|\widehat{E}|\geq c_2n^{2\alpha}$, 
the number of $\widehat{f}_i$ is at least $\sqrt{c_2}n^{\alpha}$, 
each of them $c_2n$-rich,
so the bipartite graph has $m \ge c_2^{3/2} n^{\alpha +1}$ edges.  
We count the paths of length two between different $\widehat{f}_i$'s,
using that $k(A)\leq n$:
$$\#P_2 = \sum_{t\in k(A)} \binom{\deg(t)}{2}
\ge |k(A)|\binom{m/|k(A)|}{2}\ge c' n^{2\alpha +1}.$$
Hence at least $c''n^{2\alpha}$ pairs $(\widehat{f}_i,\widehat{f}_j)$ share 
$c''n$ common neighbors $t$ in this graph.
In other words, $c''n^{2\alpha}$ of the $\gamma_{ij}$ 
have a point in $C\times C$ for $c''n$ different $t$.  
         
It is possible that different $t$ give the same point $\gamma_{ij}(t)$, 
so these $\gamma_{ij}$ could have fewer than $c''n$ points in $C\times C$.
However, because $\deg \widehat{f}_i \le d$, 
this can happen for at most $d$ different $t$ at a time, 
so each $\gamma_{ij}$ will certainly be $(c''/d)n$-rich.  
Setting $c_3 = c''/d$ we are done.

\item We require the notion of the resultant of two polynomials to prove this;
for details see \cite{Cox05}.
Let $R(x,y)$ be the resultant with respect to $t$ 
(so considering $x,y$ as coefficients) 
of the two polynomials $x - \widehat{f}_i(t)$ and $y - \widehat{f}_j(t)$.
This is an irreducible polynomial of degree $\le 2d$ 
with the property that $R(x,y) = 0$ 
if and only if there is a $t$ 
such that $x = \widehat{f}_i(t)$ and $y = \widehat{f}_j(t)$;
in other words, $\gamma_{ij}$ is the algebraic curve $R(x,y) = 0$.
\end{enumerate}
\end{proof}

\subsection{Decomposing $\widehat{f}_i$}
\begin{lemma}
There is a subset $S$ of $c'n^{2\alpha -1}$ of the $\gamma_{ij}$ that all 
coincide as point sets, and such that the set $T$ of $\widehat{f}_i$ occurring 
in the first coordinate of a $\gamma_{ij}\in S$
has size $c_4n^{2\alpha -1}$.
\end{lemma}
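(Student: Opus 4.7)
The plan is to combine the Curve Lemma with the Reparametrization Lemma. First I would apply Lemma~\ref{lem:curve} to the $c_3 n^{2\alpha}$ irreducible algebraic curves $\gamma_{ij}$ of degree at most $2d$ that are $c_3 n$-rich on $C\times C$ (as given by Lemma~\ref{lem:gammas}): for $n$ large enough, the number of \emph{distinct} point sets among the $\gamma_{ij}(\mathbb{R})$ is at most $C_{CL}\cdot n$. Pigeonholing the $c_3 n^{2\alpha}$ parametric curves among these at most $C_{CL}\cdot n$ point sets yields a single image $\Gamma\subset\mathbb{R}^2$ that coincides with $\gamma_{ij}(\mathbb{R})$ for at least $(c_3/C_{CL})\,n^{2\alpha-1}$ pairs $(i,j)$; I would take $S$ to be this collection of pairs, which immediately delivers the first half of the statement.

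To extract the first coordinates, fix a primitive parametrization $(p(t),q(t))$ of $\Gamma$. By the Reparametrization Lemma, for every $(i,j)\in S$ there is a polynomial $\varphi_{ij}$ with $\widehat{f}_i=p\circ\varphi_{ij}$ and $\widehat{f}_j=q\circ\varphi_{ij}$. Then $\varphi_{ij}$ is a common inner polynomial of the pair $(\widehat{f}_i,\widehat{f}_j)$, and since $(f_i,f_j)\in\widehat{E}$ we arranged in the previous subsection that any such common inner polynomial is linear. Hence every $\varphi_{ij}$ is linear.

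The main obstacle is to show that the map $(i,j)\mapsto \widehat{f}_i$, restricted to $S$, has fibers of size bounded by a constant depending only on $d$. Fix $\widehat{f}\in T$. For any $(i,j)\in S$ with $\widehat{f}_i=\widehat{f}$, the equation $p\circ\varphi=\widehat{f}$ in the unknown linear polynomial $\varphi$ admits at most $\deg p\le d$ solutions: matching the leading coefficients leaves at most $\deg p$ choices for the leading coefficient of $\varphi$, after which the constant term is uniquely determined by the next equation. Each such $\varphi$ in turn determines a single candidate $\widehat{f}_j=q\circ\varphi$, and our standing assumption that at most $d$ of the $f_j$ coincide gives at most $d$ indices $j$ yielding this $\widehat{f}_j$. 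Combined with the analogous bound for the number of $i$ mapping to the given $\widehat{f}$, each element of $T$ arises as the first coordinate of at most a $d$-dependent constant number of pairs in $S$. Dividing the lower bound $|S|\ge (c_3/C_{CL})\,n^{2\alpha-1}$ by this constant gives $|T|\ge c_4 n^{2\alpha-1}$ for a suitable $c_4=c_4(c,d)$, as required.
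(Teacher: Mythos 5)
Your proof is correct and follows essentially the same route as the paper: the Curve Lemma plus pigeonholing produces the coinciding family $S$, and $T$ is shown to be large by bounding the number of pairs in $S$ whose first coordinate is a given $\widehat{f}_i$, using the standing assumption that at most $d$ of the $f_i$ coincide. The only difference is that you justify this multiplicity bound via the Reparametrization Lemma and a count of the linear solutions of $p\circ\varphi=\widehat{f}_i$, whereas the paper simply asserts that coinciding $(\widehat{f}_i,\widehat{f}_j)$ and $(\widehat{f}_i,\widehat{f}_k)$ force $\widehat{f}_j=\widehat{f}_k$; your version is, if anything, the more careful one.
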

\begin{proof}
Since the $\gamma_{ij}$ are irreducible and have degree $\leq 2d$, 
we can apply the Curve Lemma \ref{lem:curve}. 
Thus there exists $n_0$ such that for $n>n_0$, 
there can be at most $C_{CL}n$ distinct $c_3n$-rich curves on $C\times C$, 
so $c_3n^{2\alpha}/C_{CL}n = c'n^{2\alpha -1}$ of them must coincide.

Set $c_4 = c'/2d$. 
If fewer than $c_4n^{2\alpha -1}$ of the $\widehat{f}_i$ occurred among these 
coinciding $\gamma_{ij}$, 
then some $\widehat{f}_i$ would have to occur at least $d+1$ times, 
say in the first coordinate.
But if $(\widehat{f}_i, \widehat{f}_j)$ 
and $(\widehat{f}_i, \widehat{f}_k)$ coincide, 
then we must have $\widehat{f}_j = \widehat{f}_k$.
So we would have $d+1$ of the $\widehat{f}_i$ coinciding, 
hence also $d+1$ of the $f_i$, 
contradicting our Assumption after Lemma \ref{lem:nottoomanysame2D}.
\end{proof}

\begin{lemma}
   There are $c_4n^{2\alpha -1}$ $f_i$ with $$f_i(x) = p(a_i k(x) + b_i)$$ where $a_i, 
   b_i\in \mathbb{R}$ and $p\in \mathbb{R}[x]$.
\end{lemma}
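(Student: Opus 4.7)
The plan is to invoke the Reparametrization Lemma (Lemma~\ref{lem:repar}) on the coinciding curves in $S$, and then exploit the property, built into the construction of $\widehat{E}$, that each pair $(\widehat{f}_i,\widehat{f}_j)$ arising from an edge of $\widehat{E}$ shares no non-linear common inner polynomial. Concretely, I would fix a reference pair $(\widehat{f}_{i_0},\widehat{f}_{j_0})$ with $\gamma_{i_0 j_0}\in S$ and set $p:=\widehat{f}_{i_0}$. For every other $\gamma_{ij}\in S$, the curves $\gamma_{i_0 j_0}$ and $\gamma_{ij}$ coincide as point sets, so the Reparametrization Lemma supplies polynomials $p',q',\varphi,\varphi'\in\mathbb{R}[t]$ with
$$\widehat{f}_{i_0}=p'\circ\varphi,\quad \widehat{f}_{j_0}=q'\circ\varphi,\quad \widehat{f}_i=p'\circ\varphi',\quad \widehat{f}_j=q'\circ\varphi'.$$

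The crucial step is to argue that both $\varphi$ and $\varphi'$ are linear. Since $\varphi$ is a common inner polynomial of $\widehat{f}_{i_0}$ and $\widehat{f}_{j_0}$, and $(f_{i_0},f_{j_0})\in\widehat{E}$, the construction of $\widehat{E}$ forbids a non-linear $\varphi$. The same reasoning applied to the pair $(\widehat{f}_i,\widehat{f}_j)$ forces $\varphi'$ to be linear. With $\varphi$ linear (hence invertible over $\mathbb{R}$), the identity $p=p'\circ\varphi$ yields $p'=p\circ\varphi^{-1}$, and substituting back gives
$$\widehat{f}_i \;=\; p'\circ\varphi' \;=\; p\circ(\varphi^{-1}\circ\varphi').$$
The composition $\psi_i:=\varphi^{-1}\circ\varphi'$ is itself linear, say $\psi_i(t)=a_i t+b_i$, so $\widehat{f}_i(t)=p(a_i t+b_i)$ for every $\widehat{f}_i\in T$.

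Finally, from $f_i=\widehat{f}_i\circ k$ we obtain $f_i(x)=p(a_i k(x)+b_i)$; since distinct $\widehat{f}_i$ correspond to distinct $f_i$ and $|T|=c_4 n^{2\alpha-1}$, this produces the required number of decomposable $f_i$. The main obstacle I anticipate is consistency of the outer polynomial: the Reparametrization Lemma a priori outputs a possibly different $p'$ for each pair in $S$, and it is not immediately clear that they can all be replaced by the single $p=\widehat{f}_{i_0}$. The resolution is exactly the linearity of $\varphi$ forced by the no-common-non-linear-inner-function Assumption, which shows that every candidate $p'$ differs from $p$ only by a linear reparametrization that can be harmlessly absorbed into $\psi_i$.
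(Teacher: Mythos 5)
Your proposal is correct and follows essentially the same route as the paper: apply the Reparametrization Lemma to coinciding pairs in $S$, use the no-nonlinear-common-inner-function property of $\widehat{E}$ to force the reparametrizing maps to be linear, and then absorb the resulting linear discrepancies so that a single outer polynomial $p$ works for all of $T$. Your use of a fixed reference curve $\gamma_{i_0 j_0}$ instead of the paper's chaining of pairs is only a cosmetic difference, since all curves in $S$ coincide.
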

\begin{proof}
   By the Reparametrization Lemma~\ref{lem:repar}, for each coinciding pair of 
   curves $\gamma_{ij}$ and $\gamma_{kl}$ from $S$, we can find $p$, 
   $\varphi_i$, and $\varphi_k$ such that $$\widehat{f}_i = 
   p\circ\varphi_i~~~\mathrm{and}~~~\widehat{f}_k = p\circ\varphi_k.$$
Hence we have such decompositions for each pair of the $\widehat{f}_i\in T$.

The $\widehat{f}_i$ were constructed so that 
any pair corresponding to an edge of $\widehat{E}$ has no nonlinear common inner function.
That implies that the $\varphi_i$ are linear, hence invertible, 
which allows us to assume that all $\widehat{f}_i\in T$ can be decomposed using the same $p$.
Indeed, if $\widehat{f}_i = p \circ \varphi_i = q\circ \phi_i$ and $\widehat{f}_j = q\circ \phi_j$, 
then $q = p\circ (\varphi_i \circ \phi_i^{-1})$, 
so we can write $\widehat{f}_j = p\circ (\varphi_i \circ \phi_i^{-1}\circ\phi_j)$;
by repeatedly modifying the $\varphi_i$ this way we can reach all $\widehat{f}_k\in T$.

Write $\varphi_i(t) = a_it + b_i$;
then for the $c_4n^{2\alpha-1}$ $f_i = \widehat{f}_i\circ k$ with $\widehat{f}_i\in T$ we have 
$f_i = p\circ\varphi_i\circ k$.
\end{proof}

\subsection{Proof of Theorem \ref{thm:ER2}}
At this point we will apply the Generalized Line Lemma \ref{lem:genLine} with $\beta =0$
to obtain Theorem \ref{thm:ER2}.
Then we need $2\alpha -1 = 2/3$, so we set $\alpha = 5/6$.

Note that the $c_4n^{2/3}$ lines $u = \varphi_i(t) = a_it + b_i$ live on 
$k(A)\times p^{-1}(D)$, which is essentially an $n\times n$ cartesian product 
(both sets might be smaller than $n$, but we can just add arbitrary points to 
fill them out).  They are $c_1n$-rich there, since otherwise the $f_i$ couldn't be 
$c_1n$-rich.

We conclude that either $d+1$ of the lines $u = \varphi_i(t)$ are parallel, 
or $d+1$ are concurrent.
Otherwise, by Lemma~\ref{lem:genLine} with $\beta = 0$ 
there would be fewer than $C_{GLL}n^{2/3}$ lines.  
But we can take $\tilde{c}$ in Theorem~\ref{thm:ER2} to be large enough so that $c_4 > C_{GLL}$. 
Indeed, one can easily check that each $c_i$ was an increasing unbounded function of $c_{i-1}$.

By Lemma \ref{parallelcase} below, if $d+1$ of the lines are parallel, 
then $f$ has the additive form $f(x,y) = p(k(x)+l(y))$.
By Lemma \ref{concurrentcase} below, if $d+1$ of the lines are concurrent, 
then $f$ has the multiplicative form $f(x,y) = p(k(x)\cdot l(y))$.
That finishes the proof of Theorem \ref{thm:ER2}.

\subsection{Proof of Theorem~\ref{thm:ERbest}} \label{subsec:ER3Croot}

We will now use Corollary \ref{cor:croot}, 
instead of Lemma \ref{lem:genLine} as above,  
which will result in Theorem~\ref{thm:ERbest}.  

We start with $\alpha = 1/2 + \varepsilon$.
Then we end up with $c_4n^{2\alpha -1} = c_4n^{2\varepsilon}$ lines $u = a_i t + b_i$
which are $c_1n$-rich on an $n\times n$ cartesian product.
Certainly $c_4n^{2\varepsilon} > 2(d+1)n^{\varepsilon'}$ for some $\varepsilon'>0$, 
so by Corollary \ref{cor:croot} with $p = q = d+1$ 
either $d+1$ of the lines are parallel or $d+1$ are concurrent.

By Lemma \ref{parallelcase} below, 
the parallel case would give the additive form for $f$, 
and Lemma \ref{concurrentcase} below,
the concurrent case would give the multiplicative form for $f$.
That finishes the proof of Theorem~\ref{thm:ERbest}.

\subsection{The parallel case}\label{subsec:ER3para}
\begin{lemma}\label{parallelcase}
If $d+1$ of the lines $\varphi_i$ are parallel, 
then there is a polynomial $l(y)$ such that $f(x,y) = p(k(x)+l(y))$.
\end{lemma}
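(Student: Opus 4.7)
The plan is to factor $f(x, y)$ through $k(x)$ and then recognize the resulting bivariate polynomial as $p$ applied to $k(x) + l(y)$ for a suitable polynomial $l$. After relabeling, the $d+1$ parallel lines give $f(x, y_i) = p(a\,k(x) + b_i)$ for $i = 1, \ldots, d+1$ with common slope $a$; absorbing $a$ into $k$ (replacing $k$ by $ak$, which does not affect the form of the final conclusion) reduces this to $f(x, y_i) = p(k(x) + b_i)$. I would first apply Lemma~\ref{lem:alg}(1) with $E = \mathbb{R}(y)$ and $\varphi = k(x)$ to obtain the unique decomposition
\[ f(x, y) = \sum_{j=0}^{\deg k - 1} a_j(y, k(x))\, x^j, \]
with $a_j \in \mathbb{R}[y, k(x)]$ by part~(2). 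Since $f(x, y_i) = p(k(x) + b_i)$ depends on $x$ only through $k(x)$, uniqueness forces $a_j(y_i, t) \equiv 0$ in $t$ for every $j \geq 1$ and every $y_i$. Combined with the degree bound on $a_j$ in $y$ from Lemma~\ref{lem:alg}(2), sufficiently many such vanishings give $a_j \equiv 0$ for $j \geq 1$, so $f(x, y) = G(y, k(x))$ for some $G \in \mathbb{R}[y, s]$.

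Next I would identify $G$. Expand $G(y, s) = \sum_{m=0}^{e} T_m(y)\, s^m$ with $e = \deg p$. The identity $G(y_i, s) = p(s + b_i)$ together with Taylor expansion gives $T_m(y_i) = p^{(m)}(b_i)/m!$. The total degree bound $\deg f \leq d$ applied to $f(x, y) = \sum_m T_m(y)\, k(x)^m$ yields $\deg T_m \leq d - m \deg k$; in particular $T_e \equiv c_e$, the leading coefficient of $p$. Setting
\[ l(y) := \frac{T_{e-1}(y) - c_{e-1}}{e\, c_e}, \]
we get a polynomial with $l(y_i) = b_i$. It then suffices to verify $T_m(y) = p^{(m)}(l(y))/m!$ for each $m$; both sides are polynomials in $y$ that agree at the $d+1$ values $y_i$, and the degree bounds on $T_m$ together with the degree of $p^{(m)}(l(y))$ provide the ingredients needed to conclude equality. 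This yields $G(y, s) = p(s + l(y))$, hence $f(x, y) = p(k(x) + l(y))$.

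The hard part is making the degree comparisons tight enough that $d+1$ vanishings actually suffice in both steps: the naive bound $\deg_y a_j \leq d(\deg k + 1)$ from Lemma~\ref{lem:alg}(2), as well as bounds of the form $(e-m)\deg l$ appearing in the Taylor identity $T_m = p^{(m)}(l)/m!$, can exceed $d$ in general. The resolution is to enlarge the constant $c_2$ in the preceding invocation of Lemma~\ref{lem:genLine} to a sufficiently large constant depending only on $d$, thereby producing enough parallel lines to close the gap. This is legitimate because each $c_i$ in the chain of constants used in the proof is an increasing unbounded function of its predecessor, so enlarging $c_2$ only affects the ambient constants without altering the structure of the argument.
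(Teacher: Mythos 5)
Your proof follows the paper's argument essentially step for step: the same application of Lemma~\ref{lem:alg} to obtain the expansion $f(x,y)=\sum_m w_m(y)\,k(x)^m$, the same comparison of the coefficient of $k(x)^{\deg p-1}$ in the two expansions to define $l(y)$, and the same final interpolation at the values $y_i$. The degree-bookkeeping worry you flag at the end is also present (and silently glossed over) in the paper's own proof, and your remedy --- insisting on a larger constant (depending only on $d$) of parallel lines, which the surrounding argument supplies for free --- is consistent with how the lemma is actually invoked.
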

\begin{proof}
The lines can be written as $\varphi_i(t) = at + b_i$, so by modifying $k$ 
we can write $f_i(x) = p(k(x) + b_i)$, for $d+1$ different $f_i$.
 We use the following two polynomial expansions of $f_i(x) = f(x,y_i) = p(k(x) + 
 b_i)$:
$$\sum_{l=0}^N v_l \cdot (k(x)+b_i)^l = \sum_{m=0}^Nw_m(y_i)\cdot k(x)^m.$$
The first is immediate from $p(k(x) + b_i)$; the second requires a little more 
thought. 

By Lemma~\ref{lem:alg}, there is a unique expansion of the polynomial $f$ of the 
form $f(x,y) = \sum_{l=0}^{D-1} c_l(k(x),y)x^l$, where $D = \deg k$.
By the same lemma, we have a unique expansion $f_i(x) = \sum_{l=0}^{D-1} 
d_l(k(x))x^l$, so that we have $$\sum_{l=0}^{D-1} c_l(k(x),y_i)x^l = 
\sum_{l=0}^{D-1} d_l(k(x))x^l~~\Rightarrow~~c_l(k(x), y_i) = d_l(k(x)).$$
But since $f_i(x) = p(k(x) + b_i)$, uniqueness implies that $d_l =0$ for $l>0$, 
hence $c_l(k(x),y_i) = 0$ for $l>0$.
Since we have this for $d+1$ different $i$, it follows that $c_l(k(x), y) = 0$ 
for $l>0$, so $f(x,y) = c_0(k(x),y)$, which means there is an expansion $f(x,y) 
= \sum w_m(y) k(x)^m$.
Now plugging in $y=y_i$ gives the required expansion.

Comparing the coefficients of $k(x)^{N-1}$ in the two expansions above, we get 
$$v_{N-1} + (N-1)v_N b_i = w_{N-1}(y_i),$$
which implies that $b_i = \frac{1}{(N-1)v_N} (w_{N-1}(y_i) - v_{N-1})$. If we 
now define the polynomial
$$l(y) = \frac{1}{(N-1)v_N} (w_{N-1}(y) - v_{N-1}),$$
we have that for $d+1$ of the $y_i$ (note that $v_l$ and $w_m$ do not depend on 
the choice of $y_i$)
$$f(x,y_i) = p(k(x)+l(y_i)).$$
Since the degree of $f$ is $d$, this implies that $f(x,y) = p(k(x)+l(y))$.
\end{proof}

\subsection{The concurrent case}\label{subsec:ER3conc}
\begin{lemma}\label{concurrentcase}
If $d+1$ of the lines $\varphi_i$ are concurrent,
then there are polynomials $P(t)$, $K(x)$ and $L(y)$ 
such that $$f(x,y) = P(K(x)\cdot L(y)).$$
\end{lemma}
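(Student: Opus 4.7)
The plan is to mirror the proof of Lemma~\ref{parallelcase}, replacing the additive shift $k(x) + b_i$ with a multiplicative scaling $a_i K(x)$ after translating the concurrency point to the origin. Let $(t_0, u_0)$ denote the common point of the $d+1$ concurrent lines $u = \varphi_i(t) = a_i t + b_i$, so that $b_i = u_0 - a_i t_0$ for every $i$. Setting $K(x) := k(x) - t_0$ and $P(s) := p(s + u_0)$, a direct substitution gives
\[
f_i(x) = p(a_i k(x) + b_i) = p\!\left( a_i(k(x) - t_0) + u_0 \right) = P(a_i K(x)),
\]
so that, writing $P(s) = \sum_{l=0}^{N} v_l s^l$, we have $f_i(x) = \sum_{l=0}^N v_l a_i^l K(x)^l$.

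Next I would produce the analogue of the second expansion used in the parallel case. Applying Lemma~\ref{lem:alg} with $\varphi = K$ and repeating the uniqueness argument (the fact that $f(x, y_i) = f_i(x)$ depends on $x$ only through $K(x)$ for $d+1$ different $y_i$ forces the higher-$x$ coefficients in the unique expansion of $f$ to vanish identically in $y$) yields
\[
f(x, y) = \sum_{l=0}^{N} w_l(y)\, K(x)^l
\]
for polynomials $w_l \in \mathbb{R}[y]$. Comparing coefficients of $K(x)^l$ gives the key identities
\[
w_l(y_i) = v_l\, a_i^l \qquad (l = 0, 1, \ldots, N; \ i = 1, \ldots, d+1).
\]

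To extract $L(y)$ I will read off the coefficient of $K(x)^1$. Provided $v_1 \neq 0$, the identity $w_1(y_i) = v_1 a_i$ lets me define
\[
L(y) := \frac{w_1(y)}{v_1},
\]
so that $L(y_i) = a_i$ for each of the $d+1$ chosen indices. Then $f(x, y_i) = P(a_i K(x)) = P(K(x) L(y_i))$ holds at $d+1$ values of $y_i$, and a degree-in-$y$ argument identical to the one concluding Lemma~\ref{parallelcase} upgrades this to the polynomial identity $f(x, y) = P(K(x) \cdot L(y))$, giving the desired multiplicative form.

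The main obstacle is the case $v_1 = 0$, i.e., when $u_0$ is a critical point of $p$, in which the coefficient of $K(x)$ carries no information about $a_i$. To handle this I would set $\ell := \gcd\{l \geq 1 : v_l \neq 0\}$ and factor $P(s) = Q(s^\ell)$, so that $f_i(x) = Q(a_i^\ell K(x)^\ell)$. Replacing $K$ by $K^\ell$, $a_i$ by $a_i^\ell$, and $P$ by $Q$ reduces to a setting where the gcd of the nonzero exponents of the outer polynomial is $1$; a Bezout-style combination of the identities $w_l(y_i) = v_l a_i^l$ at two coprime exponents $l_1, l_2$ then expresses $a_i$ as a rational combination of values of the $w_l$, and the technical heart of the argument is showing that this rational combination is actually polynomial in $y$, which follows from the $d+1$ interpolation conditions together with the power structure built into the reduction.
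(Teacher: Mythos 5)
Your overall strategy is the same as the paper's: normalize the concurrency point away so that $f_i(x)=P(a_iK(x))$, expand $f$ in powers of $K(x)$ via Lemma~\ref{lem:alg}, compare coefficients to get $w_l(y_i)=v_la_i^l$, and read off $a_i$ from these identities. Your generic case $v_1\neq 0$ is correct and is exactly the paper's argument specialized to $M=1$ (the paper does not split into cases; it runs the gcd argument uniformly). The problems are both in the degenerate case, which you rightly identify as the technical heart but do not actually carry out. First, after reducing to $\gcd\{l\ge 1: v_l\neq 0\}=1$ you invoke ``two coprime exponents $l_1,l_2$''; a set of integers with gcd $1$ need not contain a coprime pair (e.g.\ $\{6,10,15\}$), so you must take a Bezout combination $M=\sum_m \mu_m m$ over \emph{all} the exponents with $w_m\neq 0$, which is what the paper does.

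Second, and more seriously, the Bezout combination $a_i^{M}=\prod_m\bigl(w_m(y_i)/v_m\bigr)^{\mu_m}$ involves negative $\mu_m$, so it defines a priori only a rational function $L(y)$ with $L(y_i)=a_i^M$. Your claim that polynomiality ``follows from the $d+1$ interpolation conditions together with the power structure'' is an assertion, not an argument: nothing in the interpolation data by itself forces the denominator of that product to divide the numerator. The paper's resolution is to postpone the issue entirely. It defines $L$ as that rational function, sets $P(t^M)=p(t)$ and $K=k^M$, verifies $f(x,y_i)=P(K(x)L(y_i))$ at the $d+1$ values and hence $f(x,y)=P(K(x)L(y))$ as an identity of rational functions, and only then concludes that $L$ must be a polynomial because $f$ is one (comparing coefficients of the linearly independent powers $K(x)^m$ shows $v_mL(y)^m$ is a polynomial for each relevant $m$, which a nonconstant denominator of $L$ in lowest terms would contradict). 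With these two repairs your outline coincides with the paper's proof.
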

\begin{proof}
The lines can be written as $\varphi_i(t) = a_it + b$, so by modifying $k$ 
we can write $f_i(x) = p(a_i\cdot k(x))$, for $d+1$ different $f_i$.
 We again use two polynomial expansions of $f_i(x) = f(x,y_i) =
p(a_i\cdot k(x))$:
$$\sum_{l=0}^N v_l \cdot (a_i\cdot k(x))^l =
\sum_{m=0}^Nw_m(y_i)\cdot k(x)^m.$$
Both are obtained in the same way as in the proof of 
Lemma~\ref{parallelcase}.

We cannot proceed exactly as before, since $a_i$ might occur
here only with exponents, and we cannot take a root of a polynomial.
But we can work around that as follows.
Define $M$ to be the greatest common divisor of all exponents $m$ for
which $w_m\neq 0$ in the second expansion;
then we can write $M$ as an integer linear combination of these $m$, say $M =
\sum \mu_m m$.
Comparing the coefficients of any $k(x)^m$ with $w_m\neq 0$ in the two
expansions above, we get
$$a_i^m = \frac{1}{v_m}w_m(y_i),$$
which tells us that
$$a_i^M = \prod (a_i^m)^{\mu_m} = L(y_i),$$
where $L(y)$ is a rational function.

If we define $P(s) = p(s^{1/M})$, or
equivalently $P(t^M) = p(t)$, then the definition of $M$ gives that $P(s)$ is a 
polynomial.
We also define $K(x) = k^M(x)$. Then
$$P(K(x)\cdot L(y_i)) = P(k^M(x)\cdot a_i^M) = p(k(x) \cdot
a_i) = f(x,y_i).$$
Since we have this for $d+1$ of the $y_i$, we get that $f(x,y) =
P(K(x) L(y))$.
This also tells us that $L(y)$ is in fact a polynomial, since
otherwise $f(x,y)$ could not be one.
\end{proof}

\section{Proof of Theorems~\ref{thm:main}, \ref{thm:ER3}, and \ref{thm:ER4}} 
\label{sec:er4d}

Suppose $w = f(x,y,z)$ contains $cn^{1+2\alpha}$ points of $A\times B\times 
C\times D$
and $|B|= |C| = n^{\alpha}$.
For Theorem \ref{thm:main} we have $\alpha = 1$; for the other two theorems
we will determine the right choice of $\alpha$ later.
Throughout we will use $d = \deg f$. All functions will be polynomials.
We will shorten or omit several of the proofs, 
because they are very similar to those in Section \ref{sec:erLess}.

\subsection{Constructing $\widehat{f}_{ij}$}
For each of the $n^{2\alpha}$ points $(y_i, z_j)\in B\times C$, 
we cut a fibre out of the solid:
$$w = f_{ij}(x) = f(x, y_i, z_j).$$
\begin{lemma}\label{nottoomanyidentical}
If at least $2dn^\alpha$ of the $f_{ij}$ are identical, then $f(x,y,z) = q(x)$.\\
In particular, the conclusion of 
Theorems \ref{thm:main}, \ref{thm:ER3}, and \ref{thm:ER4} holds.
\end{lemma}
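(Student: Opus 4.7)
The plan is to mirror the proof of Lemma \ref{lem:nottoomanysame2D} from the two-variable case, replacing the elementary one-variable root count with the Vanishing Lemma \ref{lem:vanish}. Specifically, I would form $F(y,z) := f(x,y,z) - q(x)$ and view it not as a polynomial in three real variables but as a polynomial in the two variables $y, z$ with coefficients in the field $K = \mathbb{R}(x)$ of rational functions in $x$. This passage from $\mathbb{R}[x,y,z]$ to $K[y,z]$ is the crucial shift of perspective, and it is precisely what allows the two-variable Vanishing Lemma to do all the work.

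Next I would verify the hypotheses of Lemma \ref{lem:vanish}. Clearly $\deg_{y,z} F \leq d$, since subtracting $q(x)$ does not change the $y$- or $z$-degree of $f$. Moreover, by assumption there are at least $2dn^\alpha$ pairs $(y_i, z_j) \in B\times C$ for which $f_{ij}(x) = f(x,y_i,z_j) = q(x)$; for each such pair, $F(y_i,z_j) = 0$ as an element of $K$. Since $|B| = |C| = n^\alpha$, this gives $2dm$ vanishing pairs with $m = n^\alpha$, matching exactly the threshold of Lemma \ref{lem:vanish} applied with the field $K$ and the subsets $B, C \subset \mathbb{R} \subset K$. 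The lemma then forces $F \equiv 0$ in $K[y,z]$, so $f(x,y,z) = q(x)$ identically in $\mathbb{R}[x,y,z]$.

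For the \emph{in particular} conclusion, I would observe that $q(x)$ admits the trivial decomposition $f(x,y,z) = g(k(x) + l(y) + m(z))$ with $g = q$, $k(t) = t$, and $l = m = 0$, which fits the additive form required in Theorems \ref{thm:main}, \ref{thm:ER3}, and \ref{thm:ER4}. The only real subtlety in the argument is the change of base ring that makes the Vanishing Lemma applicable; the constant $2d$ in the hypothesis of the present lemma has been chosen precisely to match the $2dm$ threshold of Lemma \ref{lem:vanish}, so no further case analysis or quantitative work should be needed.
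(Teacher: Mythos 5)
Your proposal is correct and is essentially identical to the paper's own proof: both pass to $F(y,z)=f(x,y,z)-q(x)$ viewed over $K=\mathbb{R}(x)$ and apply the Vanishing Lemma with $m=n^\alpha$, so the $2dn^\alpha$ vanishing pairs exactly meet the $2dm$ threshold. Your explicit verification of the degree bound and the trivial additive decomposition for the ``in particular'' clause is slightly more detailed than the paper's two-line proof, but the argument is the same.
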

\begin{proof}
Suppose that $f_{ij}(x) = q(x)$ at least $2dn^\alpha$ times.  
Then for $F(y,z) = f(x,y,z) - q(x)$ and $K = \mathbb{R}(x)$, 
the Vanishing Lemma \ref{lem:vanish} with $b = c = n^\alpha$ gives $F(y,z) = 0$.
\end{proof}\noindent
{\bf Assumption:} Throughout the rest of this proof we will assume that fewer 
than $2dn^\alpha$ of the $f_{ij}$ are identical.
\bigskip


Let $c_1 = c/2$.  Then at least $c_1n^{2\alpha}$ of the $f_{ij}$ are 
$c_1n$-rich on $A\times D$.  Otherwise $w=f(x,y,z)$ would contain fewer than 
$cn^{1+2\alpha}$ points of $A\times B\times C\times D$.

We construct a graph $G$ with the $c_1n^{2\alpha}$ $f_{ij}$ as vertices 
and edge set $E$ consisting of the pairs $(f_{ij}, f_{kl})$.
\begin{lemma}
 There is a subgraph of $G$ with edge set $\widehat{E}\subset E$ of size 
 $|\widehat{E}|\geq c_2n^{4\alpha}$, such that the following holds.
There is a polynomial $k(x)$ such that 
for all $(f_{ij}, f_{kl})\in \widehat{E}$ we can write
$$f_{ij} = \widehat{f}_{ij}\circ k,$$ $$f_{kl} = \widehat{f}_{kl}\circ k,$$
and $\widehat{f}_{ij}$ and $\widehat{f}_{kl}$ share no non-linear inner function.\\
The $\widehat{f}_{ij}$ are also $c_2n$-rich on $k(A)\times D$.
\end{lemma}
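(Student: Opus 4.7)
The plan is to mirror, almost verbatim, the construction of $\widehat{f}_i$ from Section~\ref{sec:erLess}, with the graph $G$ on the fibres $f_{ij}$ now playing the role of the graph on the $f_i$. First I would color each edge $(f_{ij}, f_{kl})\in E$ by the equivalence class of a common inner function of maximal degree, i.e.\ a polynomial $\varphi$ of largest possible degree such that $f_{ij}=g\circ\varphi$ and $f_{kl}=h\circ\varphi$, where $\varphi$ and $\phi$ are identified whenever $\phi(t)=a\varphi(t)+b$. By Lemma~\ref{lem:decomp}, any polynomial of degree at most $d$ has at most $2^d$ inequivalent decompositions, so at most $2^d$ colors meet any given vertex of $G$.

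Applying the Graph Lemma~\ref{lem:graph} with $N=c_1 n^{2\alpha}$ and color-bound $2^d$ then yields a monochromatic subgraph with at least $C_{GL}\cdot c_1^2\cdot n^{4\alpha}$ edges, which I take as $\widehat{E}$. All these edges share the same equivalence class of maximal inner function, so one can fix a single representative $k(x)$ and absorb the remaining affine freedom into the outer factor: for each $f_{ij}$ appearing in an edge of $\widehat{E}$, write $f_{ij}(x)=g_{ij}(a_{ij}k(x)+b_{ij})=\widehat{f}_{ij}(k(x))$, where $\widehat{f}_{ij}(t):=g_{ij}(a_{ij}t+b_{ij})$. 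This is the standard trick for passing from equivalence classes of decompositions to literal decompositions by a single $k$.

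The no-nonlinear-common-inner-function property of pairs $\widehat{f}_{ij},\widehat{f}_{kl}$ in $\widehat{E}$ is then forced by the maximality of $k$: if $\widehat{f}_{ij}=\alpha\circ\psi$ and $\widehat{f}_{kl}=\beta\circ\psi$ for some $\psi$ with $\deg\psi\geq 2$, then $\psi\circ k$ would be a common inner function of $f_{ij}$ and $f_{kl}$ of degree strictly greater than $\deg k$, contradicting the color assigned to their edge. For the richness claim, each value $t\in k(A)$ has at most $\deg k\leq d$ preimages in $A$, so if $f_{ij}$ is $c_1 n$-rich on $A\times D$ then $\widehat{f}_{ij}$ is at least $(c_1/d)n$-rich on $k(A)\times D$. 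Absorbing both $c_1/d$ and the Graph-Lemma constant into a freshly recycled $c_2$ gives the stated bounds on $|\widehat{E}|$ and on the richness simultaneously.

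The main obstacle is the normalization step: the Graph Lemma only guarantees a shared equivalence class of inner functions across edges, not a single literal $k(x)$ shared by all of them. One must therefore verify carefully that absorbing the affine discrepancies $t\mapsto a_{ij}t+b_{ij}$ into the $\widehat{f}_{ij}$ is consistent and preserves the maximality property on which the no-common-inner-function conclusion rests. Once this bookkeeping is done, the rest of the argument is a direct transcription of the two-variable case.
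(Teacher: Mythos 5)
Your proof is correct and follows exactly the route the paper intends: the paper omits the proof of this lemma, referring back to the identical argument for the two-variable case in Section~\ref{sec:erLess} (color edges by the equivalence class of a maximal common inner function, bound the colors per vertex by $2^d$ via Lemma~\ref{lem:decomp}, apply the Graph Lemma with $N=c_1n^{2\alpha}$, then normalize the affine freedom into the outer factors). Your explicit treatment of the normalization step and of the $(c_1/d)n$-richness of the $\widehat{f}_{ij}$ is, if anything, more careful than the paper's parenthetical remarks.
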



\subsection{Constructing $\gamma_{ijkl}$}
For the $c_2n^{4\alpha}$ pairs $\widehat{f}_{ij}, \widehat{f}_{kl}$ 
for which $(f_{ij},f_{kl})\in \widehat{E}$ 
we construct the curves
$$\widehat{\gamma}_{ijkl}(t) 
= \left(\widehat{f}_{ij}(t), \widehat{f}_{kl}(t)\right),$$
\begin{lemma}\text{ }
\begin{enumerate}
\item At least $c_3n^{4\alpha}$ of the $\gamma_{ijkl}$ 
are $c_3n$-rich on $D\times D$.
\item Each $\gamma_{ijkl}$ is an irreducible algebraic curve of degree at most $2d$.
\end{enumerate}
\end{lemma}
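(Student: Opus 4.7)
The plan is to mirror the proof of Lemma~\ref{lem:gammas} essentially verbatim, since the setup differs only in bookkeeping: the single index $i$ is replaced by a pair $(i,j)\in B\times C$, so powers of $n^{\alpha}$ in the two-variable argument become powers of $n^{2\alpha}$ here.

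For part~(1), I would construct the bipartite graph on $k(A)\cup\{\widehat{f}_{ij}\}$, connecting $t\in k(A)$ to $\widehat{f}_{ij}$ whenever $\widehat{f}_{ij}(t)\in D$. The hypothesis $|\widehat{E}|\ge c_2n^{4\alpha}$ forces at least of order $n^{2\alpha}$ distinct $\widehat{f}_{ij}$, each contributing $\ge c_2 n$ edges, giving $m$ of order $n^{2\alpha+1}$ edges in total. Applying Jensen's inequality with $|k(A)|\le n$, as in Lemma~\ref{lem:gammas}, bounds the number of paths of length two between distinct $\widehat{f}_{ij}$'s from below by a constant times $n^{4\alpha+1}$. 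Since there are at most of order $n^{4\alpha}$ such pairs (because the index set $B\times C$ has size $n^{2\alpha}$), a standard averaging step then produces $c''n^{4\alpha}$ pairs $(\widehat{f}_{ij},\widehat{f}_{kl})$ sharing $\ge c''n$ common $t$-values; the collision bound $\deg\widehat{f}_{ij}\le d$ (at most $d$ distinct $t$'s can produce the same point of $\gamma_{ijkl}$) then yields $c_3 n^{4\alpha}$ curves $\gamma_{ijkl}$ that are $c_3 n$-rich on $D\times D$.

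For part~(2), the resultant argument from Lemma~\ref{lem:gammas} applies without change: the resultant with respect to $t$ of $x-\widehat{f}_{ij}(t)$ and $y-\widehat{f}_{kl}(t)$ is an irreducible polynomial in $\mathbb{R}[x,y]$ of degree at most $2d$ whose zero set is exactly the image of $\gamma_{ijkl}$. The main obstacle I foresee is purely cosmetic, namely verifying that the constants $c_2, c', c'', c_3$ propagate correctly through the double-index counting and that the upper bound $|k(A)|\le n$ suffices for the Jensen step to yield the desired exponent $4\alpha$ on $n$ rather than something smaller; this drops out provided the number of distinct $\widehat{f}_{ij}$ is genuinely of order $n^{2\alpha}$, which is a direct consequence of $|\widehat{E}|\ge c_2 n^{4\alpha}$.
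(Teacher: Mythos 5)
Your proposal matches the paper's argument essentially verbatim: the paper proves part (1) by exactly this bipartite graph on $k(A)\cup\{\widehat{f}_{ij}\}$ with the Jensen/averaging count of $2$-paths (yielding $\#P_2 \geq c'n^{1+4\alpha}$ and hence $c''n^{4\alpha}$ pairs sharing $c''n$ common $t$'s, with the same $c_3=c''/d$ collision correction), and part (2) is handled by the same resultant computation carried over from Lemma~\ref{lem:gammas}. No gaps; the index-doubling bookkeeping you flag as the only concern works out exactly as you describe.
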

\begin{proof}
\begin{enumerate}
\item

We define a bipartite graph with vertex set $E = k(A) \cup \{\widehat{f}_{ij}\}$, 
and we connect $t\in k(A)$ with $f_{ij}$ if $f_{ij}(t)\in D$.
Then this graph has $m = c_2^{3/2}n^{1+2\alpha}$ edges.  
We count the 2-paths:
$$\#P_2 = \sum_{x\in k(A)} \binom{d(x)}{2}\geq 
|k(A)|\binom{m/|k(A)|}{2}\geq c' n^{1+4\alpha}.$$
Hence at least $c''n^{4\alpha}$ pairs $\widehat{f}_{ij},\widehat{f}_{kl}$ 
share $c''n$ common neighbors $t$ in this graph.
This implies that if $c_3 = c''/d$ then $c_3n^{4\alpha}$ of the $\gamma_{ijkl}$
have at least $c_3n$ point in $D\times D$.


\end{enumerate}
\end{proof}

\subsection{Decomposing $\widehat{f}_{ij}$}
\begin{lemma}
There is a subset of $c_4n^{4\alpha-1}$ of the $\gamma_{ijkl}$ that all coincide, 
and such that $c_4n^{3\alpha-1}$ of the $\widehat{f}_{ij}$ occur in these $\gamma_{ijkl}$.
\end{lemma}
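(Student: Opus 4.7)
The plan is to imitate the analogous decomposition step from Section~\ref{sec:erLess} almost verbatim, with the arithmetic adjusted to the four-dimensional counting. First I would apply the Curve Lemma~\ref{lem:curve} to the $c_3 n^{4\alpha}$ irreducible algebraic curves $\gamma_{ijkl}$, each of degree at most $2d$ and each $c_3n$-rich on $D\times D$ (which has size at most $n^2$). For $n$ sufficiently large this produces at most $C_{CL}n$ distinct point sets among the $\gamma_{ijkl}$, so a straightforward pigeonhole argument yields a subset $S$ of at least $(c_3/C_{CL})n^{4\alpha-1}$ curves that all coincide as point sets. Write $c' = c_3/C_{CL}$ for this implicit constant.

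Second, I would bound the number of distinct $\widehat{f}_{ij}$ that can appear among first coordinates of curves in $S$. The key coincidence observation is the one used in the 2D proof: if $\gamma_{ijkl}$ and $\gamma_{ijmn}$ both lie in $S$, so that they share first-coordinate polynomial $\widehat{f}_{ij}$, then necessarily $\widehat{f}_{kl}=\widehat{f}_{mn}$. Granting this, let $T$ be the set of distinct first-coordinate polynomials $\widehat{f}_{ij}$ that actually appear in $S$, and set $c_4 = c'/(2d)$. If $|T| < c_4 n^{3\alpha-1}$, then by pigeonhole some single $\widehat{f}_{ij}$ occurs as first coordinate of at least $|S|/|T| \ge 2d n^{\alpha}$ curves in $S$; by the coincidence observation the corresponding second coordinates $\widehat{f}_{kl}$ are all identical, hence so are $2dn^{\alpha}$ of the original $f_{kl}=\widehat{f}_{kl}\circ k$. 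This contradicts the Assumption following Lemma~\ref{nottoomanyidentical}. Since $c_4 \le c'$, the same constant $c_4$ works for both bounds in the statement.

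The main obstacle — and the only part that is not purely bookkeeping — is justifying the coincidence observation, since two polynomial parametrizations coinciding only as point sets need not agree pointwise in $t$. I would handle this via the Reparametrization Lemma~\ref{lem:repar}: coincidence of $\gamma_{ijkl}$ and $\gamma_{ijmn}$ produces polynomials $p,q$ and reparametrizations $\varphi_1,\varphi_2$ with $\widehat{f}_{ij}=p\circ\varphi_1=p\circ\varphi_2$, $\widehat{f}_{kl}=q\circ\varphi_1$, $\widehat{f}_{mn}=q\circ\varphi_2$. The first identity forces $\varphi_1$ and $\varphi_2$ to agree on an infinite set (on every $t$ outside the finite fibres of $p$), hence $\varphi_1=\varphi_2$ as polynomials, so $\widehat{f}_{kl}=\widehat{f}_{mn}$ as required. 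Everything else is routine pigeonhole identical to what is already done in Section~\ref{sec:erLess}.
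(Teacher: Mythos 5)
Your overall route is the paper's: apply the Curve Lemma to the $c_3n^{4\alpha}$ rich irreducible curves to force $c'n^{4\alpha-1}$ of them to coincide as point sets, then bound the number of repetitions of a single first coordinate by pigeonholing against the Assumption that fewer than $2dn^{\alpha}$ of the $f_{ij}$ are identical. The paper's own proof of this lemma is essentially a two-line pointer back to the analogous two-dimensional step, and your arithmetic ($|S|/|T|\ge 2dn^{\alpha}$, $c_4=c'/(2d)$) is the intended one.

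However, your justification of the ``coincidence observation'' is incorrect as written. From $\widehat{f}_{ij}=p\circ\varphi_1=p\circ\varphi_2$ you cannot conclude $\varphi_1=\varphi_2$: the identity $p(\varphi_1(t))=p(\varphi_2(t))$ only forces $\varphi_1(t)$ and $\varphi_2(t)$ to lie in the same fibre of $p$, and fibres of $p$ have up to $\deg p$ elements, not one. Concretely, with $p(s)=s^2$, $q(s)=s^3$, $\varphi_2=-\varphi_1$, the curves $(t^2,t^3)$ and $(t^2,-t^3)$ coincide as point sets while their second coordinates differ, so the observation itself can fail (the paper asserts the same claim without proof in the two-dimensional argument, so this is an inherited gap rather than a new one). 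The repair costs only a constant: since the pairs in $\widehat{E}$ share no nonlinear common inner function, $\varphi_1$ and $\varphi_2$ are linear, so $\sigma=\varphi_2\circ\varphi_1^{-1}$ is a linear map satisfying $p\circ\sigma=p$; over $\mathbb{R}$ there are at most two such maps (the identity and possibly one reflection about a symmetry point of $p$), and at most $\deg p\le d$ in any case. Hence a fixed first coordinate admits at most $d$ distinct second coordinates among the coinciding curves, and your pigeonhole still produces $2dn^{\alpha}$ identical $f_{kl}$ after shrinking $c_4$ by that factor. You should state and prove this bounded-multiplicity version rather than the all-identical version; otherwise the sentence ``the corresponding second coordinates are all identical'' does not follow from the Reparametrization Lemma.
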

\begin{proof}
By the Curve Lemma, for $n>n_0$, there can be at most $C_{CL}n$ distinct $c_3n$-rich 
curves on $D\times D$, so $c'n^{4\alpha -1}$ must coincide.
Setting $c_4 = c'/2dn$ gives that at least $c_4n^{3\alpha -1}$ of the $\widehat{f}_{ij}$ occur.

\end{proof}
\begin{lemma}
There are $c_4n^{3\alpha-1}$ pairs $(i,j)$ for which $$f_{ij}(x) = p(a_{ij} k(x) + 
b_{ij})$$ where $a_{ij}, b_{ij}\in \mathbb{R}$ and 
$p\in\mathbb{R}[x]$.
\end{lemma}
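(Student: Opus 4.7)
The plan is to imitate the proof of the analogous two-variable lemma in Section \ref{sec:erLess} (the lemma concluding Section 3.3), with the indices $i$ replaced by pairs $(i,j)$. The central tool is again the Reparametrization Lemma \ref{lem:repar}, applied to the large family of curves $\gamma_{ijkl}$ that coincide as point sets.

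First, I would take the subset $S$ of $c_4 n^{4\alpha - 1}$ coinciding $\gamma_{ijkl}$ from the previous lemma, and observe that every pair of them yields, via Lemma \ref{lem:repar}, polynomials $p, \varphi_{ij}, \varphi_{kl}$ with $\widehat{f}_{ij} = p \circ \varphi_{ij}$ and $\widehat{f}_{kl} = p \circ \varphi_{kl}$ (and similarly a $q$ for the second coordinate). Since by construction any pair $(\widehat{f}_{ij}, \widehat{f}_{kl})$ with $(f_{ij}, f_{kl}) \in \widehat{E}$ shares no nonlinear inner function, the $\varphi_{ij}$ must be linear, and hence invertible.

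Second, I would use invertibility of the $\varphi_{ij}$ to pass to a single common outer polynomial $p$ across all the $c_4 n^{3\alpha - 1}$ distinct $\widehat{f}_{ij}$ that appear in the first coordinate of some curve in $S$. Concretely, if $\widehat{f}_{ij} = p\circ \varphi_{ij} = q\circ \phi_{ij}$ and $\widehat{f}_{kl} = q\circ \phi_{kl}$, then $q = p \circ (\varphi_{ij}\circ \phi_{ij}^{-1})$, which rewrites $\widehat{f}_{kl}$ as $p$ composed with a linear polynomial; iterating this normalization across $S$ gives a single $p$ working for all of the $c_4 n^{3\alpha - 1}$ relevant $\widehat{f}_{ij}$. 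Writing $\varphi_{ij}(t) = a_{ij} t + b_{ij}$ and recalling $f_{ij} = \widehat{f}_{ij}\circ k$ yields exactly $f_{ij}(x) = p(a_{ij} k(x) + b_{ij})$ for these $c_4 n^{3\alpha - 1}$ pairs $(i,j)$.

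The only real subtlety, and what I expect to be the main point to check carefully, is the bookkeeping showing that the propagation of a single $p$ across all pairs in $S$ does not lose a factor in the count. This is handled as in the two-variable argument: since the set of $\widehat{f}_{ij}$ occurring in $S$ is connected in the natural sense (any two $\gamma$'s in $S$ coincide with each other, so one can chain the reparametrizations), the normalization sweeps through all $c_4 n^{3\alpha - 1}$ functions without loss, and the statement follows.
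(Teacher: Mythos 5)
Your proposal is correct and follows essentially the same route as the paper: apply the Reparametrization Lemma \ref{lem:repar} to the coinciding curves in $S$, use the absence of nonlinear common inner functions across edges of $\widehat{E}$ to force the $\varphi_{ij}$ to be linear, and then exploit invertibility to normalize everything to a single outer polynomial $p$. The paper's own proof is just a compressed version of this, deferring the details (including the propagation of $p$) to the two-variable argument in Section 3.3, which you have reproduced faithfully.
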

\begin{proof}
For each coinciding pair of curves $\gamma_{ijkl}$ and 
$\gamma_{abcd}$, by the Reparametrization Lemma we can write
$$\widehat{f}_{ij} = p\circ\varphi_{ij}~~~
\text{and}~~~\widehat{f}_{ab} = p\circ\varphi_{ab}.$$
By construction of the $\widehat{f}_{ij}$, the $\varphi_{ij}$ must be linear, 
which allows us to assume that all pairs use the same $p$.
Write $\varphi_{ij}(t) = a_{ij}t + b_{ij}$;
then for the $c_4n^{3\alpha-1}$ corresponding $f_{ij}$ we have 
$f_{ij} = p\circ\varphi_{ij}\circ k$.
\end{proof}

\subsection{Proof of Theorem \ref{thm:main}}
Here we set $\alpha = 1$, 
so we have $c_4n^2$ rich lines $u = \varphi_{ij}(t) = a_{ij}t + b_{ij}$
that are rich on the (essentially) $n\times n$ cartesian product $k(A)\times p^{-1}(D)$.

We claim that either $c_5n^2$ of the lines $u = \varphi_{ij}(t)$ are parallel, 
or $c_5n^2$ are concurrent, counting multiplicities.
By the Szemer\'edi-Trotter Theorem (\ref{thm:szem}), 
at most $C_{ST}n$ of the lines are distinct.  
By our Assumption after Lemma \ref{nottoomanyidentical}, 
fewer than $2dn$ are identical.
This implies that for some $c'$ we can split the lines 
into $c'n$ classes of size at least $c'n$, 
such that within each class the lines are identical, 
and between the classes the lines are distinct. 

We take a representative of each class and 
apply the Line Lemma \ref{lem:line} to these $c'n$ representatives, 
telling us that $c''n$ are parallel or $c''n$ are concurrent.
Taking all of the corresponding classes together gives $(c''\cdot c')n^2$ lines 
that are all parallel or all concurrent.

By Lemma \ref{parallelcase4d} below, we only need $2dn$ lines parallel, 
to show that $f$ has the additive form $f(x,y,z) = p(k(x)+l(y)+m(z))$,
so $c''c'n^2$ will certainly suffice.
Similarly, by Lemma \ref{parallelcase4d}, if $c''c'n^2$ of the lines are 
concurrent, then $f$ has the multiplicative form $f(x,y,z) = p(k(x)\cdot 
l(y)\cdot m(z))$.
That finishes the proof of Theorem \ref{thm:main}.

\subsection{Proof of Theorem \ref{thm:ER3}}
We have $c_5n^{3\alpha-1}$ $c_5n$-rich lines, 
for an $\alpha$ to be determined below.  
Many of these lines may coincide,
so we split them into $n^\beta$ classes of coinciding lines.  
The average size of a class is then $c_5n^{3\alpha - 1 - \beta}$,
so for some $c'>0$ and $\varepsilon>0$ 
we can find a subset of $c' n^\beta$ classes 
that all have size at least 
$c'n^{3\alpha - 1 - \beta - \varepsilon}$.

To apply Lemmas \ref{parallelcase4d} and \ref{concurrentcase4d}
and finish the proof, 
we will need $2dn^\alpha$ lines that are all parallel or concurrent.
To obtain these we need 
$\frac{2d}{c'}n^{\alpha - (3\alpha - 1 - \beta - \varepsilon)} 
= \frac{2d}{c'}n^{1+\beta+\varepsilon -2\alpha}$ 
representatives of the coinciding classes 
that are all parallel or concurrent, 
since each class has size at least $c'n^{3\alpha - 1 - \beta - \varepsilon}$.

To get these representatives using Lemma \ref{lem:genLine}, 
we need 
$$c'n^{\beta} \geq 
\frac{2d}{c'}n^{2/3+ (1+\beta+\varepsilon -2\alpha)/3},$$
for which it suffices to have
$3\beta-\varepsilon \geq 2 +1 +\beta +\varepsilon -2\alpha$, 
or $\beta \geq 3/2 +\varepsilon - \alpha$.

On the other hand, if any of the $n^\beta$ classes 
contains at least $2dn^{\alpha}$ lines,
then also $2dn^\alpha$ of the $f_{ij}$ would be identical, 
contradicting our assumption after Lemma \ref{nottoomanyidentical}.
Hence all classes are smaller than $2dn^\alpha$,
which implies that 
$$n^\beta \geq \frac{c_5}{2d}n^{2\alpha-1},$$ 
hence $\beta \geq 2\alpha -1-\varepsilon$.	

The second inequality for $\beta$ will imply the first if
$$2\alpha - 1 -\varepsilon\geq 3/2 +\varepsilon - \alpha,$$
hence $\alpha = 5/6 + \varepsilon$ will do.

\subsection{Proof of Theorem \ref{thm:ER4}}
For Theorem \ref{thm:ER4}, we do the same as for Theorem \ref{thm:ER3}, 
except that instead of Lemma \ref{lem:genLine} 
we apply Corollary \ref{cor:croot}.
To get the right number of parallel or concurrent lines,
we set $p=q=2dn^{1+\beta+\varepsilon -2\alpha}$ in the Corollary,
so we require 
$$c'n^\beta > (p+q)n^{\varepsilon'} = 4dn^{1+\beta+\varepsilon -2\alpha+\varepsilon'}$$
for some $\varepsilon'$.
That will hold if $\beta > 1+\beta+\varepsilon -2\alpha+\varepsilon'$,
or $\alpha \geq 1/2 + \varepsilon/2 + \varepsilon'/2$, 
which is satisfied for $\varepsilon' = \varepsilon$ 
and $\alpha = 1/2+\varepsilon$ as in Theorem \ref{thm:ER4}.

\subsection{The parallel case}\label{subsec:ER4para}
\begin{lemma}\label{parallelcase4d}
If $2dn^\alpha$ of the lines $\varphi_{ij}$ are parallel,
then there is a polynomial $r(y,z)$ such that $f(x,y,z) = p(k(x)+r(y,z))$.
\end{lemma}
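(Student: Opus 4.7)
The plan is to adapt Lemma \ref{parallelcase} from two to three variables, replacing the fact that a univariate polynomial of degree $d$ with $d+1$ roots is zero by two applications of the Vanishing Lemma \ref{lem:vanish} over $\mathbb{R}(x)$. Since the $2dn^\alpha$ parallel lines share a common slope $a$, absorbing $a$ into $k$ lets us write $f_{ij}(x)=p(k(x)+b_{ij})$ for $2dn^\alpha$ distinct pairs $(y_i,z_j)\in B\times C$.

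The next step is to invoke Lemma \ref{lem:alg} over $E=\mathbb{R}(y,z)$ with $\varphi=k$ to obtain the unique representation $f(x,y,z)=\sum_{l=0}^{D-1} c_l(k(x),y,z)\,x^l$, where $D=\deg k$. Applying the same lemma to each $f_{ij}$ and using that $p(k(x)+b_{ij})$ contains no positive powers of $x$ outside $k$, uniqueness forces $c_l(k(x),y_i,z_j)=0$ for every $l>0$ and each of the $2dn^\alpha$ pairs. Viewing $c_l(k(x),y,z)$ as a polynomial in $y,z$ over $\mathbb{R}(x)$, the Vanishing Lemma with $m=n^\alpha$ then gives $c_l\equiv 0$ for $l>0$, hence an expansion
\[
f(x,y,z)=\sum_{m=0}^N w_m(y,z)\,k(x)^m.
\]

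Comparing the coefficient of $k(x)^{N-1}$ in the two expansions $\sum_l v_l(k(x)+b_{ij})^l=\sum_m w_m(y_i,z_j)\,k(x)^m$ yields $v_{N-1}+Nv_N b_{ij}=w_{N-1}(y_i,z_j)$, so setting
\[
r(y,z):=\frac{w_{N-1}(y,z)-v_{N-1}}{Nv_N}
\]
gives $b_{ij}=r(y_i,z_j)$. Hence $f(x,y_i,z_j)=p(k(x)+r(y_i,z_j))$ on all $2dn^\alpha$ pairs, and a final application of the Vanishing Lemma to $F(y,z):=f(x,y,z)-p(k(x)+r(y,z))\in\mathbb{R}(x)[y,z]$ forces $F\equiv 0$, giving $f(x,y,z)=p(k(x)+r(y,z))$ as required.

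The main obstacle is the bookkeeping of degrees: Lemma \ref{lem:vanish} as stated needs the polynomial in $y,z$ to have degree at most $d$ so that $2dm$ zeros suffice, whereas $c_l$ and $r$ only come with a degree bound polynomial in $d$ via Lemma \ref{lem:alg}(2) (iterated to handle both rational variables $y$ and $z$). This is absorbed either by applying the Vanishing Lemma with a constant $C(d)$ in place of $d$, costing only an extra $d$-dependent factor in the required zero count, or by building slightly more slack into the earlier combinatorial counts; the cascade of constants $c_i$ in the preceding sections has plenty of room to accommodate this, so the overall argument goes through with no change in the exponents of $n$.
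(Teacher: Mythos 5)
Your proposal is correct and follows essentially the same route as the paper's proof: the unique expansion from Lemma~\ref{lem:alg} over $\mathbb{R}(y,z)$, the Vanishing Lemma to kill the $c_l$ for $l>0$, comparison of the $k(x)^{N-1}$ coefficients to define $r(y,z)$, and a final Vanishing Lemma application to $f(x,y,z)-p(k(x)+r(y,z))$. Your coefficient $v_{N-1}+Nv_Nb_{ij}$ is in fact the correct binomial expansion (the paper writes $(N-1)v_N$, an immaterial slip), and your remark on the degree bookkeeping for the Vanishing Lemma is a legitimate point that the paper glosses over but which, as you say, only affects $d$-dependent constants.
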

\begin{proof}
We can write $f_{ij}(x) = p(k(x) + b_{ij})$.
 We use the following two polynomial expansions of 
$f_{ij}(x) = f(x,y_i,z_j) =  p(k(x) + b_{ij})$:
$$\sum_{l=0}^N v_l \cdot (k(x)+b_{ij})^l = \sum_{m=0}^Nw_m(y_i,z_j)\cdot k(x)^m.$$
The first is immediate from $p(k(x) + b_{ij})$; the second requires a little more thought. 

By Lemma~\ref{lem:alg}, there is a unique expansion of the polynomial $f$ of the 
form $f(x,y,z) = \sum_{l=0}^{D-1} c_l(k(x),y, z)x^l$, where $D = \deg k$.
By the same lemma, we have a unique expansion $f_{ij}(x) = \sum_{l=0}^{D-1} 
d_l(k(x))x^l$, so that we have $$\sum_{l=0}^{D-1} c_l(k(x),y_i, z_j)x^l = 
\sum_{l=0}^{D-1} d_l(k(x))x^l~~\Rightarrow~~c_l(k(x), y_i,z_j) = d_l(k(x)).$$
But since $f_{ij}(x) = p(k(x) + b_{ij})$, uniqueness implies that $d_l =0$ for 
$l>0$, hence $c_l(k(x),y_i,z_j) = 0$ for $l>0$. 
We have this for every $y_i,z_j$ such that $\varphi_{ij}$ is one of the parallel lines.

Then we have $2dn^\alpha$ zeroes of $c_l(k(x),y,z)$, 
so applying the Vanishing Lemma with $|B|=|C|=n^\alpha$  
gives $c_l(k(x), y,z) = 0$ for $l>0$.
Thus $f(x,y,z) = c_0(k(x),y,z)$, 
which means there is an expansion $f(x,y,z) = \sum w_m(y,z) k(x)^m$.
Now plugging in $y=y_i, z = z_j$ gives the expansion required above.

Comparing the coefficients of $k(x)^{N-1}$ in the two expansions above, we get 
$$v_{N-1} + (N-1)v_N b_{ij} = w_{N-1}(y_i,z_j),$$
which implies that $b_{ij} = \frac{1}{(N-1)v_N} (w_{N-1}(y_i,z_j) - v_{N-1})$. If we now define the polynomial
$$r(y,z) = \frac{1}{(N-1)v_N} (w_{N-1}(y,z) - v_{N-1}),$$
we have that for our $2dn^\alpha$ pairs $(y_i, z_j)$ (note that $v_l$ and $w_m$ do 
not depend on the choice of pair)
$$f(x,y_i,z_j) = p(k(x)+r(y_i,z_j)).$$
By the Vanishing Lemma with $|B|=|C|=n^\alpha$, 
applied to $F(y,z) = f(x,y,z) - p(k(x)+r(y,z))$ over $K = \mathbb{R}(x)$,
we get the desired equality $f(x,y,z) = p(k(x)+r(y,z))$.
\end{proof}\noindent

\begin{lemma}
 There are polynomials $l$ and $m$ such that 
$$f(x,y,z) = p(k(x) + l(y) + m(z)).$$
\end{lemma}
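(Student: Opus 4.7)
The plan is to show that the bivariate polynomial $r(y,z)$ supplied by the preceding lemma splits as $l(y) + m(z)$, by re-running the argument of Section~\ref{sec:er4d} with a different distinguished variable and then combining the two resulting decompositions algebraically.

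Since the hypotheses of Theorem~\ref{thm:main} are completely symmetric in $x$, $y$, and $z$ (all three sets have size $n$), the entire machinery of the previous subsections applies equally well if one slices into fibres $g_{ij}(y) = f(x_i, y, z_j)$ rather than $f_{ij}(x) = f(x, y_i, z_j)$. Re-running the whole pipeline (extraction of common inner functions, the curves $\gamma$, Curve Lemma, Reparametrization Lemma, Line Lemma, and Vanishing Lemma) produces either a parallel-case conclusion $f(x,y,z) = p_2(k_2(y) + r_2(x,z))$ or a concurrent-case conclusion $f(x,y,z) = p_2(k_2(y) \cdot r_2(x,z))$.

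In the parallel sub-case, equating $p(k(x) + r(y,z)) = p_2(k_2(y) + r_2(x,z))$, applying $\partial_x$ and $\partial_y$ to both sides, and dividing to cancel the outer-derivative factors $p'$ and $p_2'$, yields the polynomial identity
\[ k'(x) \cdot k_2'(y) = \partial_y r(y,z) \cdot \partial_x r_2(x,z). \]
The left-hand side has no $z$; since the $z$-degree of a product of nonzero polynomials equals the sum of the individual $z$-degrees, both $\partial_y r(y,z)$ and $\partial_x r_2(x,z)$ must be independent of $z$, giving $r(y,z) = l(y) + m(z)$ directly. The degenerate branches where $k$ or $k_2$ is constant collapse $f$ to an essentially two-variable polynomial, to which Theorem~\ref{thm:ER} applies and delivers the sum form immediately.

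In the concurrent sub-case, freezing a generic $y_0$ turns the two expressions into a bivariate polynomial in $(x,z)$ that is simultaneously of additive Elekes-R\'onyai form $p(k(x)+r(y_0,z))$ and of ``multiplicative-by-constant'' form $p_2(k_2(y_0)\,r_2(x,z))$; a decomposition argument in the spirit of Lemma~\ref{lem:repar} then forces $r_2(x,z)$ itself to be either additively or multiplicatively separable. Substituting each possibility back into $f = p(k(x)+r(y,z))$ and tracking the $y$-dependence rules out the multiplicative possibility (a mixed additive-and-multiplicative presentation of $f$ can only hold in a degenerate situation, by a direct degree comparison), while in the additive possibility $r_2 = a(x)+b(z)$ the same tracking forces $k_2$ to be constant, once again reducing $f$ to a two-variable polynomial of sum form. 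The main obstacle I anticipate is precisely this last case analysis: ruling out the mixed additive-multiplicative scenario cleanly, and verifying that every degenerate branch (where $k$, $k_2$, or one of the inner polynomials becomes constant) nevertheless delivers the desired conclusion $f(x,y,z) = p(k(x) + l(y) + m(z))$.
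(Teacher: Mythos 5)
Your core argument in the parallel sub-case is exactly the paper's: re-run the whole pipeline with $y$ as the distinguished variable to obtain a second decomposition $f(x,y,z)=p_2(k_2(y)+r_2(x,z))$, equate the two expressions for $f_x/f_y$ to get $k'(x)\,k_2'(y)=r_y(y,z)\,(r_2)_x(x,z)$, observe that the left-hand side is free of $z$ so both factors on the right are too, and integrate to get $r(y,z)=l(y)+m(z)$. Where you diverge is in also trying to handle the possibility that the re-run with $y$ distinguished lands in the concurrent (multiplicative) case; the paper simply asserts that the swapped decomposition is again additive and never raises this branch. You are right that the parallel-or-concurrent dichotomy for the new family of lines is a priori a separate coin flip, so flagging it is a legitimate improvement in care---but your resolution of that branch is not a proof: ``a decomposition argument in the spirit of Lemma~\ref{lem:repar} then forces $r_2$ to be separable'' and ``by a direct degree comparison'' are placeholders where the actual work would have to go, and you say as much yourself. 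One further caution: your appeal to the full symmetry of the hypotheses in $x$, $y$, $z$ is literally valid only for Theorem~\ref{thm:main}; this lemma is also invoked for Theorems~\ref{thm:ER3} and~\ref{thm:ER4}, where $|A|=n$ but $|B|=n^{\alpha}$ with $\alpha<1$, so the re-run with $y$ distinguished needs its parameters rechecked rather than quoted by symmetry. For the branch the paper actually proves, your proposal matches it; the extra material is a sketch of a case the paper leaves implicit.
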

\begin{proof}
By applying the above with the roles of $x$ and $y$ swapped, we can also write $f(x,y,z) = P(K(y) + R(x,z))$. 
Then we calculate the quotient $f_x/f_y$ (using the notation $f_x = \partial f/\partial x)$ for both forms,
$$\frac{f_x}{f_y} = \frac{k'(x)}{r_y(y,z)} = \frac{R_x(x,z)}{K'(y)},$$
which tells us that $r_y(y,z)$ (and $R_x(x,z)$) is independent of $z$.  
Integrating with respect to $y$ then gives that $r(y,z) = l(y) + m(z)$, which 
proves our claim.
\end{proof}

\subsection{The concurrent case}\label{subsec:ER4conc}
\begin{lemma}\label{concurrentcase4d}
If $2dn^\alpha$ of the lines $\varphi_{ij}$ are concurrent,
There are polynomials $P(t)$, $K(x)$ and $R(y,z)$ such that 
$$f(x,y,z) = P(K(x)\cdot R(y,z)).$$
\end{lemma}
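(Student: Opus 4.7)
The plan is to mirror the structure of the proof of Lemma~\ref{parallelcase4d}, but incorporate the $\gcd$-trick from the two-variable concurrent case (Lemma~\ref{concurrentcase}) to handle the fact that coefficient comparison will give us $a_{ij}^m$ rather than $a_{ij}$ itself.

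First I would normalize: since the $\varphi_{ij}$ are concurrent, after translating and absorbing a constant into $k$, I may write $\varphi_{ij}(t)=a_{ij}t$, so that $f_{ij}(x)=p(a_{ij}k(x))$ for each of the $2dn^{\alpha}$ concurrent pairs $(y_i,z_j)$. Next, exactly as in the proof of Lemma~\ref{parallelcase4d}, I would set up two expansions
\[
\sum_{l=0}^{N} v_l\bigl(a_{ij}k(x)\bigr)^l \;=\; \sum_{m=0}^{N} w_m(y_i,z_j)\,k(x)^m,
\]
where the second expansion comes from the unique decomposition $f(x,y,z)=\sum_{l=0}^{D-1} c_l(k(x),y,z)x^l$ of Lemma~\ref{lem:alg}. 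Since $f_{ij}(x)=p(a_{ij}k(x))$ is already a polynomial in $k(x)$ alone, uniqueness forces $c_l(k(x),y_i,z_j)=0$ for $l>0$ at each of the $2dn^{\alpha}$ pairs, and the Vanishing Lemma~\ref{lem:vanish} then gives $c_l(k(x),y,z)\equiv 0$ for $l>0$, so $f(x,y,z)=\sum w_m(y,z)k(x)^m$ with $w_m\in\mathbb{R}[y,z]$.

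Comparing the two expansions yields $w_m(y_i,z_j)=v_m\,a_{ij}^{m}$ for every $m$ with $v_m\neq 0$. Following the two-variable argument, let $M=\gcd\{m:v_m\neq 0\}$ and fix an integer relation $M=\sum \mu_m m$. Then set
\[
R(y,z) \;=\; \prod_{m\,:\,v_m\neq 0}\Bigl(\tfrac{w_m(y,z)}{v_m}\Bigr)^{\mu_m}, \qquad K(x)=k(x)^{M}, \qquad P(t)=p(t^{1/M}),
\]
where $P$ is a genuine polynomial because $M$ divides every exponent occurring in $p$, and $R$ is a priori a rational function on $\mathbb{R}^2$ with $R(y_i,z_j)=a_{ij}^{M}$ for all concurrent pairs. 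With these definitions,
\[
P\bigl(K(x)R(y_i,z_j)\bigr) \;=\; p\bigl(k(x)a_{ij}\bigr) \;=\; f(x,y_i,z_j)
\]
for each of the $2dn^{\alpha}$ pairs.

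To upgrade this from an equality on pairs to a polynomial identity, I would clear denominators: let $D(y,z)$ be a common denominator of $R$, so that $D^{\deg P}\cdot P(K(x)R(y,z))$ is a polynomial in $\mathbb{R}(x)[y,z]$ of bounded total degree. The difference $F(y,z)=D^{\deg P}\cdot\bigl(f(x,y,z)-P(K(x)R(y,z))\bigr)\in\mathbb{R}(x)[y,z]$ has degree depending only on $d$ and vanishes at our $2dn^{\alpha}$ pairs, so Lemma~\ref{lem:vanish} over $K=\mathbb{R}(x)$ with $|B|=|C|=n^{\alpha}$ forces $F\equiv 0$, giving $f(x,y,z)=P(K(x)R(y,z))$ identically. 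Finally, since $f$ and $P$ are polynomials, $R$ itself must be a polynomial, as otherwise $P(K(x)R(y,z))$ could not lie in $\mathbb{R}[x,y,z]$.

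The main obstacle I expect is a purely notational one: keeping the bookkeeping clean around the $\mu_m$ (which may be negative) so that $R$ is well-defined as a rational function with $R(y_i,z_j)=a_{ij}^M$, and verifying that the cleared-denominator polynomial has degree controlled solely by $d$ so that the Vanishing Lemma genuinely applies. Everything else is a direct translation of the concurrent-case argument of Lemma~\ref{concurrentcase} into two $y,z$-variables via the Vanishing Lemma, exactly as Lemma~\ref{parallelcase4d} did for the additive case. A subsequent lemma (parallel to the one after Lemma~\ref{parallelcase4d}, computing $f_x/f_y$ and swapping roles) will then split $R(y,z)=L(y)M(z)$ and complete the conclusion of Theorem~\ref{thm:ER4}.
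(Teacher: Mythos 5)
Your proposal is correct and follows essentially the same route as the paper: the same two expansions, the same $\gcd$-trick with $M=\sum\mu_m m$ to define $R(y,z)=\prod(w_m/v_m)^{\mu_m}$, $K=k^M$, $P(t)=p(t^{1/M})$, and the same final application of the Vanishing Lemma over $\mathbb{R}(x)$ to the numerator of $f-P(KR)$. The only differences are expository (you spell out the clearing of denominators and the re-derivation of the expansion $f=\sum w_m(y,z)k(x)^m$, which the paper delegates to the parallel case).
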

\begin{proof}
We can write $f_{ij}(x) = p(a_{ij}\cdot k(x))$.
We again use two polynomial expansions of 
$f_{ij}(x) = f(x,y_i,z_j) =p(a_{ij}\cdot k(x))$:
$$\sum_{l=0}^N v_l \cdot (a_{ij}\cdot k(x))^l =
\sum_{m=0}^Nw_m(y_i,z_j)\cdot k(x)^m.$$
Both are obtained in the same way as in the proof of Claim
\ref{parallelcase4d}.

We cannot proceed exactly as before, 
since $a_{ij}$ might only occur here with exponents, 
and we cannot take a root of a polynomial.
But we can work around that as follows.
Define $M$ to be the greatest common divisor of all exponents $m$ for
which $w_m\neq 0$ in the second expansion;
then we can write $M$ as an integer linear combination of these $m$, 
say $M =\sum \mu_m m$.
Comparing the coefficients of any $k(x)^m$ 
with $w_m\neq 0$ in the two expansions above, we get
$$a_{ij}^m = \frac{1}{v_m}w_m(y_i,z_j),$$
which tells us that
$$a_{ij}^M = \prod (a_{ij}^m)^{\mu_m} = R(y_i,z_j),$$
where $R(y,z)$ is a rational function.

If we define $P(s) = p(s^{1/M})$, or equivalently $P(t^M) = p(t)$, 
then the definition of $M$ gives that $P(s)$ is a polynomial.
We also define $K(x) = k^M(x)$. 
Then for each of the $2dn^\alpha$ pairs $y_i,z_j$ we have
$$P(K(x)\cdot R(y_i,z_j)) = P(k^M(x)\cdot a_{ij}^M) = p(k(x) \cdot a_{ij}) = f(x,y_i,z_j).$$
Applying the Vanishing Lemma with $|B|=|C|=n^\alpha$ over $\mathbb{R}(x)$ 
to the numerator of $f(x,y,z) - P(K(x)R(y,z))$, 
we get that $f(x,y,z) = P(K(x) R(y,z))$.
This also tells us that $R(y,z)$ is in fact a polynomial, 
since otherwise $f(x,y,z)$ could not be one.
\end{proof}
\noindent
\begin{lemma}
 There are polynomials $L$ and $M$ such that $f(x,y,z) = P(K(x) \cdot L(y) \cdot  M(z))$.
\end{lemma}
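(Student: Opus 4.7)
The plan is to mirror the strategy used in the analogous paragraph following Lemma~\ref{parallelcase4d}, adapted from the additive to the multiplicative setting. First, by running the proof of the concurrent case with the roles of $x$ and $y$ interchanged, I would obtain a dual expression
$$f(x,y,z) = \tilde{P}\bigl(\tilde{K}(y) \cdot \tilde{R}(x,z)\bigr)$$
for some polynomials $\tilde{P}, \tilde{K}, \tilde{R}$. Here, as in the additive case, we use that the Section~\ref{sec:er4d} argument applies after swapping $x$ and $y$, and that the \emph{multiplicative} alternative must hold, since $f$ is already in multiplicative form from Lemma~\ref{concurrentcase4d}.

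Next, I would compute $f_x/f_y$ using both forms. From $f = P(K(x)R(y,z))$ we get $f_x/f_y = K'(x)R(y,z)/(K(x)R_y(y,z))$, and from the dual form we get $f_x/f_y = \tilde{K}(y)\tilde{R}_x(x,z)/(\tilde{K}'(y)\tilde{R}(x,z))$. Equating and rearranging yields
$$
\frac{R_y(y,z)\,\tilde{K}(y)}{R(y,z)\,\tilde{K}'(y)} \;=\; \frac{K'(x)\,\tilde{R}(x,z)}{K(x)\,\tilde{R}_x(x,z)}.
$$
The left-hand side depends only on $(y,z)$ and the right-hand side only on $(x,z)$, so each equals some rational function $\phi(z)$ of $z$ alone. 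This gives the key identity
$$R_y(y,z)\,\tilde{K}(y) \;=\; \phi(z)\,R(y,z)\,\tilde{K}'(y).$$

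To conclude that $R(y,z) = L(y)\,M(z)$, I would compare leading coefficients in $y$ in this identity: with $n = \deg_y R$ and $k = \deg_y \tilde{K}$ one obtains $n = k\,\phi(z)$, forcing $\phi = n/k$ to be a rational constant. Hence $k\,R_y/R = n\,\tilde{K}'/\tilde{K}$, so $R(y,z)^k/\tilde{K}(y)^n$ has vanishing $y$-derivative, which gives $R(y,z)^k = \tilde{K}(y)^n\,M_0(z)$ for some $M_0(z)\in\mathbb{R}(z)$. Factoring $R$ into irreducibles in $\mathbb{R}[y,z]$, the relation $R^k = \tilde{K}(y)^n\,M_0(z)$ forces every irreducible factor of $R$ to lie in either $\mathbb{R}[y]$ (coming from $\tilde{K}$) or $\mathbb{R}[z]$ (coming from $M_0$). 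Therefore $R(y,z)$ splits as $L(y)\,M(z)$ for polynomials $L, M$, and substituting back gives the desired factorization $f(x,y,z) = P(K(x)\cdot L(y)\cdot M(z))$.

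The main obstacle will be Step~1, namely justifying the existence of the dual multiplicative expression in the asymmetric settings of Theorems~\ref{thm:ER3} and~\ref{thm:ER4}, where swapping $x$ and $y$ does not preserve the sizes of the cartesian product factors. This is precisely the same technical point that arises for the additive counterpart after Lemma~\ref{parallelcase4d}, and I would handle it by the same appeal to the swap-symmetry of the overall argument; granted that, the remaining steps are essentially a routine separation-of-variables calculation followed by a factorization argument in $\mathbb{R}[y,z]$.
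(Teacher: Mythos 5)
Your proposal is correct, and it follows the paper's high-level strategy (obtain the dual decomposition $f=\tilde P(\tilde K(y)\tilde R(x,z))$ by swapping $x$ and $y$, then compare quotients of partial derivatives of the two forms), but the execution of the final step is genuinely different. The paper compares $f_x/f_z$: from $f=P(K(x)R(y,z))$ this equals $K'(x)R/(K(x)R_z)$, while from the dual form it equals $R^*_x(x,z)/R^*_z(x,z)$, which is manifestly independent of $y$; hence $R_z/R=\partial_z\log R$ is independent of $y$, and a single integration gives $\log R=\lambda(y)+\mu(z)$, i.e.\ $R=L(y)M(z)$. Your choice of $f_x/f_y$ instead forces you to introduce the separation function $\phi(z)$, pin it down as the constant $n/k$ by comparing leading coefficients in $y$, and then run a unique-factorization argument on $R^k=\tilde K(y)^nM_0(z)$ in $\mathbb{R}[y,z]$ (after clearing the denominator of $M_0\in\mathbb{R}(z)$). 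All of these steps go through --- you should just note the degenerate case $R_y\equiv 0$, where $R$ already depends on $z$ alone and there is nothing to prove --- but the paper's choice of quotient buys a one-line integration in place of your factorization argument. The ``main obstacle'' you flag in Step~1 (justifying the dual multiplicative expression after swapping $x$ and $y$) is handled in the paper exactly as you propose, by appeal to the symmetry of the whole argument, so you are not missing anything the paper supplies.
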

\begin{proof}
By applying the above with the roles of $x$ and $y$ swapped, we can also write 
$f(x,y,z) = P^*(K^*(y)\cdot R^*(x,z))$.  Then we calculate the quotient 
$f_x/f_z$ for both forms,
$$\frac{f_x}{f_z} = \frac{K'(x)R(y,z)}{K(x)R_z(y,z)} = 
\frac{R^*_x(x,z)}{R^*_z(x,z)},$$
which tells us that $$\frac{R_z(y,z)}{R(y,z)} = \frac{\partial}{\partial 
z}\log(R(y,z))$$ is independent of $y$. Integrating we get that $\log(R(y,z)) = 
\lambda(y) + \mu(z)$, hence $$R(y,z) = e^{\lambda(y)}\cdot e^{\mu(z)} = 
L(y)M(z),$$ which also implies that $L(y)$ and $M(z)$ are polynomials, as 
desired.
\end{proof}
\noindent
This finishes the proof.


\section{Applications and Limitations} \label{sec:apps}

In this section we give some applications and limitations of the main results.  
We start by giving a simple condition to check whether a function has the 
required additive or multiplicative form required in the main results.  Then we 
give a proof of our variant of Purdy's conjecture.  Finally we give a 
construction using parabolas that shows that the exponents in 
Theorem~\ref{thm:ER4} cannot be improved significantly.

\subsection{How to check if a function is additive or multiplicative}\label{check}

Given a differentiable function $f(x,y):\mathbb{R}^2\to\mathbb{R}$, we define 
\[q_f(x,y) = \frac{\partial^2}{\partial x \partial y}\log\biggl[\frac{\partial 
f/\partial x}{\partial f/\partial y}\biggr].\]

Suppose $f$ is of the form $f(x,y)=p(k(x)+l(y))$ or $f(x,y)=p(k(x)l(y))$, where 
$p,k$ and $l$ are nonconstant.  Then one can check that \[q_f(x,y) = 0\] 
identically.

So, if we have a differentiable function $f:\mathbb{R}^2\to\mathbb{R}$, and 
$q_f$ is not identically zero, then we know that the function does not have the 
additive or multiplicative form.
The converse of this result also holds, although we do not need that fact here.  

A similar condition holds for functions $f$ of the form 
$f(x,y,z)=p(k(x)+l(y)+m(z))$ or $f(x,y,z)=p(k(x)l(y)m(z))$.  If we define 
\[q_f(x,y,z) = \frac{\partial^2}{\partial x \partial y}\log\biggl[\frac{\partial 
f/\partial x}{\partial f/\partial y}\biggr].\]  Then $q_f(x,y,z) = 0$.

Notice that with $f$ in the form above, \[\frac{\partial f/\partial x}{\partial 
f/\partial y}=\frac{k'(x)}{l'(y)}\quad\mathrm{or}\quad\frac{\partial f/\partial 
x}{\partial f/\partial 
y}=\frac{k'(x)l(y)m(z)}{k(x)l'(y)m(z)}=\frac{k'(x)l(y)}{k(x)l'(y)}\] is 
independent of $z$.  This provides another way of checking whether a function 
does not have the additive or multiplicative form.

Similar conditions could be checked using partial derivatives with respect to 
$z$.  If $f(x,y,z)=p(k(x)+l(y)+m(z))$ or $f(x,y,z)=p(k(x)l(y)m(z))$ we get 
\[r_f(x,z) = \frac{\partial^2}{\partial x \partial z}\log\biggl[\frac{\partial 
f/\partial x}{\partial f/\partial z}\biggr] = 0\] and \[s_f(y,z) = 
   \frac{\partial^2}{\partial y \partial z}\log\biggl[\frac{\partial f/\partial 
   y}{\partial f/\partial z}\biggr] = 0.\]

Note that in this case the converse does not hold.  In the example in 
Section~\ref{subsec:parab} below $q_f=0$, $r_f=0$ and $s_f=0$, but $f$ does not 
have the required decomposition.

\subsection{On a conjecture of Purdy} \label{subsec:purdy}
The following theorem was conjectured by G. Purdy in \cite{Bras06} and proved by Elekes 
and R\'onyai in \cite{Elek00}. 
We will use the notation $D(P,Q) = \{d(p,q) : p\in P, q\in Q\}$
for the set of distances between two point sets.
\begin{theorem}
For all $c$ there is an $n_0$ such that for $n>n_0$ the following holds
for any two lines $\ell_1$ and $\ell_2$ in $\mathbb{R}^2$ 
and sets $P_i$ of $n$ points on $\ell_i$. 

If $|D(P_1,P_2)|< cn$ then the two lines are parallel or orthogonal.
\end{theorem}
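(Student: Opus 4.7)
The plan is to encode the pairwise distances as values of a degree-two polynomial and then apply Theorem~\ref{thm:ER}. First I would parametrize $\ell_i = \{\mathbf{p}_i + t\mathbf{v}_i : t\in \mathbb{R}\}$ with unit direction vectors $\mathbf{v}_i$, so that $P_1$ and $P_2$ correspond to parameter sets $A,B\subset \mathbb{R}$ with $|A|=|B|=n$. Writing $\mathbf{w} = \mathbf{p}_1-\mathbf{p}_2$, $\gamma = \mathbf{v}_1\cdot\mathbf{v}_2 = \cos\theta$, $a = \mathbf{w}\cdot\mathbf{v}_1$, and $b = \mathbf{w}\cdot\mathbf{v}_2$, the squared-distance function
\[f(x,y) = |\mathbf{w}+x\mathbf{v}_1-y\mathbf{v}_2|^2 = x^2 + y^2 - 2\gamma xy + 2ax - 2by + |\mathbf{w}|^2\]
is a quadratic polynomial in $\mathbb{R}[x,y]$. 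Since squaring is injective on $[0,\infty)$, the image $f(A,B)$ is in bijection with $D(P_1,P_2)$, so $|f(A,B)| < cn$.

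Next I would set up an application of Theorem~\ref{thm:ER}. Let $m = \lceil \max(n,|f(A,B)|)\rceil$, so $m < cn + 1$. Pad $A$ and $B$ with arbitrary extra real numbers to size $m$, and choose $C\supseteq f(A,B)$ with $|C|=m$. Then $|A|=|B|=|C|=m$, and the graph $z=f(x,y)$ contains at least $n^2 \ge m^2/(c+1)^2$ points of $A\times B\times C$. Applying Theorem~\ref{thm:ER} with constant $1/(c+1)^2$ (note $m\to\infty$ as $n\to\infty$), we obtain either $f(x,y) = g(k(x)+l(y))$ or $f(x,y)=g(k(x)\cdot l(y))$ for some polynomials $g,k,l\in\mathbb{R}[t]$.

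Finally I would exploit the necessary condition from Section~\ref{check}: either form of $f$ forces $q_f = \partial_x\partial_y\log(f_x/f_y)$ to vanish identically. From $f_x = 2(x-\gamma y + a)$ and $f_y = 2(y - \gamma x - b)$ one computes
\[q_f(x,y) = \frac{\gamma}{(x-\gamma y+a)^2} - \frac{\gamma}{(y-\gamma x-b)^2}.\]
For $q_f \equiv 0$ either $\gamma = 0$, whence $\ell_1 \perp \ell_2$, or the two denominators agree as polynomials up to sign. Expanding $(x-\gamma y+a)^2 = \pm (y-\gamma x-b)^2$ and matching coefficients of $x^2$, $y^2$, $x$, and $y$ shows the only possibilities are $\gamma = 1$ with $a=b$, or $\gamma = -1$ with $a=-b$; both say $\mathbf{v}_1 = \pm\mathbf{v}_2$, i.e.\ $\ell_1\parallel \ell_2$.

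The main obstacle is really just the reduction to Elekes--R\'onyai when $|f(A,B)|$ lies strictly between $n$ and $cn$, where one cannot take $C = f(A,B)$ directly as the third factor of a size-$n$ product; the padding step above handles this uniformly. Once the algebraic form of $f$ is in hand, extracting the geometric dichotomy from $q_f \equiv 0$ is a short and explicit calculation because $f$ is a fixed quadratic.
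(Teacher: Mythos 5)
Your proposal is correct and follows essentially the same route as the paper's argument for this family of statements (the paper cites \cite{Elek00} for this theorem and proves the asymmetric variant just below it by the identical method): encode squared distances as a quadratic polynomial $f(x,y)$, observe that few distinct distances makes the graph $z=f(x,y)$ rich on a cartesian product, invoke the Elekes--R\'onyai theorem to get the additive or multiplicative form, and then use the criterion $q_f\equiv 0$ to force $\cos\theta\in\{0,\pm1\}$. Your version is slightly more careful than the paper's sketch in two harmless respects --- you use a general affine parameterization rather than the Law of Cosines (so the parallel case needs no separate treatment), and you explicitly pad $A$, $B$, $C$ to a common size $m$ so that the hypotheses of Theorem~\ref{thm:ER} are met literally --- but the substance is the same.
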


Using Theorem~\ref{thm:ERbest} (or Theorem~\ref{thm:ER2}) we can extend it to 
the asymmetric case when we have fewer points on one of the lines.  The proof is 
similar to that in \cite{Elek00}.
\begin{theorem}
For every $c>0$ and $\varepsilon>0$ there is an $n_0$ such that for $n>n_0$ the 
following holds
for any two lines $\ell_1$ and $\ell_2$ in $\mathbb{R}^2$,
$P_1$ a set of $n$ points on $\ell_1$,
and $P_2$ a set of $n^{1/2+\varepsilon}$ points on $\ell_2$.
  
If $|D(P_1,P_2)|< cn$ then the two lines are parallel or orthogonal.
\end{theorem}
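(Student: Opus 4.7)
The plan is to reduce the statement to Theorem~\ref{thm:ERbest} by encoding the squared distance between a point on $\ell_1$ and a point on $\ell_2$ as a degree-two polynomial $f(x,y)$ on a cartesian product, and then to translate the additive or multiplicative form of $f$ back into geometric information about the angle between the lines.

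First I would fix coordinates so that each line is parameterized by a single real variable. Let $A\subset\mathbb{R}$ be the set of parameter values of the $n$ points of $P_1$ and $B\subset\mathbb{R}$ the set of parameter values of the $n^{1/2+\varepsilon}$ points of $P_2$, so $|A|=n$ and $|B|=n^{1/2+\varepsilon}$. After choosing coordinates so that (when they meet) $\ell_1\cap\ell_2$ is the origin and $\ell_1$ is the $x$-axis, the squared distance between the point with parameter $x$ on $\ell_1$ and the point with parameter $y$ on $\ell_2$ is the quadratic polynomial
\[
f(x,y)=x^{2}-2xy\cos\theta+y^{2},
\]
where $\theta$ is the angle between the lines. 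If the lines are parallel at distance $h$, a similar parameterization gives $f(x,y)=(x-y)^{2}+h^{2}$, which is already of additive form $g(k(x)+l(y))$ with $g(t)=t^{2}+h^{2}$, $k(x)=x$, $l(y)=-y$, and the conclusion is immediate.

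In the non-parallel case I would apply Theorem~\ref{thm:ERbest} as follows. Every pair $(a,b)\in A\times B$ yields a point $(a,b,f(a,b))$ on the graph $z=f(x,y)$ whose last coordinate lies in $C_{0}:=D(P_1,P_2)^{2}$, and $|C_{0}|\le|D(P_1,P_2)|<cn$. Enlarging $C_{0}$ arbitrarily to a set $C\subset\mathbb{R}$ with $|C|=n$, the graph of $f$ contains the full $|A|\cdot|B|=n^{3/2+\varepsilon}$ points of $A\times B\times C$. For $n$ sufficiently large, Theorem~\ref{thm:ERbest} (applied with $d=2$ and the constant $1$ in place of $c$) then forces $f$ to be of the form $g(k(x)+l(y))$ or $g(k(x)l(y))$ for some $g,k,l\in\mathbb{R}[t]$.

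The main obstacle, and genuinely the only geometric step, is to deduce from this algebraic conclusion that $\cos\theta=0$. For this I would invoke the criterion from Section~\ref{check}: whenever $f$ has either of the two distinguished forms, the quantity
\[
q_{f}(x,y)=\frac{\partial^{2}}{\partial x\,\partial y}\log\frac{f_{x}}{f_{y}}
\]
vanishes identically. A direct computation with $f(x,y)=x^{2}-2xy\cos\theta+y^{2}$ gives
\[
q_{f}(x,y)=\frac{\cos\theta}{(x-y\cos\theta)^{2}}-\frac{\cos\theta}{(y-x\cos\theta)^{2}},
\]
which is identically zero only when $\cos\theta=0$, i.e.\ when $\ell_{1}\perp\ell_{2}$. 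Combined with the parallel case handled above, this proves that the two lines must be either parallel or orthogonal, finishing the proof.
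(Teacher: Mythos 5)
Your proof is correct and follows essentially the same route as the paper: parameterize the lines, encode the squared distances as the Law-of-Cosines quadratic, apply Theorem~\ref{thm:ERbest} to a suitably padded cartesian product $A\times B\times C$ with $C$ built from the squared distances, and exclude the additive and multiplicative forms via the $q_f$ criterion of Section~\ref{check}. One small point: $q_f$ also vanishes identically when $\cos\theta=\pm1$ (the paper's $\lambda=\pm1$), but since you split off the parallel case at the start this does not affect the argument.
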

\begin{proof}
Parameterize $l_1$ by $x_1$ and $l_2$ by $x_2$, 
and let $X_1$ and $X_2$ represent $P_1$ and $P_2$ in this parameterization.
Then the condition on the distances means by the Law of Cosines that 
the polynomial $f(x_1,x_2)=x_1^2+2\lambda x_1x_2 + x_2^2$ 
assumes $<cn$ values on $X_1\times X_2$.


Then $z = f(x_1,x_2)$ contains $>c'n^{3/2 +\varepsilon}$ points of the cartesian 
product $X_1\times X_2\times E$ where $E = \{a^2:a\in D(P_1,P_2)\}$.  By 
Theorem~\ref{thm:ERbest}, this implies that $f$ has the additive or 
multiplicative form.
Thus $q_f$, as defined in Section \ref{check}, should be identically zero.  
A quick calculation shows that this is only possible if $\lambda = -1,0,$ or $1$,
which means that the angle between the lines is $0$ or $\pi/2$.
Therefore the lines are parallel or concurrent.
\end{proof}

\subsection{Limits on the asymmetry of the cartesian product} 
\label{subsec:parab}

In this section we show that Theorem~\ref{thm:ER4} is near-optimal. 
We will use the notation $[a,b] = \{a, a+1, \ldots, b-1, b\}$.

Consider $$f(x,y,z) = x + (y-z)^2,$$
and let $A = D =  [1,k^2]$ and $B = C =  [1,k]$ for an even integer $k$.
If we set $n = k^2$, then $|A| = |D| = n$ and $|B| = |C| = n^{1/2}$.
We can think of the solid $w = f(x,y,z)$ as consisting of translates of the 
parabola $w = y^2$ from the $wy$-plane.

We have $x+(y-z)^2 \in D$ when (for instance) $$x\in [1,k^2/2],\quad y \in 
[1,k/2] \quad \text{and}\quad z\in [1,k/2].$$
Then the solid $w = f(x,y,z)$ contains at least 
$\frac{1}{8}k^4 = \frac{1}{8} n^2$ points
of $A\times B\times C\times D$.

But the function $f(x,y,z)=(y-z)^2+x$ does not have one of the forms $p(k(x)+l(y)+m(z))$ 
or $P(K(x)L(y)M(z))$.  Note that $q_f = 0$, $r_f = 0$ and $s_f = 0$, so we cannot 
use the method above to show that $f$ does not have the additive or 
multiplicative form.  Instead we consider a degree argument.

Suppose $f(x,y,z) = P(K(x)L(y)M(z))$.  
Since each of $P,K,L$ and $M$ must have degree at least one, 
we would have $\deg f\ge 3$, a contradiction.  
So $f$ does not have the multiplicative form.  

Now suppose that $f(x,y,z)=p(k(x)+l(y)+m(z))$.  
Then $p,k,l$ and $m$ have degree at least one and at most two.  
If $\deg p = 2$ then $\deg k = 1$, 
implying $f$ has a term of the form $cx^2$, which it doesn't.
If $\deg p = 1$, then $f$ couldn't contain the term $-2yz$.
So $f$ does not have the additive form either.

Therefore the graph of $w=f(x,y,z)=x+(y-z)^2$ contains many points of 
$A\times B\times C\times D$,
but $f$ cannot be written in the additive or multiplicative form.
Hence any extension of Theorem~\ref{thm:main} with $|B|=|C|$ 
would have to have $|B|=|C|\ge cn^{1/2}$ for some constant $c>0$.  
Theorem~\ref{thm:ER4} supposes $|B|=|C|=n^{1/2+\varepsilon}$ for some $\varepsilon>0$, 
so that condition cannot be improved significantly.

\bibliographystyle{plain}
\bibliography{references}
\end{document}